    \newtheorem{Lem}{Lemma}[section]
    \newtheorem{Lem-Def}{Lemma-Definition}[section]  
    \newtheorem{Prop}[Lem]{Proposition}
          \newtheorem*{Quest}{Question}   
    \newtheorem*{thm}{Theorem}   
    \newtheorem{Thm}[Lem]{Theorem}
    \newtheorem{Cor}[Lem]{Corollary}
\theoremstyle{definition}
    \newtheorem{Exa}[Lem]{Example}
\newcommand{\A}{\mathcal A}
\newcommand{\B}{\mathcal B}
\newcommand{\I}{\mathcal I}
\newcommand{\T}{\mathcal T}
\renewcommand{\S}{\mathcal S}
\renewcommand{\L}{\mathcal L}
\renewcommand{\P}{\mathcal P}
\renewcommand{\O}{\mathcal O}
\newcommand{\C}{\mathcal C}
\renewcommand{\:}{\colon}
\newcommand{\Diff}{\text{Diff}}
\newcommand{\ra}{\rightarrow}
\newcommand{\ol}{\overline}
\newcommand{\ze}{\mathbb{Z}}
\newcommand{\wed}{\wedge}
\newcommand{\wh}{\widehat}
\newcommand{\term}{\text{Term}}
\newcommand{\lra}{\longrightarrow}
\renewcommand{\:}{\colon}
\begin{document}

\title[The degree-2 Abel--Jacobi map for nodal curves - I]{The degree-2 Abel--Jacobi map for nodal curves - I}

\author{Marco Pacini}

\address{Marco Pacini, Universidade Federal Fluminense, Rua M. Braga, Niter\'oi (RJ) Brazil}

\email{pacini@impa.br, pacini@vm.uff.br}

 \thanks{The author was partially supported by CNPq, processo 300714/2010-6.}

\begin{abstract}
\noindent
Let $f\: \C\ra B$ be a regular local smoothing of a nodal curve.  In this paper, 
we find a modular description of the Abel--N\'eron map having values in Esteves's fine compactified Jacobian and extending the degree 2 Abel--Jacobi map of the generic fiber of $f$.  
\end{abstract}
 
\maketitle

\section{Introduction}

\noindent
Let $C$ be a smooth projective curve defined over an algebraically closed field $k$. Let $J_C$ be the Jacobian variety of $C$. For every positive integer $d$ and for every line bundle $\P$ of degree $d$ on $C$, the \emph{degree-$d$ Abel map of $C$} is the morphism $\alpha^d_\P\:  C^d\ra J_C$ associating to a $d$-tuple $(Q_1,\dots, Q_d)$ the isomorphism class of the degree-0 line bundle 
$\P\otimes\O_C(-\sum_{i=1}^d Q_i)$ on $C$. The \emph{degree-$d$ Abel--Jacobi map of $C$} is the morphism 
$\alpha^d_{\O_C(dP)}$, where $P$ is a point of $C$. 
A well-known result of Abel states that the fibers of the Abel map are projectivized complete linear series (up to the natural action of the $d$-th symmetric group). 

It is natural and useful to investigate how limit linear series degenerate when $C$ specializes to a singular curve. For example, the study of degenerations of limit linear series to singular curves 
  provided a proof of the celebrated Brill--Noether Theorem (see \cite{GH}). A systematic theory of limit linear series for curves of compact type was introduced by Eisenbud and Harris in \cite{EH}. 
  Significant progresses in describing limit canonical series were done by Esteves and Medeiros in \cite{EM} for nodal curves with two components.  
Recently, Osserman introduced in \cite{O} another construction for the basic theory of limit linear series  for curves of compact type. Nevertheless, a general theory of limit linear series for singular curves is still not available. 

The Abel's result suggests a possible new approach for the study of limit linear series on singular curves. The relationship between limit linear series and fibers of Abel maps has been explored in \cite{EO} for curves of compact type with two component. However, a systematic study of limit linear series through Abel maps for more complex types of curves should require the construction of degree-$d$ Abel maps for singular curves. We recall that degree-$d$ Abel maps have been constructed only in few cases: For integral curves in \cite{AK}; for stable curve and $d=1$ in \cite{CE};  for Gorenstein curves and $d=1$ in \cite{CCE}; for nodal curves with two components and two nodes and $d=2$ in \cite{Co}; for stable curves of compact type and any $d$ in \cite{CP}. 
The general problem is difficult and remains wide open, principally due to the combinatorial complexity of some issues, as we will explain further down.

To better understand the problem, we resort to families of curves.
More precisely, let $C$ be a nodal curve defined over an algebraically closed field $k$ and with irreducible components  $C_1,\dots, C_p$. 
 Let $f\: \C\ra B$ be a regular local smoothing of $C$, i.e. a family of curves where $\C$ is smooth and where $B$ be the spectrum of a Henselian DVR (discrete valuation ring) with residue field $k$ and  quotient field $K$, and such that $f$ has special fiber isomorphic to $C$ and smooth generic fiber $\C_K$. 
Let $\sigma\:  B\ra \C$ be a section of $f$ through the $B$-smooth locus of $\C$ such that $\sigma(Spec(k))$ is contained in $C_1$. Assume that $\mathcal E$ is a vector bundle on $\C$ of rank $r>0$ and degree $r(g-1)$, where $g$ is the genus of $C$. The vector bundle $\mathcal E$ is usually called \emph{a polarization on $\C/B$}. Consider  the compactified Jacobian $J^\sigma_{\mathcal E}$ constructed in \cite{E01} by Esteves, parametrizing degree-0 torsion-free rank 1 sheaves $\I$ on $\C/B$ such that 
$\I|_C$ is \emph{$C_1$-quasistable with respect to $\mathcal E$}. This means that $\I|_C$ satisfies certain numerical conditions depending on the dual graph of $C$. We recall that $J^\sigma_{\mathcal E}$ is a proper $B$-scheme. Let $\C^d_K$ be the product of $d$ copies of $\C_K$ over $B$ and, for every line bundle $\P$ of relative degree $d$ on $\C/B$, consider the degree-$d$ Abel map 
$$
\alpha^d_{\P,K}\: \C^d_K \longrightarrow  J^\sigma_{\mathcal E},
$$
  sending a $d$-tuple of points $(Q_{1,K},\dots, Q_{d,K})$ on $\C_K$ to $\P|_{\C_K}\otimes\O_{\C_K}(-\sum_{i=1}^d Q_{i,K})$.  It is worth to recall that other compactified Jacobians have been also employed as targets of Abel maps, for example the one constructed by Caporaso in \cite{C}.
  We can see $\alpha^d_{\P,K}$ as a rational map $\alpha^d_{\P,K}\: \C^d\dashrightarrow 
 J^\sigma_{\mathcal E}$, where $\C^d$ is the product of $d$ copies of $\C$ over $B$.
The problem of constructing a geometrically meaningful Abel map for $C$ turns into the problem of   describing a resolution of the rational map $\alpha^d_{\P,K}$  through a sequence of explicit blowups of $\C^d$.

Since Abel maps exist for any nodal curve only in degree 1 (at least for certain polarizations),  the natural next step is to consider the degree-2 case. In \cite{CEP}, the question whether or not it is possible to obtain a resolution of the rational map $\alpha^2_{\P, K}$ through a sequence of blowups along certain divisors of $\C^2$, is reduced to a series of combinatorial issues. In this paper and in \cite{P} we solve the posed combinatorial problems when $\mathcal E$ is the \emph{canonical polarization on $\C/B$} (see Section \ref{intro-sec}) and for $\P=\I_{\Sigma|\C}^{-2}$,
where $\I_{\Sigma|\C}$ is the ideal sheaf of $\Sigma:=\sigma(B)$,  i.e. for the degree-2 Abel--Jacobi map. The goal of the two papers is to prove that a resolution of the map $\alpha^2_{\I_{\Sigma|\C}^{-2},K}\:  \C^2\dashrightarrow J^\sigma_{\mathcal E}$ can be obtained by taking the blowup of $\C^2$ along products of subcurves of $C$ intersecting their complementary subcurves in 2 or 3 points.
 
A recent result of Busonero, Kass and Melo--Viviani (see \cite{B}, \cite{K}, \cite{MeVi}) shows that the N\'eron model of the Jacobian variety of the generic fiber of $f$ is isomorphic to the $B$-smooth locus of 
$J^\sigma_{\mathcal E}$. This is as an extension of a previous result of Caporaso on compactified Jacobians and N\'eron models (see \cite{CAJM}).  The N\'eron mapping property (see Section \ref{ext-sec}) implies a natural extension 
$$\alpha^d_{\P}\:  \dot{\C}^d\longrightarrow J^\sigma_{\mathcal E}$$
of the Abel map $\alpha^d_{\P,K}$, where $\dot{\C}^d$ is the $B$-smooth locus of $\C\times_B\C$. The morphism $\alpha^d_{\P}$ is known as \emph{Abel--N\'eron map}. Unfortunately, the definition of the Abel-N\'eron map $\alpha^d_{\P}$ is not explicit and its modular interpretation turns out to be a necessary step toward a geometrically meaningful resolution of the Abel map,
as we shall see in \cite{P}. Indeed, we recall that the degree-1 Abel map has been  constructed using the modularity of the Abel--N\'eron map in \cite[Theorem 4.6]{CE}. 
 To obtain a description of $\alpha^d_{\P}$ when $\mathcal E$ is the canonical polarization on $\C/B$ and $\P=\I_{\Sigma|\C}^{-2}$, we are naturally led to the following combinatorial question.

\begin{Quest}
Let $P$, $Q$ and $Q'$ be smooth points of $C$ contained respectively in $C_1$, $C_i$ and $C_j$, for some $(i, j)$ in $\{1,\dots,p\}^2$. Is it possible to find explicit integers $a_1,\dots,a_p$ such that the line bundle $$\O_C(2P-Q-Q')\otimes \O_\C\left(-\sum_{i=1}^p a_i C_i\right)|_C$$
on $C$ is $C_1$-quasistable?
\end{Quest}

The answer for the analogous question in degree 1 involves the construction of a set of nested  subcurves of $C$ intersecting their complementary in 1 points (see  \cite[Lemma 4.9]{CE}).
We answer the posed question by constructing  the \emph{set of nested tails of $C$ with respect to $(i,j)$}, consisting of certain subcurves of $C$ explicitly given in terms of $P,Q,Q'$. This set is easily computable, as Example \ref{Exa} clearly illustrates.

The main results of the paper are stated in the following theorem.
\begin{thm}
Let $C$ be a nodal curve defined over an algebraically closed field $k$, with irreducible components $C_1,\dots, C_p$. 
Let $f\: \C\ra B$ be a regular local smoothing of $C$, where $B$ is the spectrum of a Henselian DVR with residue field $k$.  Fix smooth points $P$, $Q$, and $Q'$ of $C$ contained respectively in  $C_1$, $C_i$ and $C_j$, where $(i,j)$ is in $\{1,\dots,p\}^2$. If $\T_{i,j}$ is the set of nested tails of $C$ with respect to $(i,j)$, then the  invertible sheaf
$$\O_C(2P-Q-Q')\otimes \O_\C\left(-\sum_{Z\in \T_{i,j}}Z\right)|_C$$
 on $C$ is $C_1$-quasistable. 
In particular, let $\sigma\:  B\ra \C$ be a section of $f$ through the $B$-smooth locus of $\C$ such that $\sigma(Spec(k))$ is contained in $C_1$ and set $\Sigma:=\sigma(B)$. If $\mathcal E$ is the canonical polarization on $\C/B$, then the Abel--N\'eron map $\alpha^2_{\I_{\Sigma|\C}^{-2}}\:  \dot{\C}^2\ra J^\sigma_{\mathcal E}$ is induced by the invertible sheaf $\dot\L$ on $\dot\C^2\times_B\C/\dot\C^2$ defined in  (\ref{Ldot}).
\end{thm}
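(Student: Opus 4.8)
The plan is to reduce the whole statement to the first assertion about $C_1$-quasistability, because the ``in particular'' clause is then a formal consequence of the N\'eron mapping property together with the fact (recalled in the excerpt) that the $B$-smooth locus of $J^\sigma_{\mathcal E}$ is the N\'eron model of the Jacobian of $\C_K$. Concretely, once we know that for every $(i,j)$ and every choice of smooth points $P\in C_1$, $Q\in C_i$, $Q'\in C_j$ the sheaf $\O_C(2P-Q-Q')\otimes\O_\C(-\sum_{Z\in\T_{i,j}}Z)|_C$ is $C_1$-quasistable, we obtain a $B$-flat family of torsion-free rank-1 sheaves on $\C^2\times_B\C/\C^2$ whose restriction to the generic fiber is $\I_{\Sigma|\C}^{-2}\otimes\O(-Q_1-Q_2)$ and whose restriction over each geometric point of the special fiber of $\dot\C^2$ is $C_1$-quasistable; this is exactly the invertible sheaf $\dot\L$ of $(\ref{Ldot})$. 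By Esteves's representability of $J^\sigma_{\mathcal E}$ and the universal property of the compactified Jacobian, $\dot\L$ induces a $B$-morphism $\dot\C^2\ra J^\sigma_{\mathcal E}$; since it agrees with $\alpha^2_{\I_{\Sigma|\C}^{-2},K}$ on the generic fiber and $\dot\C^2$ is $B$-smooth, the N\'eron mapping property forces it to coincide with the Abel--N\'eron map $\alpha^2_{\I_{\Sigma|\C}^{-2}}$. So the heart of the matter is the combinatorial/geometric quasistability statement.

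For that, the plan is to unwind the definition of $C_1$-quasistability with respect to the canonical polarization into its numerical form: for the multidegree $\underline e$ of $L:=\O_C(2P-Q-Q')\otimes\O_\C(-\sum_{Z\in\T_{i,j}}Z)|_C$ one must check, for every proper subcurve $Y\subsetneq C$, the inequality $|e_Y - (\deg \mathcal E_Y - \tfrac{\deg\mathcal E}{2g-2}(2g_Y-2+\#(Y\cap Y^c)))|\le \tfrac12\#(Y\cap Y^c)$, together with strict inequality (or equality on the correct side) in the boundary cases, and the ``$C_1$'' normalization handling the tie-breaking when $C_1\subseteq Y$ versus $C_1\subseteq Y^c$. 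Since $\mathcal E$ is the canonical polarization, the target value $\deg\mathcal E_Y-\ldots$ simplifies to $g_Y-1+\tfrac12\#(Y\cap Y^c)$ in a way that should make the required inequality read $-\#(Y\cap Y^c)\le e_Y-(g_Y-1)\le \#(Y\cap Y^c)$ with the appropriate strictness; I would write this out once and for all as a clean lemma. The contribution of $\O_C(2P-Q-Q')$ to $e_Y$ is simply $2[C_1\subseteq Y\text{ or }P\in Y]-[Q\in Y]-[Q'\in Y]$, an explicit integer in $\{-2,\dots,2\}$, while the contribution of $-\sum_{Z\in\T_{i,j}}Z$ to $e_Y$ is $-\sum_{Z\in\T_{i,j}}(\#(Z\cap Y^c)-\#(Z^c\cap Y))$, i.e. a signed count of edges of the dual graph between $Z$ and $Y$; the whole problem becomes: choose the tails $Z$ so that these edge-counts absorb the at most $2$ units of ``charge'' that $2P-Q-Q'$ has placed on the graph.

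So the real work is the construction and analysis of $\T_{i,j}$. Here is the line I would take. First, treat the genuinely two-dimensional input $P,Q,Q'$ by thinking of the divisor $2P-Q-Q'$ as a ``flow demand'' on the dual graph $\Gamma$ of $C$: $2$ units are supplied at the vertex $v_1$ (where $P$ lives) and $1$ unit is demanded at each of $v_i$ and $v_j$. A subcurve $Z$ whose complement contains $v_1$ and such that $\#(Z\cap Z^c)$ is small (ideally $1$, $2$, or $3$) is a ``tail''; subtracting $Z$ shifts multidegree by a unit of flow across the boundary $\partial Z$. The set of nested tails should be defined as a maximal chain $Z_1\supsetneq Z_2\supsetneq\cdots$ of such subcurves, each chosen greedily to reduce the remaining imbalance, with the nestedness guaranteeing that the edge-counts $\#(Z_a\cap Z_b^c)$ telescope cleanly. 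I expect the proof that the resulting multidegree is $C_1$-quasistable to split into cases according to where $Y$ sits relative to the chain $\{Z_a\}$ and relative to $\{v_1,v_i,v_j\}$ — there will be on the order of a dozen cases, each a short inequality once the bookkeeping is set up, but the boundary/equality cases (where the quasistability inequality is attained and one must invoke the $C_1$-rule to break the tie) are where errors hide.

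The step I expect to be the main obstacle is precisely verifying the quasistability \emph{inequality with the correct strictness in the extremal cases}, because this is where the combinatorics of $\T_{i,j}$ must be exactly right rather than merely approximately right: one needs that no proper subcurve $Y$ simultaneously achieves the bound $|\cdot|=\tfrac12\#(Y\cap Y^c)$ on the ``wrong'' side with respect to $C_1$. I would handle it by proving a separation lemma — roughly, that if $Y$ is extremal then $\partial Y$ and the chain of tails interact in a forced way (either $Y$ or $Y^c$ is comparable to some $Z_a$), and then checking the tie-break directly from the definition of $\T_{i,j}$ (which tails contain $C_1$ in their complement). A secondary obstacle is making the definition of $\T_{i,j}$ genuinely canonical and well-posed when the greedy choice is not unique (e.g.\ several candidate tails of the same boundary size reducing the imbalance equally); I would resolve this by ordering tails lexicographically via a fixed labeling of the components, and then noting that different admissible choices yield the same multidegree class, hence the same point of $J^\sigma_{\mathcal E}$, so the induced Abel--N\'eron map is independent of the choice.
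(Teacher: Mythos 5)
Your reduction of the ``in particular'' clause is essentially the paper's: once the fiberwise $C_1$-quasistability of $\O_C(2P-Q-Q')\otimes\O_{\T_{i,j}}$ is known, $\dot\L$ induces a $B$-morphism $\dot\C^2\ra J^\sigma_{\mathcal E}$ by the fine moduli property, and it must equal $\alpha^2_{\I_{\Sigma|\C}^{-2}}$ because the two agree on $\C_K\times_B\C_K$ (the paper invokes separatedness of $J^\sigma_{\mathcal E}$ rather than the N\'eron mapping property, but this is a cosmetic difference). That part is fine.

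The genuine gap is in the heart of the theorem, which you explicitly defer rather than prove. First, the statement is about the \emph{specific} set $\T_{i,j}=\T^1_i\sqcup\T^1_j\sqcup\T^2_{i,j}\sqcup\T^3_{i,j}$ constructed in Section \ref{nest-sec} (minimal 2-tails and $\T^2_{i,j}$-free 3-tails containing $C_i\cup C_j$ and avoiding $C_1$, obtained canonically by intersecting and iterating, with no choices to be made); your proposal instead sketches a different object, a greedy ``flow-reducing'' chain with lexicographic tie-breaking, and your remark that different admissible choices give the same multidegree is both unsubstantiated and beside the point, since quasistability must be verified for the set named in the statement. Second, for the quasistability itself you only announce a case analysis (``on the order of a dozen cases'') and identify the extremal/tie-breaking cases as ``the main obstacle'' without resolving them. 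This is precisely where all the work lies in the paper: one needs the reduction of Proposition \ref{quasistab} (it suffices to test $\T_{i,j}$-normalized tails, which in turn requires the connected-component and maximal-subcurve arguments and the estimate $\beta_L(\widehat W^c)=1/2$), and then the delicate bookkeeping of $t^+_Z$ and $t^-_Z$ in Cases 1--3 of the proof of Theorem \ref{main1}, which rests on Corollaries \ref{maxT2}--\ref{maxT3} and Lemmas \ref{res1}--\ref{res7} controlling how members of $\T^2_{i,j}$ and $\T^3_{i,j}$ can simultaneously be terminal for a given tail $Z$. None of these ingredients (nor substitutes for them) appear in your proposal, and without them the claimed bound $-k_Z/2\le\deg_ZL<k_Z/2$ in the extremal cases is exactly the unproven point. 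As a minor additional issue, your numerical form of semistability involving $g_Y-1$ does not match the normalization used here: for degree-0 sheaves and the canonical polarization the condition is simply $|\deg_ZL|\le k_Z/2$ with $\beta_L(Z)>0$ when $C_1\subseteq Z$.
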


\subsection{Notation and Terminology}\label{sec1.2}

We work over an algebraically closed field $k$.  
A \emph{curve} is a connected, projective and reduced scheme of dimension 1 over $k$. Let $C$ be a 
 nodal curve. The genus of $C$ is $g=1-\chi(\O_C)$. We denote by $\omega_C$ the dualizing sheaf of $C$. 
 We say that a subset $\Delta$ of the set of nodes of $C$ is a \emph{desconnecting subset} if the normalization of $C$ at the points of $\Delta$ is not connected. We say that a node $R$ of $C$ is a \emph{desconnecting node} if $\{R\}$ is a desconnecting subset. 
A \emph{subcurve} $Z$ of $C$ is a nonempty union of irreducible components of $C$ such that $Z\ne C$. If $Z$ is a subcurve of $C$, then its \emph{complementary subcurve} is $Z^c:=\ol{C\setminus Z}$. We call a point in $Z\wedge Z^c$ a \emph{terminal point of $Z$}, and we  set   $\term_Z:=Z\cap Z^c$ and $k_Z:=\#\term_Z$. Moreover, we set $\term_C=\term_\emptyset=\emptyset$. 

Let $Z$ and $Z'$ be subcurves of a nodal curve $C$.  
 We write $Z\lhd Z'$ if $Z\subsetneq Z'$ and $\term_Z\cap\term_{Z'}$ is empty. Moreover, we write $Z\wed Z'$ to denote the union of the irreducible components of $C$ contained in $Z\cap Z'$. Notice that 
$$(Z\wed Z')^c=Z^c\cup (Z')^c.$$
 If  $\term_Z\cap \term_{Z'}$ is nonempty, we say that the pair $(Z, Z')$ is \emph{terminal}, or that $Z$ is $Z'$-terminal, or that $Z'$ is $Z$-terminal.  Otherwise, we say that $(Z, Z')$ is \emph{free}.  If $\S$ is a set of subcurve of $C$, we say that $Z$ is $\S$-free if $(Z, W)$ is free, for every $W$ in $\S$. 
 We say that $(Z, Z')$ is \emph{perfect} if one of the following condition holds 
 $$Z\subseteq Z', \,\,\, Z'\subseteq Z, \,\,\, Z^c\subseteq Z', \,\,\,Z'\subseteq Z^c.$$
If $\S$ is a set of subcurves of $C$, we say that $Z$ is \emph{$\S$-normalized} if $(Z, W)$ is perfect, for every $Z$-terminal subcurve $W$ in $\S$. 
For every node $R$ of $C$, we let  $C_{R,1},C_{R,2}$ be the irreducible components of $C$ containing $R$.   If $\A$ and $\B$ are sets, we let $\Diff(\A,\B):=(\A\cup \B)\setminus(\A\cap \B)$; we denote by $\A\sqcup \B$ the disjoint union of $\A$ and $\B$.
If $I$ is a torsion-free rank 1  sheaf on $C$, its degree is $\deg I=\chi(I)-\chi(\O_C)$.

A \emph{family of nodal curves} is a proper and flat morphism 
$f\: \mathcal C\rightarrow B$ whose geometric fibers are nodal 
curves. We denote by $\omega_f$ the relative dualizing sheaf of the family. 
A \emph{local smoothing} of a nodal curve $C$ is a family of curves $f\: \C\ra B$, where $B$ is the spectrum of a Henselian DVR with residue field $k$ and quotient field $K$ and such that $f$ has special fiber isomorphic to $C$ and smooth generic fiber. 
A \emph{regular local smoothing} $f\: \C\ra B$ of $C$ is a local smoothing of $C$ with 
$\C$ smooth.

\section{Jacobians and Abel--Jacobi maps for nodal curves}\label{intro-sec}

\noindent
Let $C$ be a genus $g$ nodal curve with irreducible components $C_1,\dots, C_p$. Let $J_C$ be the Jacobian of $C$, a scheme parametrizing invertible sheaves of degree 0 on $C$. We have a natural decomposition 
$$J_C=\underset{d_1+\ldots+d_p=0}{\underset{(d_1,\dots,d_p)\in\ze^p}
{\coprod}}J^{(d_1,\dots,d_p)}_C,$$
where $J^{(d_1,\dots,d_p)}_C$ is a connected component of $J_C$ parametrizing invertible sheaves $I$ on $C$ such that $\deg_{C_i}I=d_i$, for $i$ in $\{1,\dots, p\}$.
In general, the scheme $J_C$ is neither of finite type, nor compact. To consider a manageable compactification of it, we resort to a semistability condition and to torsion-free rank 1 sheaves. 
Consider the vector bundle
$$E:=
\begin{cases}
\begin{array}{ll}
\O_C^{\oplus(2g-3)}\oplus \omega_X^{\otimes g-1} & \text{ if } g\ge 2 \\
\O_C & \text{ if } g=1 \\
\O_C\oplus \omega_C  & \text{ if } g=0 \\
\end{array}
\end{cases}$$
on $C$. The vector bundle $E$ is called the \emph{canonical polarization on $C$}.
Let $I$ be a degree-0 torsion-free rank 1 sheaf on $C$. 
 For every subcurve $Z$ of $C$, we say that $I$ is \emph{semistable with respect to $E$ at $Z$}, or simply \emph{(canonically) semistable at $Z$},  if  
$$
|\deg_Z I_Z|\le \frac{k_Z}{2},
$$
where $I_Z$ is the restiction of $I$ to $Z$ modulo torsion. 
Furthermore, for every subcurve $Z$ of $C$ and every component $C_i$ of $C$, we say that $I$ is \emph{$C_i$-quasistable with respect to $E$ at $Z$}, or simply \emph{(canonically) $C_i$-quasistable at $Z$}, if $I$ is semistable at $Z$ 
and, whenener $C_i\subseteq Z$, we have
$$\beta_{I}(Z):=\deg_Z I_Z+\frac{k_Z}{2}>0.$$
Notice that  $\beta_I(Z)\in\ze+\frac{1}{2}\ze$, for every subcurve $Z$ of $C$.
We say that $I$ is \emph{$C_i$-quasistable with respect to $E$}, or simply \emph{(canonically) $C_i$-quasistable}, if $I$ is $C_i$-quasistable at $Z$, for every subcurve $Z$ of $C$.    
 It follows from \cite[Theorem A]{E01} that there exists a scheme $J^{C_i}_C$, which is of finite type and proper, parametrizing the set of $C_i$-quasistable torsion-free rank 1 sheaves on $C$.
  We refer to \cite{E01} for more details (see also \cite[Section 2.3]{CCE}).   
Notice that, assuming that $I$ is invertible, it follows that $I$ is semistable at a subcurve $Z$ of $C$ if and only if $I$ is semistable at $Z^c$. Moreover, if $I$ is invertible and $C_i$-quasistable at the connected components of a subcurve $Z$ of $C$, then $I$  is $C_i$-quasistable at $Z$.  

The definitions extend to families of curves in a natural way. 
Let $f\: \C\ra B$ be a family of nodal curves.  Assume that there are sections $\sigma_1,\dots,\sigma_n\:  B\ra \C$ through the $B$-smooth locus of $\C$ such that, for every $b\in B$ and for every  irreducible component $X_b$ of $f^{-1}(b)$, we have $\sigma_i(b)\in X_b$, for some $i\in\{1,\dots, n\}$.  Notice that this condition is satisfied if $f$ is a regular local smoothing of a nodal curve (see \cite[Proposition 5 of Section 2.3]{BLR}). Let $\sigma\:  B\ra \C$ be a section of $f$ through the $B$-smooth locus of $\C$.  
 Consider the vector bundle 
$$\mathcal E:=
\begin{cases}
\begin{array}{ll}
 \O_\C^{\oplus(2g-3)}\oplus \omega_f^{\otimes g-1} & \text{ if } g\ge 2 \\
\O_\C & \text{ if } g=1 \\
\O_\C\oplus \omega_f  & \text{ if } g=0
\end{array}
\end{cases}$$
on $\C/B$. 
The vector bundle $\mathcal E$ is called the \emph{canonical polarization on $\C/B$}.
 We say that a  torsion-free rank 1 sheaf $\I$ on $\C$ is \emph{$\sigma$-quasistable with respect to $\mathcal E$}, or simply \emph{(canonically) $\sigma$-quasistable}, if $\I|_{f^{-1}(b)}$ is $X_b$-quasistable,  for every $b\in B$, where $X_b$ is the irreducible component of $f^{-1}(b)$ such that 
 $\sigma(b)\in X_b$. It follows from \cite[Theorems A and B]{E01} that there exists a scheme $J^\sigma_{\mathcal E}$ which finely represents the functor associating to a $B$-scheme $T$ the set of equivalence classes of $\sigma_T$-quasistable torsion-free rank 1 sheaves on $\C\times_B T/T$, where $\sigma_T\:  T\ra \C\times_B T$ is the pull-back of $\sigma$. Here, two torsion-free rank 1 sheaves $\I_1$ and $\I_2$ on $\C\times_B T/T$ are equivalent if there is an invertible sheaf $M$ on $T$ such that $\I_1\simeq \I_2\otimes p^*M$, where $p\:  \C\times_B T\ra T$ is the second projection. The scheme $J^\sigma_{\mathcal E}$ is of finite type and proper over $B$. 

Let $f\:  \C\ra B$ be a regular local smoothing of a nodal curve $C$, where $B$ is the spectrum of a Henselian DVR with quotient field $K$. Let $\mathcal E$ be the canonical polarization on $\C/B$. Let $\sigma\:  B\ra \C$ be a section through the $B$-smooth locus of $\C$.  
 Set  $\C^2:=\C\times_B\C$ and $\C^3:=\C^2\times_B\C$.   
 Denote by $\xi\:  \C^3\ra \C$ and $\rho_i\: \C^3\ra\C^2$ the projection onto the last factor and that onto the product over $B$ of the $i$-th and last factor, for each $i$ in $\{1,2\}$.  Let $\Delta\subset\C^2$ be the diagonal subscheme and, for each $i$ in $\{1,2\}$, put
 $$\Delta_i:=\rho_i^{-1}(\Delta)$$
and consider the ideal sheaf $\I_{\Delta_i|\C^3}$. Consider the ideal sheaf $\I_{\Sigma|\C}$, where $\Sigma:=\sigma(B)$. 
The degree 2 Abel--Jacobi map of the generic fiber $\C_K$ of $f$ is the morphism 
\begin{equation}\label{AJ-generic}
\alpha^2_{\I_{\Sigma|\C}^{-2},K}\:  \C_K\times_B \C_K\ra J^\sigma_{\mathcal E}
\end{equation}
induced by the  invertible sheaf $$(\xi^*\I_{\Sigma|\C}^{-2}\otimes\I_{\Delta_1|\C^3}\otimes\I_{\Delta_2|\C^3})|_{\C_K\times_B \C_K\times_B\C}$$
on the family $\rho\: \C_K\times_B \C_K\times_B\C\ra\C_K\times_B \C_K$, where $\rho$ is the projection onto the first and second factor.

\section{Tails of nodal curves}\label{sec3}

\noindent
In the literature, a tail of a nodal curve is a subcurve intersecting its complementary curve exactly at one point (see for example \cite[Definition 4.1]{CE}).  We need to generalize the notion of tail of a nodal curve as follows.

Let $Z$ be a subcurve of a nodal curve.  We say that $Z$ is a \emph{tail} if $Z$ and $Z^c$ are  connected.  For a positive integer $k$,  a \emph{k-tail} is a tail $Z$ such that $k_Z=k$.

\begin{Lem}\label{term-proof}
Let $Z$ and $Z'$ be subcurves of a nodal curve $C$. Then we have 
\begin{equation}\label{term}
\term_{Z\wed Z'}\cup \term_{Z\cup Z'}\subseteq\term_Z\cup \term_{Z'}.
\end{equation}
If $(Z, Z')$ is free, then $\term_{Z\wed Z'}\cap \term_{Z\cup Z'}$ is empty and the equality holds in~\eqref{term}.
\end{Lem}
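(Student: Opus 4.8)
The plan is to work entirely at the level of irreducible components, exploiting the combinatorial structure of subcurves. Recall that a terminal point of a subcurve $W$ is a node lying on both $W$ and $W^c$, so equivalently it is a node whose two branches belong to components assigned to opposite sides of the partition $\{W, W^c\}$. The key observation is that $(Z\wed Z')^c = Z^c\cup (Z')^c$ (recorded in the excerpt) and, dually, $(Z\cup Z')^c = Z^c\wed (Z')^c$. So a node $R$ with components $C_{R,1}, C_{R,2}$ is in $\term_{Z\wed Z'}$ precisely when one of $C_{R,1}, C_{R,2}$ lies in both $Z$ and $Z'$ while the other lies in $Z^c$ or in $(Z')^c$; and $R\in\term_{Z\cup Z'}$ precisely when one branch lies in $Z$ or in $Z'$ while the other lies in both $Z^c$ and $(Z')^c$.

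First I would prove the inclusion \eqref{term}. Take $R\in \term_{Z\wed Z'}$, say $C_{R,1}\subseteq Z\cap Z'$. Then $C_{R,2}\not\subseteq Z\cap Z'$, so either $C_{R,2}\subseteq Z^c$ or $C_{R,2}\subseteq (Z')^c$. In the first case $R$ separates a component of $Z$ from a component of $Z^c$, hence $R\in\term_Z$; in the second case likewise $R\in\term_{Z'}$. Either way $R\in\term_Z\cup\term_{Z'}$. The case $R\in\term_{Z\cup Z'}$ is the same argument applied to the complements, using $(Z\cup Z')^c=Z^c\wed (Z')^c$ and $\term_W=\term_{W^c}$ when everything is phrased componentwise (a node is terminal for $W$ iff it is terminal for $W^c$). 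This gives $\term_{Z\wed Z'}\cup\term_{Z\cup Z'}\subseteq \term_Z\cup\term_{Z'}$.

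Next, assume $(Z,Z')$ is free, i.e.\ $\term_Z\cap\term_{Z'}=\emptyset$. I would first check that $\term_{Z\wed Z'}\cap\term_{Z\cup Z'}=\emptyset$: if a node $R$ lay in both, then on one hand one branch is in $Z\cap Z'$ and the other is in $Z^c\cup (Z')^c$; on the other hand one branch is in $Z\cup Z'$ and the other is in $Z^c\cap (Z')^c$. Tracing which branch is which forces $R\in\term_Z\cap\term_{Z'}$, contradicting freeness. Then for the equality in \eqref{term}: I must show every $R\in\term_Z\cup\term_{Z'}$ lies in $\term_{Z\wed Z'}\cup\term_{Z\cup Z'}$. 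Say $R\in\term_Z$, with $C_{R,1}\subseteq Z$, $C_{R,2}\subseteq Z^c$. By freeness $R\notin\term_{Z'}$, so $C_{R,1}$ and $C_{R,2}$ lie on the same side of $\{Z',(Z')^c\}$. If both are in $Z'$, then $C_{R,1}\subseteq Z\cap Z'$ and $C_{R,2}\subseteq Z^c$, so $R\in\term_{Z\wed Z'}$. If both are in $(Z')^c$, then $C_{R,2}\subseteq Z^c\cap (Z')^c$ and $C_{R,1}\subseteq Z\subseteq Z\cup Z'$, so $R\in\term_{Z\cup Z'}$. The case $R\in\term_{Z'}$ is symmetric. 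I do not anticipate a serious obstacle here; the only mildly delicate point is bookkeeping the correspondence between "terminal point of $W$" and the componentwise branch conditions, together with the possibility that $Z\wed Z'$ or $Z\cup Z'$ is empty or equals $C$ (in which case its $\term$ is empty by the conventions $\term_C=\term_\emptyset=\emptyset$), so that the stated inclusions still hold trivially in those degenerate cases.
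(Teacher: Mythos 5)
Your proposal is correct and follows essentially the same argument as the paper: a branch-by-branch case analysis on the components $C_{R,1},C_{R,2}$ at each node, using the identities $(Z\wed Z')^c=Z^c\cup(Z')^c$ and $(Z\cup Z')^c=Z^c\wed(Z')^c$, with freeness forcing both branches to one side of $\{Z',(Z')^c\}$ for the equality and the disjointness. The only difference is cosmetic (your explicit remark on the degenerate cases $Z\wed Z'=\emptyset$ or $Z\cup Z'=C$, which the paper's conventions handle silently).
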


\begin{proof}
 Let $R$ be a terminal point of $Z\wed Z'$, with $C_{R,1}$ contained in $Z\wed Z'$ and $C_{R,2}$  in $(Z\wed Z')^c=Z^c\cup (Z')^c$.  If $C_{R,2}$ is contained in $Z^c$ (respectively in $(Z')^c)$, then $R$ is a terminal point of $Z$ (respectively of $Z'$). Similarly, any terminal point of $Z\cup Z'$ is a terminal point of either 
 $Z$ or $Z'$. The proof of~\eqref{term} is complete.

Suppose that $(Z, Z')$ is free and, by contradiction, that there is a terminal point $R$ of both 
$Z\wed Z'$ and $Z\cup Z'$, with 
$C_{R,1}$ contained in $Z\wed Z'$. Since $R$ is in $\term_{Z\cup Z'}$ and $C_{R,1}\subseteq Z\cup Z'$, it follows that $C_{R,2}\subseteq (Z\cup Z')^c=Z^c\wedge (Z')^c$, and hence $R$ is a terminal point of both $Z$ and $Z'$, which contradicts the fact that $(Z, Z')$ is free.

Suppose $(Z, Z')$ free and $R$ a terminal point of $Z$, with $C_{R,1}$ contained in $Z$ and 
$C_{R,2}$ in $Z^c$. Since $(Z, Z')$ is free, we have two possibilities: either $C_{R,1}\cup C_{R,2}\subseteq Z'$, and hence $R$ is a terminal point of $Z\wed Z'$,  or 
 $C_{R,1}\cup C_{R,2}\subseteq (Z')^c$, and hence $R$ is a terminal point of $Z\cup Z'$.  Similarly, any terminal point of $Z'$ is a terminal point of either $Z\wedge Z'$ or $Z\cup Z'$, and hence the other inclusion in~\eqref{term} holds.
\end{proof}

\begin{Lem}\label{term-lem}
Let $Z$ be a tail and $W, W'$ be subcurves of a nodal curve $C$ and set $\A:=\term_W$, $\B:=\term_{W'}$. 
The following properties hold
\begin{enumerate} [(i)]
\item  \label{term-lem(i)}
if $W^c$ is contained in $W'$, then $\term_{W\wed W'}$ is equal to $\Diff (\A,\B)$;
\item  \label{term-lem(ii)}
if  $W'$ is contained in $W^c$, then $\term_{W\cup W'}$ is equal to $\Diff (\A,\B)$;
\item  \label{term-lem(iii)}
if  $Z$ is contained in $W\wed W'$, then $\term_Z\cap (\A\cup \B)$ is contained $\term_{W\wed W'}$;
\item  \label{term-lem(iv)}
if  $W\cup W'$ is contained in $Z$, then $\term_Z\cap (\A\cup \B)$ is contained in $\term_{W\cup W'}$;
\item  \label{term-lem(v)}
if $W$ is contained in $Z$ and $W'$ in $Z^c$, then $\term_{W\cup W'}$ is equal to $\Diff(\A,\B)$.
\end{enumerate}
\end{Lem}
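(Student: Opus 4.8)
Here is how I would approach proving Lemma~\ref{term-lem}.

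The plan is to reduce all of (i)--(v) to one local computation at the nodes of $C$, together with two formal complementation identities. For any subcurve $Y$ of $C$ every irreducible component of $C$ lies in exactly one of $Y$ and $Y^c$, so a node $R$ lies in $\term_Y=Y\cap Y^c$ if and only if $R$ lies on two \emph{distinct} components $C_{R,1}\ne C_{R,2}$ with exactly one of them contained in $Y$; in particular a self-node lies in no $\term_Y$, and $\term_Y=\term_{Y^c}$. I will also use $(W\wed W')^c=W^c\cup(W')^c$ (recorded above) and, since a component lies in $(W\cup W')^c$ exactly when it lies in neither $W$ nor $W'$, i.e.\ in both $W^c$ and $(W')^c$, the dual identity $(W\cup W')^c=W^c\wed(W')^c$; throughout, the convention $\term_C=\term_\emptyset=\emptyset$ absorbs the degenerate cases $W\cup W'=C$ and $W\wed W'=\emptyset$.

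The key step I would isolate first is the claim: \emph{if $A$ and $A'$ are subcurves of $C$ with no common irreducible component, then $\term_{A\cup A'}=\Diff(\term_A,\term_{A'})$}. To prove it, fix a node $R$; since a self-node lies in none of the three sets, assume $R$ lies on distinct components $C_{R,1},C_{R,2}$ and set $S=\{h\in\{1,2\}: C_{R,h}\subseteq A\}$, $S'=\{h\in\{1,2\}: C_{R,h}\subseteq A'\}$. The hypothesis gives $S\cap S'=\emptyset$, and irreducibility gives $C_{R,h}\subseteq A\cup A'\iff h\in S\cup S'$; hence $R\in\term_{A\cup A'}$ iff $|S\cup S'|=1$, i.e.\ $|S|+|S'|=1$, i.e.\ $R$ lies in exactly one of $\term_A,\term_{A'}$, i.e.\ $R\in\Diff(\term_A,\term_{A'})$. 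Now (ii), (v) and (i) follow: in (ii) the hypothesis $W'\subseteq W^c$ says precisely that $W,W'$ share no component, so the claim gives $\term_{W\cup W'}=\Diff(\A,\B)$; in (v) the inclusions $W\subseteq Z$, $W'\subseteq Z^c$ force the same (a common component would lie in $Z\cap Z^c$), so again $\term_{W\cup W'}=\Diff(\A,\B)$; and in (i) the hypothesis $W^c\subseteq W'$ is equivalent to $W\cup W'=C$, equivalently to ``$W^c$ and $(W')^c$ share no component'', so applying the claim to $(W^c,(W')^c)$ and using $(W\wed W')^c=W^c\cup(W')^c$ together with $\term_{W^c}=\term_W$, $\term_{(W')^c}=\term_{W'}$ gives $\term_{W\wed W'}=\Diff(\A,\B)$.

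For (iii) I would argue at the level of a single node. Let $R\in\term_Z\cap(\A\cup\B)$. Then $R$ lies on distinct components, say $C_{R,1}\subseteq Z$ and $C_{R,2}\subseteq Z^c$; since $Z\subseteq W\wed W'$, every component of $Z$ lies in both $W$ and $W'$, so $C_{R,1}\subseteq W$ and $C_{R,1}\subseteq W'$. As $W\wed W'$ is symmetric in $W$ and $W'$, assume $R\in\term_W=\A$; with $C_{R,1}\subseteq W$ this forces $C_{R,2}\not\subseteq W$, hence $C_{R,2}\not\subseteq W\wed W'$ because $W\wed W'\subseteq W$. Thus exactly one component of $R$ lies in $W\wed W'$, i.e.\ $R\in\term_{W\wed W'}$. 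Then (iv) follows from (iii) by complementation: apply (iii) with $(Z,W,W')$ replaced by $(Z^c,W^c,(W')^c)$ (note $Z^c$ is again a tail); the hypothesis $W\cup W'\subseteq Z$ becomes $Z^c\subseteq W^c\wed(W')^c=(W\cup W')^c$, and the conclusion $\term_{Z^c}\cap(\term_{W^c}\cup\term_{(W')^c})\subseteq\term_{W^c\wed(W')^c}$ reads $\term_Z\cap(\A\cup\B)\subseteq\term_{W\cup W'}$.

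I do not expect a genuine obstacle here: the entire content is the local claim above plus the two complementation identities, and the ``hard part'' is merely keeping the bookkeeping honest --- in particular handling the degenerate situations where some union of subcurves equals $C$ (absorbed by $\term_C=\emptyset$ after checking the right-hand side also vanishes) and not confusing the dictionary $\term_{Y^c}=\term_Y$, $(W\wed W')^c=W^c\cup(W')^c$, $(W\cup W')^c=W^c\wed(W')^c$. It is worth noting that the hypothesis that $Z$ is a tail is not actually used in proving (i)--(v).
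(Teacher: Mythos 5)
Your proof is correct, and it is the same kind of argument as the paper's (node-by-node component bookkeeping plus passing to complements for (ii) and (iv)), but organized around a different pivot. The paper proves (i) and (v) by two separate double-inclusion arguments, each invoking the inclusion \eqref{term} of Lemma \ref{term-proof} for one direction and a case analysis on $C_{R,1},C_{R,2}$ for the converse, and then obtains (ii) and (iv) from (i) and (iii) by complementation. You instead isolate a single statement --- if $A$ and $A'$ share no irreducible component then $\term_{A\cup A'}=\Diff(\term_A,\term_{A'})$ --- proved by the local count $|S|+|S'|=|S\cup S'|$ at each node, and this one claim yields (ii) and (v) directly and (i) after applying it to $(W^c,(W')^c)$ via $(W\wed W')^c=W^c\cup(W')^c$ and $\term_{Y^c}=\term_Y$; only (iii) needs a separate (short) local argument, with (iv) by complementation as in the paper. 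What your route buys is economy and uniformity: the three ``equality'' items become instances of one lemma, the degenerate cases ($W\cup W'=C$, $W\wed W'=\emptyset$) are handled once by the convention $\term_C=\term_\emptyset=\emptyset$, self-nodes are dealt with explicitly, and you correctly observe that the hypothesis that $Z$ is a tail is never used --- a fact also implicit in the paper's proof, which only uses $Z\wed Z^c=\emptyset$ in (v) and terminality of $R$ in (iii). The paper's version, by contrast, keeps the statement and proof aligned with the inclusion \eqref{term}, which it reuses repeatedly in later sections.
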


\begin{proof}
Set $X:=W\wed W'$ and $X':=W\cup W'$.
The statement is clear if $X$ is empty or $X'=C$, thus we may assume $X$ nonempty and 
$X'$ different from $C$. The items (ii) and (iv) follow by items (i) and (iii), by taking complementary subcurves.

We show (i). Suppose $W^c\subseteq W'$. Let 
$R$ be a terminal point of $X$, with $C_{R,1}$ contained in $X$ and $C_{R,2}$ in $X^c=W^c\cup (W')^c$. It follows from (\ref{term}) that $R$ is in $\A\cup \B$. Moreover, either $C_{R,2}\subseteq W^c\subseteq W'$, and hence $R$ is not in $\B$, or $C_{R,2}\subseteq (W')^c\subseteq W$, and hence $R$ is not in $\A$. Conversely, let $R$ be in $\A\setminus\B$, with $C_{R,1}$ contained in $W$ and $C_{R,2}$  in $W^c$. Notice that $C_{R,2}$ is contained in $W'\wed X^c$. Since $R$ is not in $\B$,  it follows that $C_{R,1}\subseteq W'$, and hence $C_{R,1}\subseteq W\wedge W'=X$. Since $C_{R,1}$ is contained in $X$ and $C_{R,2}$ in $X^c$,  we see that $R$ is a terminal point of $X$.  Similarly, we have that $\B\setminus\A$ is contained in $\term_X$. 

We show (iii). Suppose $Z\subseteq X$. Let $R$ be a terminal point of  $Z$ and $W$, with $C_{R,1}$ contained in $Z$ and $C_{R,2}$ in $Z^c$. 
Notice that $C_{R,2}\subseteq X^c$, otherwise $C_{R,2}\subseteq W$, and hence $R$ would be  not a terminal point of $W$, a contradiction. Since $C_{R,1}$ is contained in $X$ and $C_{R,2}$ in $X^c$, it follows that $R$ is a terminal point of $X$. One can show similarly that the intersection of 
  $\term_Z$ and $\B$ is contained in $\term_X$. 

We show (v). Suppose $W\subseteq Z$ and $W'\subseteq Z^c$, and consider a terminal point 
$R$ of $X'$, with $C_{R,1}$ contained in $X'$ and $C_{R,2}$ in $(X')^c=W^c\wed (W')^c$.  It follows from (\ref{term}) that $R$ is in $\A\cup \B$. Nevertheless, the node $R$ is not in $\A\cap \B$, otherwise $C_{R,1}$ would be contained in $W\wed W'\subseteq Z\wed Z^c$, which is empty,   a contradiction.  Conversely,  let
 $R$ be in  $\A\setminus \B$, with $C_{R,1}$ contained in $W$ and $C_{R,2}$ in $W^c$.  Since $W\wedge W'$ is empty, we have $C_{R,1}\subseteq (W')^c$; since $R$ is not in $\B$, we see that $C_{R,2}\subseteq (W')^c$. Therefore,  
 $C_{R,1}\subseteq  W$ and $C_{R,2}\subseteq W^c\wed (W')^c$, and we conclude that $R$ is a terminal point of $X'$. One can show similarly that $\B\setminus \A$ is contained in $X'$. 
\end{proof}

\begin{Lem}\label{connect}
 Let $Z$ and $Z'$ be  tails  of a nodal curve $C$ such that $k_Z>1$ and $k_{Z'}>1$.  Then the following properties hold
 \begin{enumerate} [(i)]
\item  \label{connect(i)}
 if  $k_{Z\wed Z'}$ is in $\{1,2,3\}$, then $k_{Z\wedge Z'}>1$ and $Z\wed Z'$ is a tail;
 \item \label{connect(ii)}
 if $k_{Z\cup Z'}$ is in $\{1,2,3\}$, then $k_{Z\cup Z'}>1$ and $Z\cup Z'$ is a tail;
 \item \label{connect(iii)}
  if $(Z, Z')$ is free, $k_{Z\wed Z'}\ge1$, $k_{Z\cup Z'}\ge1$ and $(k_Z,k_{Z'})=(2, 3)$, then $k_{Z\wed Z'}>1$, $k_{Z\cup Z'}>1$, $k_{Z\wed Z'}+k_{Z\cup Z'}=5$ and $Z\wed Z'$ and $Z\cup Z'$ are tails.
  \end{enumerate}
\end{Lem}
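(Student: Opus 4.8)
The plan is to work with the dual graph of $C$, in which $Z$, $Z^c$, $Z'$, $(Z')^c$ all correspond to connected subgraphs, and to prove each part by a short counting argument, using part (i) as the engine for parts (ii) and (iii). So set $A:=Z\wed Z'$; it is a nonempty proper subcurve of both $Z$ and $Z'$, with $A^c=Z^c\cup(Z')^c$. If $A=Z$ (i.e.\ $Z\subseteq Z'$) or $A=Z'$, part (i) is immediate, since then $A$ is one of the tails $Z,Z'$ and $k_A$ equals $k_Z$ or $k_{Z'}$, hence is $>1$. Otherwise the subcurves $W:=Z\wed(Z')^c$ and $W':=Z^c\wed Z'$ are both nonempty; the components of $C$ inside $Z$ split into those of $A$ and those of $W$, and the components inside $Z'$ split into those of $A$ and those of $W'$. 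Since $Z$ is connected and no node of $C$ joins two distinct connected components of $A$, each connected component of $A$ must be joined to $W$ by at least one node; this yields at least $c$ terminal nodes of $A$, where $c$ is the number of connected components of $A$. The same argument inside $Z'$ yields at least $c$ further terminal nodes of $A$, joined to $W'$; these are distinct from the first family because $W$ and $W'$ have no common component. Hence $k_A\ge 2c\ge2$, and since $k_A\le3$ this forces $c=1$ and $k_A\in\{2,3\}$.

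It remains (for part (i)) to see that $A^c=Z^c\cup(Z')^c$ is connected. As a union of two connected nonempty subcurves it is connected whenever $Z^c\cap(Z')^c\ne\emptyset$, so suppose $Z^c\cap(Z')^c=\emptyset$. Then every component of $Z^c$ lies in $Z'$ (else it would lie in $Z^c\cap(Z')^c$), symmetrically every component of $(Z')^c$ lies in $Z$, and no node joins $Z^c$ to $(Z')^c$ (such a node would lie in $Z^c\cap(Z')^c$). Consequently every terminal node of $Z^c$ joins $Z^c$ to a component of $Z$ that lies in $Z'$, i.e.\ to $A$, so it is a terminal node of $A$; the same holds for $(Z')^c$, and these two sets of nodes are disjoint. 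Therefore $k_A\ge k_{Z^c}+k_{(Z')^c}=k_Z+k_{Z'}\ge4$, contradicting $k_A\le3$. Hence $Z^c\cap(Z')^c\ne\emptyset$, $A^c$ is connected, and $A=Z\wed Z'$ is a tail; this proves part (i).

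Part (ii) is obtained by applying part (i) to the pair $(Z^c,(Z')^c)$: these are tails with $k_{Z^c}=k_Z>1$ and $k_{(Z')^c}=k_{Z'}>1$, one has $Z^c\wed(Z')^c=(Z\cup Z')^c$, and a subcurve is a tail exactly when its complement is. For part (iii), freeness of $(Z,Z')$ together with Lemma~\ref{term-proof} gives that $\term_{Z\wed Z'}$ and $\term_{Z\cup Z'}$ are disjoint with union $\term_Z\cup\term_{Z'}$, while freeness also gives $\term_Z\cap\term_{Z'}=\emptyset$; hence $k_{Z\wed Z'}+k_{Z\cup Z'}=k_Z+k_{Z'}=5$. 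With $k_{Z\wed Z'}\ge1$ and $k_{Z\cup Z'}\ge1$, the only possibilities are $(k_{Z\wed Z'},k_{Z\cup Z'})\in\{(1,4),(2,3),(3,2),(4,1)\}$. If $k_{Z\wed Z'}=4$ then $k_{Z\cup Z'}=1$, and part (ii) forces $k_{Z\cup Z'}>1$, a contradiction; so $k_{Z\wed Z'}\in\{1,2,3\}$, and part (i) gives $k_{Z\wed Z'}\in\{2,3\}$ with $Z\wed Z'$ a tail. Then $k_{Z\cup Z'}=5-k_{Z\wed Z'}\in\{2,3\}$, and part (ii) gives $k_{Z\cup Z'}>1$ with $Z\cup Z'$ a tail; the identity $k_{Z\wed Z'}+k_{Z\cup Z'}=5$ is the one already established.

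The step I expect to be the main obstacle is the connectivity half of part (i) — showing, from $k_{Z\wed Z'}\le3$ and the tail hypotheses alone, that both $Z\wed Z'$ and its complement are connected. The arithmetic lower bound on $k_{Z\wed Z'}$ is easy; the connectivity forces one to isolate the degenerate configurations $Z\subseteq Z'$ and $Z'\subseteq Z$ and, in the remaining case, to pin down exactly how the terminal nodes of $Z^c$ and of $(Z')^c$ sit inside $\term_{Z\wed Z'}$ when $Z^c\cap(Z')^c=\emptyset$, so as to reach the contradiction $k_{Z\wed Z'}\ge k_Z+k_{Z'}$.
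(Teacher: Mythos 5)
Your proof is correct, but the heart of it — part (i) — runs along a different track than the paper's. The paper first rules out $k_{Z\wed Z'}=1$ and non-tail-ness in one stroke: if $Z\wed Z'$ or its complement were disconnected, $\term_{Z\wed Z'}$ would split into two nonempty disconnecting subsets, and since $k_{Z\wed Z'}\le 3$ one of them is a single node lying (by Lemma \ref{term-proof}) in $\term_Z\cup\term_{Z'}$; but a tail with more than one terminal point has no disconnecting terminal node, a contradiction. You never invoke disconnecting nodes at all: you treat the two failure modes separately by bare-hands counting, showing first that connectedness of $Z$ and of $Z'$ forces each of the $c$ connected components of $Z\wed Z'$ to meet both $Z\wed(Z')^c$ and $Z^c\wed Z'$, so $k_{Z\wed Z'}\ge 2c$, and second that disconnectedness of $Z^c\cup(Z')^c$ would force $\term_{Z^c}\sqcup\term_{(Z')^c}\subseteq\term_{Z\wed Z'}$, hence $k_{Z\wed Z'}\ge k_Z+k_{Z'}\ge 4$ (after disposing of the degenerate inclusions via an argument close in spirit to Lemma \ref{free-perf}(i)). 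Your part (ii) is obtained by the clean duality $Z^c\wed(Z')^c=(Z\cup Z')^c$ rather than the paper's ``similar argument,'' and your part (iii) (Lemma \ref{term-proof} giving $k_{Z\wed Z'}+k_{Z\cup Z'}=5$, then feeding back into (i) and (ii)) coincides with the paper's, with a slightly more explicit case check excluding $(4,1)$. The paper's route is shorter once the disconnecting-node observation is made and handles both connectivity failures uniformly; yours is more self-contained and makes explicit where each terminal node comes from, at the cost of a case split and a somewhat longer count.
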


\begin{proof}
We prove (i). 
Suppose $k_{Z\wed Z'}$ is in $\{1,2,3\}$. 
Notice that  $k_{Z\wed Z'}>1$,  otherwise, using (\ref{term}), a terminal point of either 
$Z$ or $Z'$ would be a desconnecting node, which is not possible because $Z$ and $Z'$ are tails. By contradiction, assume that $Z\wed Z'$ is not a tail. Then there is  a partition of $\term_{Z\wed Z'}$  into nonempty proper subsets $U$ and $V$ which are  desconnecting sets of nodes of $C$.  
Since $k_{Z\wed Z'}\le 3$, one between $U$ and $V$ has cardinality one.
We know by (\ref{term}) that $U\cup V$ is contained in $\term _Z\cup \term_{Z'}$, hence either $Z$ or $Z'$ has a desconnecting node as terminal point, and this is a contradiction. The proof of (ii) is similar. 

  Suppose now that the hypotheses of (iii) hold. 
  Arguing as in the proof of (i), we have $k_{Z\wed Z'}>1$ and $k_{Z\cup Z'}>1$. Moreover, it follows from  Lemma \ref{term-proof} that the intersection of $\term_{Z\wed Z'}$ and $\term_{Z\cup Z'}$ is empty and 
   $$\#(\term_{Z\wed Z'}\cup\term_{Z\cup Z'})=\#(\term_Z\cup \term_{Z'})=5,$$
   where the second equality holds because $(Z, Z')$ is free. 
 We get $k_{Z\wed Z'}+k_{Z\cup Z'}=5$, then 
$k_{Z\wed Z'}$ and $k_{Z\cup Z'}$ are in $\{2,3\}$; moreover, $Z\wed Z'$ and $Z\cup Z'$ are tails by (i).
\end{proof}

\begin{Lem}\label{free-perf}
Let $Z$ be a tail of a nodal curve $C$. The following properties hold
 \begin{enumerate} [(i)]
\item  \label{free-perf(i)}
if $\term_Z\subset Z'$, for some tail $Z'$ of $C$, then either $Z\subseteq Z'$, or $Z^c\subseteq Z'$;
 \item \label{free-perf(ii)}
if $\#(\term_Z\cap\term_{Z'})=k_Z-1$,  for some tail $Z'$ of $C$, then $(Z,Z')$ is perfect;
 \item \label{free-perf(iii)}
 if $k_Z\ge 2$, then $(Z, Z')$ is free, for every 1-tail $Z'$ of $C$.
\end{enumerate}
\end{Lem}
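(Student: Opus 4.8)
The plan is to prove (i) by a direct connectedness argument --- this is the only real work --- and then to deduce (ii) from (i) by a short case analysis and to prove (iii) on its own using the disconnecting node coming from the $1$-tail. The delicate point in (i) will be that a node both of whose branches lie in $(Z')^c$ is genuinely not a point of the subcurve $Z'$, for which one uses that a node of a nodal curve lies on exactly the (at most) two components carrying its branches, together with the fact that $Z'$ being a tail forces $(Z')^c$ to be connected.

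For (i), assume $\term_Z\subseteq Z'$ and recall that $(Z')^c$ is connected because $Z'$ is a tail. I would first check that $(Z')^c$ cannot simultaneously contain an irreducible component of $Z$ and an irreducible component of $Z^c$: if it did, a chain of components inside $(Z')^c$ joining the two would cross a node $R$ one of whose branches lies on a component of $Z$ and the other on a component of $Z^c$, so $R\in\term_Z$; but the two components through $R$ both lie in $(Z')^c$, whence $R\notin Z'$, contradicting $\term_Z\subseteq Z'$. As $(Z')^c$ is nonempty, all of its irreducible components therefore lie in $Z$, or all of them lie in $Z^c$. In the first case no component of $Z^c$ lies in $(Z')^c$, i.e. $Z^c\subseteq Z'$; in the second case $Z\subseteq Z'$.

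For (ii), since $\#\term_Z=k_Z$ the hypothesis says that $\term_Z\setminus\term_{Z'}$ is a single node $R_0$, while every node of $\term_Z\cap\term_{Z'}$ lies in $Z'$, being a terminal point of $Z'$. The node $R_0$ is not in $\term_{Z'}=Z'\cap(Z')^c$, so it lies in exactly one of $Z'$, $(Z')^c$, and then both of its branches lie in that same subcurve. If both branches of $R_0$ lie in $Z'$, then $\term_Z\subseteq Z'$ and (i) gives $Z\subseteq Z'$ or $Z^c\subseteq Z'$, so $(Z,Z')$ is perfect. If both branches of $R_0$ lie in $(Z')^c$, then $\term_Z\subseteq(Z')^c$, and since $(Z')^c$ is again a tail, (i) applied to $(Z')^c$ gives $Z\subseteq(Z')^c$ or $Z^c\subseteq(Z')^c$, that is $Z'\subseteq Z^c$ or $Z'\subseteq Z$; again $(Z,Z')$ is perfect.

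For (iii), write $\term_{Z'}=\{R\}$. Since $Z'$ is a $1$-tail, $R$ is a disconnecting node, and the normalization of $C$ at $R$ is a disjoint union of two connected curves $C^{(1)}$ and $C^{(2)}$ whose irreducible components partition those of $C$ and which meet in $C$ only at $R$. Suppose $(Z,Z')$ is not free; then $\term_Z\cap\{R\}\ne\emptyset$, so $R\in\term_Z$, and after relabelling we may assume the component of $C$ through $R$ lying in $Z$ belongs to $C^{(1)}$ and the one lying in $Z^c$ belongs to $C^{(2)}$. Because $Z$ is connected and meets $C^{(1)}$, if $Z$ contained a component of $C^{(2)}$ then a connecting chain inside $Z$ would have to use the node $R$, forcing the $C^{(2)}$-component through $R$ into $Z$, which is impossible; hence $Z\subseteq C^{(1)}$, and symmetrically $Z^c\subseteq C^{(2)}$. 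Then $\term_Z=\{R\}$, so $k_Z=1$, contradicting $k_Z\ge2$; thus $(Z,Z')$ is free.
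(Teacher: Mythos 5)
Your proof is correct and follows essentially the same route as the paper's: part (i) is the same connectedness argument for $(Z')^c$ (the paper phrases it via the disjoint decomposition $(Z')^c=((Z')^c\wed Z)\cup((Z')^c\wed Z^c)$ rather than a chain of components), part (ii) is deduced from (i) through the same dichotomy $\term_Z\subset Z'$ or $\term_Z\subset (Z')^c$, and part (iii) rests on the same observation that the terminal point of a $1$-tail is a disconnecting node, which cannot occur as a proper subset of $\term_Z$ when $Z$ is a tail with $k_Z\ge 2$. No gaps.
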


\begin{proof}
We prove (i). Suppose $\term_Z\subset Z'$, where $Z'$ is a tail. Write $(Z')^c=W_1\cup W_2$, where $W_1:=(Z')^c\wed Z$ and $W_2:=(Z')^c\wed Z^c$. Notice that $W_1$ and $W_2$ are subcurves of $(Z')^c$ with no common components, hence  $W_1\cap W_2$ is contained in $(Z')^c\setminus Z'$. On the other hand, we have $W_1\cap W_2\subseteq Z\cap Z^c=\term_Z\subset Z'$. We conclude that $W_1\cap W_2$ is empty.    By contradiction, assume that $Z$ is not contained in $Z'$ and that $Z^c$ is not contained in $Z'$. It follows that $W_1$ and $W_2$ are nonempty, and hence $(Z')^c$ is not connected, and this is a contradiction because $Z'$ is a tail.

We now prove (ii) and (iii). If $\#(\term_Z\cap\term_{Z'})=k_Z-1$, then either $\term_Z\subset Z'$, or $\term_Z\subset (Z')^c$, and hence $(Z, Z')$ is perfect by the first part of the proof. 
If $k_Z\ge 2$, then any proper subset of $\term_Z$ is not a desconnecting set, because $Z$ is a tail, and hence  $(Z, Z')$ is free, for every 1-tail $Z'$ of $C$.
\end{proof}

\section{Sets of nested tails}\label{nest-sec}

\noindent
Let $f\: \C\ra B$ be a regular local smoothing of a nodal curve $C$ with irreducible components $C_1,\dots, C_p$.  Let $P$, $Q$, and $Q'$ be smooth points of $C$, with $P$ in $C_1$. We want to find explicit integers $a_1,\dots, a_p$ such that the invertible sheaf 
$$\O_C(2P-Q-Q')\otimes \O_\C\left(-\sum_{i=1}^pa_iC_i\right)$$
on $C$ is $C_1$-quasistable. Recall that a similar result for the line bundle $\O_C(Q)$, where $Q$ is a smooth point of $C$, has been obtained via sets of 1-tails of $C$ in \cite[Lemma 4.9]{CE}. To determine the integers in terms of the points $P$, $Q$, and $Q'$, we need to introduce certain sets of $k$-tails of $C$, for $k$ in $\{1,2,3\}$.

 Let $C$ be a nodal curve with irreducible components $C_1,\dots,C_p$. Fix positive integers $r$ and $s$, and an $r$-tuple $(i_1,\dots,i_r)$ in $\{1,\dots,p\}^r$. We say that a set $\T$ is a \emph{set of nested $s$-tails of $C$ with respect to $(i_1, \dots, i_r)$} if 
$$\T=\{W_0,\dots,W_m\},$$
where $m$ is a non-negative integer and $W_0,\dots,W_m$ are $s$-tails of $C$ satisfying the following conditions
\begin{enumerate}
\item 
if $t$ is in $\{0,\dots, m\}$, then $\cup_{u=1}^r C_{i_u}$ and $C_1$ are contained respectively in $W_t$ and $W_t^c$;
\item 
if $m\ge 1$ and  $t$ is in $\{0,\dots,m-1\}$, then we have $W_t\lhd W_{t+1}$.
\end{enumerate}

Fix $i$ in $\{1,\dots,p\}$. Consider the following set of 1-tails
$$\T^1_i:=\{Z: k_Z=1, C_i\subseteq Z \text{ and }C_1\subseteq Z^c\}.$$
By \cite[Lemma 4.3]{CE}, $\T^1_i$ is a sets of nested 1-tails of $C$ with respect to $(i)$. 

\begin{Prop}\label{2tail}
Let $Z$ and $Z'$ be 2-tails of a nodal curve $C$ such that $k_{Z\wed Z'}\ge 1$ and $k_{Z\cup Z'}\ge1$. Then $Z\wed Z'$ and $Z\cup Z'$ are 2-tails of $C$.
\end{Prop}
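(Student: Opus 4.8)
The plan is to distinguish cases according to whether the pair $(Z,Z')$ is free or terminal, and in each case first to bound $k_{Z\wed Z'}$ and $k_{Z\cup Z'}$ from above and then to appeal to Lemma~\ref{connect} to upgrade these bounds to the value $2$ and to the tail property. Suppose first that $(Z,Z')$ is free. Then $\term_Z\cap\term_{Z'}=\emptyset$, so $\#(\term_Z\cup\term_{Z'})=k_Z+k_{Z'}=4$; by the last assertion of Lemma~\ref{term-proof} the sets $\term_{Z\wed Z'}$ and $\term_{Z\cup Z'}$ are disjoint with union $\term_Z\cup\term_{Z'}$, so $k_{Z\wed Z'}+k_{Z\cup Z'}=4$. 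Together with the standing hypotheses $k_{Z\wed Z'}\ge 1$ and $k_{Z\cup Z'}\ge 1$, this forces both integers into $\{1,2,3\}$, whence parts~(i) and~(ii) of Lemma~\ref{connect} give that $Z\wed Z'$ and $Z\cup Z'$ are tails with $k>1$; being at least $2$ and summing to $4$, both equal $2$.

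Next suppose that $(Z,Z')$ is terminal. Since $k_Z=k_{Z'}=2$, the set $\term_Z\cap\term_{Z'}$ has either $2$ or $1$ elements. If it has $2$ elements, then $\term_Z=\term_{Z'}$, so by Lemma~\ref{term-proof} the set $\term_{Z\wed Z'}\cup\term_{Z\cup Z'}$ has at most $2$ elements; hence $k_{Z\wed Z'},k_{Z\cup Z'}\in\{1,2\}$, and parts~(i) and~(ii) of Lemma~\ref{connect} again give that both are tails with $1<k\le 2$, that is, $2$-tails. If instead $\#(\term_Z\cap\term_{Z'})=1=k_Z-1$, then $(Z,Z')$ is perfect by part~(ii) of Lemma~\ref{free-perf}, so one of the inclusions $Z\subseteq Z'$, $Z'\subseteq Z$, $Z^c\subseteq Z'$, $Z'\subseteq Z^c$ holds. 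Now $Z^c\subseteq Z'$ would give $Z\cup Z'=C$ and hence $k_{Z\cup Z'}=0$, while $Z'\subseteq Z^c$ would give $Z\wed Z'=\emptyset$ and hence $k_{Z\wed Z'}=0$, both contradicting the hypotheses; so either $Z\subseteq Z'$ or $Z'\subseteq Z$, and in either case $\{Z\wed Z',\,Z\cup Z'\}=\{Z,Z'\}$, which consists of $2$-tails by assumption.

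I do not expect a genuine obstacle here: the entire content is the bookkeeping of terminal points, and the key mechanism in every branch is that the hypotheses $k_{Z\wed Z'}\ge 1$ and $k_{Z\cup Z'}\ge 1$ simultaneously exclude the degenerate configurations ($Z\wed Z'=\emptyset$ or $Z\cup Z'=C$) and, once combined with the connectedness statements of Lemma~\ref{connect}, squeeze $k_{Z\wed Z'}$ and $k_{Z\cup Z'}$ down to exactly $2$. The only point requiring a little care is verifying, in the perfect sub-case, that the two "wrong" inclusions really do collapse $Z\wed Z'$ or $Z\cup Z'$ and thus cannot occur under the hypotheses.
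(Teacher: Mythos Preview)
Your proof is correct. The approach differs from the paper's in its case decomposition: you split on whether $(Z,Z')$ is free or terminal, whereas the paper splits on whether $\term_Z$ is contained in $Z'$ or in $(Z')^c$ (and symmetrically for $\term_{Z'}$). In the paper's reduction, once those containments fail one is left with an explicitly labelled configuration $\term_Z=\{R,S\}$, $\term_{Z'}=\{R',S'\}$ in which one checks by hand that $\term_{Z\wed Z'}\subseteq\{S,S'\}$ and $\term_{Z\cup Z'}\subseteq\{R,R'\}$ before invoking Lemma~\ref{connect}. Your argument replaces this hand computation by the equality clause of Lemma~\ref{term-proof} in the free case, yielding the clean identity $k_{Z\wed Z'}+k_{Z\cup Z'}=4$, and then handles the terminal case via Lemma~\ref{free-perf}(\ref{free-perf(ii)}) (for one shared terminal point) or the inclusion~\eqref{term} (for two). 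The two routes buy essentially the same thing, but yours is slightly more structural and avoids naming individual terminal points; the paper's is more concrete and makes the location of each terminal point of $Z\wed Z'$ and $Z\cup Z'$ visible.
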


\begin{proof}
Suppose $\term_Z\subset Z'$. It follows from item (i) of Lemma \ref{free-perf} that either
 $Z\subseteq Z'$, or $Z^c\subseteq Z'$. If $Z\subseteq Z'$, then $Z\wedge Z'=Z$ and $Z\cup Z'=Z'$, and  we are done. If $Z^c\subseteq Z'$, then $Z\cup Z'=C$, a contradiction. We can argue similarly if one of the following conditions holds: $\term_{Z'}\subset Z$, $\term_Z\subset (Z')^c$, $\term_{Z'}\subset Z^c$.

By the first part of the proof, we may assume that $\term_Z$ is equal to $\{R,S\}$, with $R$ and $S$ not contained respectively in $Z'$ and $(Z')^c$, and that $\term_{Z'}$ is equal to $\{R',S'\}$, with $R'$ and $S'$ not contained respectively in $Z$ and $Z^c$. 
As a consequence, the intersection $\{R,R'\}\cap (Z\wed Z')$ is empty, and  
$C_{S,1}\cup C_{S,2}$ and $C_{S',1}\cup C_{S',2}$ are contained respectively in $Z'$ and $Z$,  hence we deduce that 
$$\{R,R'\}\cap\term_{Z\wed Z'}=\{S,S'\}\cap \term_{Z\cup Z'}=\emptyset.$$ 
It follows from (\ref{term}) that $\term_{Z\wed Z'}$ is contained in $\{S,S'\}$ and $\term_{Z\cup Z'}$ in $\{R,R'\}$, and items (i) and (ii) of Lemma \ref{connect} implies that $Z\wedge Z'$ and $Z\cup Z'$ are 2 tails.
   \end{proof}

 Fix $(i,j)$ in $\{1,\dots,p\}^2$. Consider the following set of 2-tails 
$$\S^2_{i,j}:=\{Z: Z \text{ is a 2-tail of $C$ such that }  C_i\cup C_j\subseteq Z \text{ and }C_1\subseteq Z^c\}.$$

 If $\S^2_{i,j}$ is nonempty, set $W^2_0:=\wedge_{Z\in \S^2_{i,j}} Z$.  
It follows from Proposition \ref{2tail} that $W^2_0$ is in $\S^2_{i,j}$.
For every positive integer $m$, define inductively 
$$
 \S^2_{i,j,m}:=\{Z : Z\in \S^2_{i,j} \text{ and } W^2_{m-1}\lhd Z\}
$$
and, if $\S^2_{i,j,m}$ is nonempty, we let
$$W^2_m:=\wed_{Z\in \S^2_{i,j,m}} Z.
$$
It follows from (\ref{term}) that if $Z,Z',Z''$ are subcurve of $C$ such that $Z\lhd Z'$ and $Z\lhd Z''$, then $Z\lhd Z'\wed Z''$. Hence, Proposition \ref{2tail} implies that 
$W^2_m$ is in $\S^2_{i,j,m}$, for every positive integer $m$ such that $\S^2_{i,j,m}$ is nonempty. 
Let $M$ be the maximum positive integer such that $\S^2_{i,j,M}$ is nonempty and consider the set
$$\T^2_{i,j}:=\{W^2_0,\dots, W^2_{M}\}.$$
We call $\T^2_{i,j}$ \emph{the set of nested 2-tail of $C$ with respect to $(i, j)$}.

\begin{Cor}\label{maxT2}
Let  $C$ be a nodal curve with irreducible components $C_1,\dots C_p$. Let $\T^2_{i,j}$ be the set of nested 2-tails of $C$ with respect to $(i, j)$, where $(i,j)$ is in $\{1,\dots,p\}^2$.  If $Z$ is a 2-tail of $C$ such that $C_i\cup C_j$ and $C_1$ are contained respectively in  $Z$ and $Z^c$, then there is a $Z$-terminal tail in $\T^2_{i,j}$ which is contained in $Z$.
\end{Cor}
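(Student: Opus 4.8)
\emph{Proof sketch.} The plan is to exploit the maximality built into the construction of $\T^2_{i,j}=\{W^2_0,\dots,W^2_M\}$, together with the fact — immediate from the construction and Proposition \ref{2tail} — that every member of $\T^2_{i,j}$ is a $2$-tail of $C$. First I would record that $W^2_0\subseteq Z$: by hypothesis $Z$ is a $2$-tail with $C_i\cup C_j\subseteq Z$ and $C_1\subseteq Z^c$, hence $Z\in\S^2_{i,j}$, and since $W^2_0=\wed_{Z'\in\S^2_{i,j}}Z'$ is contained in every member of $\S^2_{i,j}$, in particular $W^2_0\subseteq Z$.

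Next, let $t_0$ be the largest index in $\{0,\dots,M\}$ for which $W^2_{t_0}\subseteq Z$; this is well-defined by the previous step. I claim $W^2_{t_0}$ is the tail we want, i.e.\ that $(W^2_{t_0},Z)$ is terminal. Suppose not, so $(W^2_{t_0},Z)$ is free. If $W^2_{t_0}=Z$, then $\term_{W^2_{t_0}}\cap\term_Z=\term_Z\ne\emptyset$ because $k_Z=2$, contradicting freeness. Hence $W^2_{t_0}\subsetneq Z$, and freeness then forces $W^2_{t_0}\lhd Z$. But then $Z\in\S^2_{i,j,t_0+1}$ by definition; in particular $\S^2_{i,j,t_0+1}\ne\emptyset$, so $t_0+1\le M$, and $W^2_{t_0+1}=\wed_{Z'\in\S^2_{i,j,t_0+1}}Z'\subseteq Z$ since $Z$ is one of the $Z'$ in the meet. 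This contradicts the maximality of $t_0$, proving the claim.

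Therefore $(W^2_{t_0},Z)$ is terminal, and $W^2_{t_0}\in\T^2_{i,j}$ is a $2$-tail contained in $Z$, so the statement follows. The only delicate point is organizing the maximality argument correctly: rather than following the chain $W^2_0\lhd W^2_1\lhd\cdots$ until it leaves $Z$, one isolates the last $W^2_{t_0}$ still contained in $Z$ and rules out that it sits freely and properly inside $Z$; the separate, easy treatment of the degenerate case $W^2_{t_0}=Z$ is what makes the argument run uniformly (and also covers the boundary case $t_0=M$, where $\S^2_{i,j,t_0+1}=\emptyset$ by maximality of $M$).
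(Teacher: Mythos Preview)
Your proof is correct and follows essentially the same approach as the paper's own proof: observe that $W^2_0\subseteq Z$, pick the maximal $W^2_m$ in $\T^2_{i,j}$ contained in $Z$, and argue by contradiction that if $(W^2_m,Z)$ were free then $W^2_m\lhd Z$ would force $W^2_{m+1}\subseteq Z$. Your version is simply more explicit about the degenerate case $W^2_{t_0}=Z$ and the boundary case $t_0=M$, which the paper leaves implicit.
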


\begin{proof}
By the definition of $\T^2_{i,j}$, the tail $W^2_0$ of $\T^2_{i,j}$  is contained in $Z$. The result simply follows by observing that, if $W^2_m$ is  the maximal tail of $\T^2_{i,j}$ contained in $Z$ for $m\ge0$, then $W^2_m$ is $Z$-terminal. Indeed, if the pair $(W^2_m, Z)$ were free, then  $W^2_m\lhd Z$, and hence $W^2_m\lhd W^2_{m+1}\subseteq Z$, contradicting the maximality  of $W^2_m$.
\end{proof}

In the sequel, we will define sets of nested 3-tails of $C$. The following example shows that  it is necessary to introduce an additional condition to get the result stated in Proposition \ref{2tail} for 3-tails.

\begin{Exa}
It is possible that $k_{Z\wedge Z'}=2$ and $k_{Z\cup Z'}=4$, for 3-tails $Z$ and $Z'$ of a nodal curve $C$. For example, let $C=C_1\cup C_2\cup C_3\cup C_4$, where $\#C_1\cap C_4=\#C_2\cap C_3=0$, and $\#C_1\cap C_i=2$ and $\#C_i\cap C_4=1$, for $i\in\{2,3\}$. Then $Z=C_2\cup C_4$ and $Z'=C_3\cup C_4$ are 3-tails of $C$ such that $k_{Z\wedge Z'}=k_{C_4}=2$ and $k_{Z\cup Z'}=k_{C_1}=4$. 
Notice that $\T^2_{4,4}=\{C_4\}$. Indeed, the crucial fact here is that  $Z$ and $Z'$ are the 3-tails of $C$ containing $C_4$ and not containing $C_1$, and both tails are $\T^2_{4,4}$-terminal.
\end{Exa}

\begin{Lem}\label{3tail}
Let $Z$ and $Z'$ be 3-tails of a nodal curve $C$ such that $k_{Z\wed Z'}\ge 1$ and $k_{Z\cup Z'}\ge 1$. Then the following properties hold
 \begin{enumerate} [(i)]
\item  \label{3tail(i)}
if $\#(\term_{Z'}\cap Z)=2$, then $k_{Z\cup Z'}$ is in $\{2,3\}$.
\item  \label{3tail(ii)}
if $\#(\term_{Z'}\cap Z)=\#(\term_{Z}\cap Z')=1$, then $k_{Z\wed Z'}=2$.
\end{enumerate}
\end{Lem}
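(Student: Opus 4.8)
The plan is to treat the two items essentially in parallel, exploiting the description of terminal points from Lemma \ref{term-proof} together with the tail hypotheses, which forbid proper desconnecting subsets of $\term_Z$ and $\term_{Z'}$. First I would fix notation: write $\term_Z=\{R_1,R_2,R_3\}$ and $\term_{Z'}=\{R_1',R_2',R_3'\}$, and for each node decide on which side of $Z$ (resp.\ $Z'$) its two branches lie. The key preliminary observation is the dichotomy already used in the proof of Proposition \ref{2tail}: for a node $R\in\term_Z$, either both branches of $R$ lie in $Z'$, or both lie in $(Z')^c$, or $R\in\term_{Z'}$; similarly with the roles of $Z$ and $Z'$ swapped. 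So the hypothesis $\#(\term_{Z'}\cap Z)=2$ in (i) says that two of the $R_i'$, say $R_1',R_2'$, have both branches inside $Z$, while the third, $R_3'$, either lies in $\term_Z$ or has both branches in $Z^c$; and analogously the hypothesis of (ii) pins down that exactly one node of each terminal set has both branches in the ``large'' part of the other curve.

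For item (i): I would argue that $R_1',R_2'\notin\term_{Z\cup Z'}$, since each has both branches in $Z\subseteq Z\cup Z'$. By the inclusion $\term_{Z\cup Z'}\subseteq\term_Z\cup\term_{Z'}$ from \eqref{term}, every terminal point of $Z\cup Z'$ then lies in $\term_Z\cup\{R_3'\}$, a set of at most $4$ nodes (or $3$, if $R_3'\in\term_Z$). This already bounds $k_{Z\cup Z'}\le 4$. To rule out $k_{Z\cup Z'}=4$ — and to get $k_{Z\cup Z'}\ge 2$, which is where the tail hypotheses enter — I would invoke Lemma \ref{connect}(ii): since $k_{Z\cup Z'}\in\{1,2,3\}$ would force $k_{Z\cup Z'}>1$ and $Z\cup Z'$ a tail, the only thing left to exclude is $k_{Z\cup Z'}=4$. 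Here is the main obstacle: $k_{Z\cup Z'}=4$ is a priori possible (Example \ref{Exa} shows exactly this phenomenon!), so I must use the extra hypothesis $\#(\term_{Z'}\cap Z)=2$ to break it. Concretely, if $k_{Z\cup Z'}=4$ then $\term_{Z\cup Z'}=\{R_1,R_2,R_3,R_3'\}$ with $R_3'\notin\term_Z$; I would then show $\term_{Z\wed Z'}$ must be a small set and derive a desconnecting proper subset of $\term_Z$ or $\term_{Z'}$, contradicting that $Z,Z'$ are tails — the precise count being that $\term_{Z\wed Z'}\cup\term_{Z\cup Z'}\subseteq\term_Z\cup\term_{Z'}$ has at most $6$ elements, of which $R_1',R_2'$ lie in neither $\term_{Z\cup Z'}$ nor (having both branches in $Z$) necessarily in $\term_{Z\wed Z'}$, forcing $\term_{Z\wed Z'}\subseteq\{R_1,R_2,R_3,R_3'\}$ and then a partition argument as in Lemma \ref{connect}.

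For item (ii): with $\#(\term_{Z'}\cap Z)=\#(\term_Z\cap Z')=1$, say $R_1'$ has both branches in $Z$ and $R_1$ has both branches in $Z'$, I would first note $R_1'\notin\term_{Z\cup Z'}$ and $R_1\notin\term_{Z\cup Z'}$, while $R_1'$ and $R_1$ each have both branches in $Z\wed Z'$? — no, that need not hold, so more care is needed: $R_1$ has both branches in $Z'$, and both in $Z$ would be needed for $R_1\in Z\wed Z'$ in the interior sense, but $R_1\in\term_Z$, so $R_1$ straddles $Z$ and $Z^c$; hence $R_1\in\term_{Z\wed Z'}$ provided its $Z$-branch lies in $W\wed W'$ appropriately. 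I would instead run the cleaner count: $\term_{Z\wed Z'}\cup\term_{Z\cup Z'}\subseteq\{R_1,R_2,R_3,R_1',R_2',R_3'\}$, the two nodes $R_1',R_1$ with a full branch-pair inside $Z$ resp.\ $Z'$ contribute to $\term_{Z\wed Z'}$ only, and of the remaining four nodes at least some go to $\term_{Z\cup Z'}$; combined with $k_{Z\wed Z'}\ge 1$, $k_{Z\cup Z'}\ge 1$ and Lemma \ref{connect}(i)--(ii) forcing each of $k_{Z\wed Z'},k_{Z\cup Z'}$ to be $>1$ once it is in $\{1,2,3\}$, a short case check on how the two remaining ``straddling'' nodes $R_2',R_3',R_2,R_3$ distribute yields $k_{Z\wed Z'}=2$ exactly. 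Throughout, the recurring tool is that no proper nonempty subset of a $3$-tail's terminal set can be desconnecting, which is precisely what converts the crude inequalities from \eqref{term} into the exact value $2$; I expect the bookkeeping of which branch of each node sits where to be the only genuinely delicate point, and I would organize it by a single table of the six nodes against the four subcurves $Z,Z^c,Z',(Z')^c$.
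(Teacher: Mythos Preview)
Your plan has a persistent misreading that undermines both items: a node $R'\in\term_{Z'}\cap Z$ need \emph{not} have both branches in $Z$; it only means at least one branch lies in $Z$, and the other may well lie in $Z^c$ (i.e.\ $R'\in\term_Z$). In item (i) this breaks your opening claim that $R_1',R_2'\notin\term_{Z\cup Z'}$: if $R'\in\term_Z\cap\term_{Z'}$ with its $(Z')^c$-branch in $Z^c$, then $R'$ \emph{is} a terminal point of $Z\cup Z'$. The paper deals with exactly this by splitting off the case $\#(\term_Z\cap\term_{Z'})=2$ and invoking Lemma~\ref{free-perf}(\ref{free-perf(ii)}) (perfectness) there, so you cannot simply assert the bound $k_{Z\cup Z'}\le 4$ by your route. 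Your fallback argument to exclude $k_{Z\cup Z'}=4$ also fails: even granting that $R_1',R_2'$ have both branches in $Z$, such a node \emph{does} lie in $\term_{Z\wed Z'}$ (one branch in $Z\wed Z'$, the other in $Z\cap(Z')^c\subseteq(Z\wed Z')^c$), so you cannot remove them from $\term_{Z\wed Z'}$ as you propose.

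The idea you are missing in (i) is a connectivity argument: once one of $R',S'$ genuinely has both branches in $Z$, the paper uses that $Z'$ is connected (a tail) together with $S'\notin Z^c$ and $T'\notin Z$ to produce a terminal point $U\in\term_Z$ with \emph{both} branches in $Z'$; this extra node $U$ is then excluded from $\term_{Z\cup Z'}$, which is what pushes the bound from $4$ down to $3$. Your table of branch positions would not by itself yield such a $U$---you must argue from connectedness of $Z'$. For item (ii) you have the counting reversed: the efficient move is to look at the \emph{four} nodes \emph{not} lying in the other subcurve (two of $\term_{Z'}$ outside $Z$, two of $\term_Z$ outside $Z'$); each of these is outside $Z\wed Z'$ entirely, hence outside $\term_{Z\wed Z'}$, so by \eqref{term} one gets $\term_{Z\wed Z'}\subseteq\{T,T'\}$ immediately. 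Your proposed bookkeeping centred on the nodes that \emph{are} in the other subcurve, combined with the misreading above, is why the argument became muddled.
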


\begin{proof}
Write $\term_Z=\{R,S,T\}$ and $\term_{Z'}=\{R',S',T'\}$.

We show (i). 
Suppose  $\#(\term_{Z'}\cap Z)=2$, with $\{R',S'\}$ contained in $Z$ and $T'$ not contained in $Z$. Let us  prove that  $k_{Z\cup Z'}$ is in $\{2,3\}$. We distinguish two cases. 
Assume $\term_Z\cap \term_{Z'}=2$. It follows from item (ii) Lemma \ref{free-perf} that $(Z, Z')$ is perfect,  hence one of the following conditions holds 
$$Z\subseteq Z', \,\,\, Z'\subseteq Z, \,\,\, Z^c\subseteq Z', \,\,\, Z'\subseteq Z^c.$$  We are done if the first  or the second condition holds. If $Z^c\subseteq Z'$, then $Z\cup Z'=C$, while if $Z'\subseteq Z^c$, then $Z\wed Z'$ is empty, and  we get a contradiction in both cases. 
Assume $\term_Z\cap \term_{Z'}\ne 2$, with $R'$ in $Z$ and $S'$ not in  $Z^c$. We have $C_{S',1}\cup C_{S',2}\subseteq Z$, and hence $S'$ is not a terminal point of $Z\cup Z'$. Moreover, $Z'$ is connected, because it is a tail. Since $S'$ is not in $Z^c$ and $T'$ is not in $Z$, there is a terminal point $U$ of $Z$ such that $C_{U,1}\cup C_{U,2}\subseteq Z'$. As a consequence, we see that  $U$ is not a terminal point of $Z\cup Z'$, and 
it follows from (\ref{term}) that $\term_{Z\cup Z'}$ is contained in $\{R,S,T,R',T'\}\setminus\{U\}$, for $U$ in $\{R,S,T\}$. Since $Z$ and $Z'$ are 3-tails, the set $\term_{Z}\cup \term_{Z'}$ contains no separating nodes. Thus, using again (\ref{term}), we have $k_{Z\cup Z'}>1$. To conclude the proof of (i), we need only show that $k_{Z\cup Z'}\le 3$.  If $R'$ is not a terminal point of $Z$, then $R'$ is not in $Z^c$ and hence $C_{R',1}\cup C_{R',2}$ is contained in $Z$. In this case, $R'$ is not a terminal point of $Z\cup Z'$,  hence $k_{Z\cup Z'}\le 3$. On the other hand, if $R'$ is a terminal point of $Z$, then either $R'=U$ and it is not a terminal point of $Z\cup Z'$, or $R'$ is in $\{R,S,T\}\setminus\{U\}$. In any case, the inclusion 
$\term_{Z\cup Z'}\subseteq \{R,S,T,T'\}\setminus\{U\}$ holds, hence $k_{Z\cup Z'}\le 3$.

Suppose $\#(\term_{Z'}\cap Z)=\#(\term_{Z}\cap Z')=1$, where $\{R',S'\}\cap Z$ and $\{R,S\}\cap Z'$ are empty sets. Then $\{R,S,R',S'\}$ intersects $(Z\wed Z')$, and hence $\term_{Z\wed Z'}$, in the empty set. Therefore, from (\ref{term}), we see that the inclusion $\term_{Z\wed Z'}\subseteq \{T,T'\}$ holds. Arguing as for $Z\cup Z'$, we have $k_{Z\wed Z'}>1$, hence $k_{Z\wed Z'}=2$.
  \end{proof}

Let $C$ be a nodal curve with irreducible components $C_1,\dots, C_p$. 
For every $(i,j)$ in $\{1,\dots,p\}^2$, 
consider the set
$$\S^3_{i,j}:=\{Z: Z \text{ is a $\T^2_{i,j}$-free 3-tail of $C$ such that } C_i\cup C_j\subseteq Z \text{ and } C_1\subseteq Z^c\}.$$

\begin{Prop}\label{S3}
Let $C$ be  a nodal curve with irreducible components $C_1,\dots, C_p$ and let $(i,j)$ be in $\{1,\dots,p\}^2$. If $Z,Z'$ are in $\S^3_{i,j}$, then $Z\wed Z'$ is in $\S^3_{i,j}$.
\end{Prop}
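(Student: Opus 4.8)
The plan is to verify the four requirements for $Z\wed Z'$ to lie in $\S^3_{i,j}$: that $C_i\cup C_j\subseteq Z\wed Z'$, that $C_1\subseteq(Z\wed Z')^c$, that $Z\wed Z'$ is a $3$-tail, and that $Z\wed Z'$ is $\T^2_{i,j}$-free. The first holds because $C_i$ and $C_j$ are components of both $Z$ and $Z'$, hence of $Z\wed Z'$; the second holds because $C_1\subseteq Z^c\subseteq Z^c\cup(Z')^c=(Z\wed Z')^c$. The $\T^2_{i,j}$-freeness is also immediate, and does not even use that $Z\wed Z'$ is a tail: by Lemma \ref{term-proof} we have $\term_{Z\wed Z'}\subseteq\term_Z\cup\term_{Z'}$, and since $Z$ and $Z'$ are $\T^2_{i,j}$-free the set $\term_W$ is disjoint from both $\term_Z$ and $\term_{Z'}$ for every $W\in\T^2_{i,j}$; hence $\term_{Z\wed Z'}\cap\term_W=\emptyset$, i.e.\ $(Z\wed Z',W)$ is free. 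So the real content of the proposition is that $Z\wed Z'$ is a $3$-tail.

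I would prove this by cases on the pair $(Z,Z')$. If $(Z,Z')$ is perfect, then the inclusions $Z^c\subseteq Z'$ and $Z'\subseteq Z^c$ are impossible, since they would force $C_1\subseteq Z'$ (against $C_1\subseteq(Z')^c$) or $C_i\subseteq Z^c$ (against $C_i\subseteq Z$); hence $Z\subseteq Z'$ or $Z'\subseteq Z$, so $Z\wed Z'$ equals $Z$ or $Z'$ and is a $3$-tail. Now assume $(Z,Z')$ is not perfect. Applying Lemma \ref{free-perf}(i) to $Z'$ against $Z$ and against $Z^c$, and to $Z$ against $Z'$ and against $(Z')^c$, one sees that $\term_{Z'}$ meets both $Z$ and $Z^c$ and that $\term_Z$ meets both $Z'$ and $(Z')^c$; in particular $\#(\term_{Z'}\cap Z)$ and $\#(\term_Z\cap Z')$ each lie in $\{1,2\}$. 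Keeping track, for each terminal point of $Z$ and of $Z'$, of which local branch lies in $Z$ versus $Z^c$ and in $Z'$ versus $(Z')^c$ --- or, equivalently, invoking Lemma \ref{3tail} together with the identities in Lemma \ref{term-proof} when $(Z,Z')$ is free --- one obtains $k_{Z\wed Z'}\in\{2,3,4\}$, with the value $4$ forcing $k_{Z\cup Z'}=2$.

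The proof is then completed by excluding the bad values, and this is the crux. Suppose $k_{Z\wed Z'}=2$. By Lemma \ref{connect}(i) the subcurve $Z\wed Z'$ is a tail, hence a $2$-tail with $C_i\cup C_j\subseteq Z\wed Z'$ and $C_1\subseteq(Z\wed Z')^c$, so $Z\wed Z'\in\S^2_{i,j}$. By Corollary \ref{maxT2} there is a $(Z\wed Z')$-terminal tail $W\in\T^2_{i,j}$ contained in $Z\wed Z'$; then $\term_W$ meets $\term_{Z\wed Z'}\subseteq\term_Z\cup\term_{Z'}$, contradicting the $\T^2_{i,j}$-freeness of $Z$ or of $Z'$. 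The identical argument, applied via Lemma \ref{connect}(ii) to the $2$-tail $Z\cup Z'\in\S^2_{i,j}$, rules out $k_{Z\cup Z'}=2$ and hence the value $k_{Z\wed Z'}=4$. Therefore $k_{Z\wed Z'}=3$, and by Lemma \ref{connect}(i) the subcurve $Z\wed Z'$ is a tail, hence a $3$-tail, so $Z\wed Z'\in\S^3_{i,j}$. The main obstacle is the branch bookkeeping in the non-perfect case: it has to be organized so that precisely the two reductions $k_{Z\wed Z'}=2$ and $k_{Z\cup Z'}=2$ remain, and the Example preceding Lemma \ref{3tail} shows that some hypothesis like $\T^2_{i,j}$-freeness is genuinely needed to kill them.
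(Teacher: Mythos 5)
Your overall architecture is the same as the paper's: the containment and $\T^2_{i,j}$-freeness checks via (\ref{term}), the reduction to showing $k_{Z\wed Z'}=3$, and above all the exclusion mechanism (if $k_{Z\wed Z'}=2$, respectively $k_{Z\cup Z'}=2$, then Lemma \ref{connect} makes $Z\wed Z'$, respectively $Z\cup Z'$, a $2$-tail lying in $\S^2_{i,j}$, Corollary \ref{maxT2} produces a terminal tail of $\T^2_{i,j}$, and (\ref{term}) contradicts the $\T^2_{i,j}$-freeness of $Z$ or $Z'$) are exactly those of the paper. The gap is at the step you yourself flag as the crux: the claim that in the non-perfect case $k_{Z\wed Z'}\in\{2,3,4\}$, with the value $4$ forcing $k_{Z\cup Z'}=2$, is asserted rather than proved, and the tools you point to do not deliver it as you have set them up. Lemma \ref{3tail} applied directly to the pair $(Z,Z')$ with your counts $\#(\term_{Z'}\cap Z)$ and $\#(\term_Z\cap Z')$ controls $k_{Z\cup Z'}$ (item (i)) and, in the $(1,1)$ case, yields $k_{Z\wed Z'}=2$ (item (ii)); it gives no upper bound on $k_{Z\wed Z'}$ in the remaining cases, and the additivity $k_{Z\wed Z'}+k_{Z\cup Z'}=\#(\term_Z\cup\term_{Z'})$ coming from Lemma \ref{term-proof} is available only when $(Z,Z')$ is free, whereas a non-perfect pair can perfectly well be terminal; in that situation ``keeping track of branches'' is carrying the whole argument with no details supplied.

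What is missing is precisely the paper's organization of this step: one cases on $a:=\#(\term_{(Z')^c}\cap Z^c)$ and $b:=\#(\term_{Z^c}\cap (Z')^c)$ and applies Lemma \ref{3tail} to the complementary $3$-tails $Z^c$ and $(Z')^c$, for which $Z^c\cup(Z')^c=(Z\wed Z')^c$ and $Z^c\wed(Z')^c=(Z\cup Z')^c$, so that the lemma speaks directly about the quantity you need. The values $a,b\in\{0,3\}$ are disposed of by Lemma \ref{free-perf}(i) (this is essentially your perfect case, which you do handle correctly); if $a=2$ or $b=2$, item (i) of Lemma \ref{3tail} gives $k_{Z\wed Z'}\in\{2,3\}$; and if $a=b=1$, item (ii) gives $k_{Z\cup Z'}=2$. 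This trichotomy --- either $k_{Z\wed Z'}\in\{2,3\}$ or $k_{Z\cup Z'}=2$ --- is what your two exclusions then convert into $k_{Z\wed Z'}=3$. Your stronger-sounding numerical claim is in fact true, but only after an analysis of this kind (in particular one must check that terminal non-perfect configurations with $k_{Z\wed Z'}\ge 4$ cannot occur), so as written the proof is incomplete exactly at its central point.
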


\begin{proof}
 Of course, $C_i\cup C_j$ and $C_1$ are contained respectively in $Z\wed Z'$ and $(Z\wedge Z')^c$. 
  We claim that $Z\wed Z'$ is $\T^2_{i,j}$-free. Indeed, for every $W$ in $\T^2_{i,j}$, using (\ref{term}) we have
  \begin{equation}\label{termcup}
  \term_W\cap \term_{Z\wed Z'}\subseteq \term_W\cap (\term_Z\cup \term_{Z'}).
  \end{equation}
Since $Z$ and $Z'$ are $\T^2_{i,j}$-free, the right hand side of (\ref{termcup}) must be empty, and the claim follows. Therefore, to conclude the proof,  we only need to show that $Z\wed Z'$ is a 3-tail;  since $k_{Z\wed Z'}\ge 1$, using item (i) of Lemma \ref{connect}, we see that it suffices that $k_{Z\wed Z'}=3$. We distinguish three cases.

 Suppose $\#(\term_{(Z')^c}\cap  Z^c)=3$, i.e. $\term_{(Z')^c}$ is contained in $Z^c$. It follows from item (i) of Lemma \ref{free-perf} that either $(Z')^c\subseteq Z^c$, and hence $Z\wed Z'=Z$,  or $Z'\subseteq Z^c$, and we get a contradiction, because $C_i$ is contained in $\ol{Z'\setminus Z^c}$, and we are done. 
 Arguing in the same fashion and using that $C_1$ is contained in $\ol{(Z')^c\setminus Z}$, we are done if either 
 $\#(\term_{(Z')^c}\cap  Z^c)=0$, or  $\#(\term_{Z^c}\cap  (Z')^c)$ is in $\{0,3\}$.

Suppose either $\#(\term_{(Z')^c}\cap  Z^c)=2$, or $\#(\term_{Z^c}\cap  (Z')^c)=2$.  
Since $k_{Z^c\wedge (Z')^c}=k_{Z\cup Z'}\ge1$ and $k_{Z^c\cup (Z')^c}=k_{Z\wed Z'}\ge1$,  it follows from item (i) of Lemma \ref{3tail} that  $k_{Z\wed Z'}$ 
is in $\{2,3\}$. 
By contradiction, assume that $k_{Z\wed Z'}=2$.  Then item (i) of Lemma \ref{connect} implies that $Z\wed Z'$ is a 2-tail. 
Since $C_i\cup C_j$ and $C_1$ are contained respectively in $Z\wed Z'$ and $(Z\wedge Z')^c$, it follows from Corollary \ref{maxT2} that there is a $(Z\wed Z')$-terminal tail $W$ in $\T^2_{i,j}$. 
Therefore we get 
$$\emptyset\ne \term_W\cap\term_{Z\wed Z'}\subseteq \term_W\cap (\term_Z\cup\term_{Z'}),$$ 
 In this way, at least one between $Z$ and $Z'$ is 
not $\T^2_{i,j}$-free, yielding a contradiction.

Suppose $\#(\term_{(Z')^c}\cap  Z^c)=\#(\term_{Z^c}\cap  (Z')^c)=1$. It follows from item (ii) of Lemma \ref{3tail} that $k_{Z\cup Z'}=k_{Z^c\wed (Z')^c}=2$. 
Hence item (ii) of Lemma \ref{connect}  implies that 
$Z\cup Z'$ is a 2-tail. Since $C_i\cup C_j$ and $C_1$ are contained respectively in $Z\cup Z'$ and $(Z\cup Z')^c$, it follows from Corollary \ref{maxT2} that there is a $(Z\cup Z')$-terminal tail $W$ in $\T^2_{i,j}$. 
Arguing as in Case 2, at least one between $Z$ and $Z'$ is not $\T^2_{i,j}$-free, again a contradiction. The proof of the proposition is now complete. 
\end{proof}

Let $C$ be a nodal curve with irreducible components $C_1,\dots, C_p$ and let $(i,j)$ be in $\{1,\dots,p\}^2$. 
 If $\S^3_{i,j}$ is nonempty, set $W^3_0:=\wedge_{Z\in \S^3_{i,j}} Z$.  
It follows from Proposition \ref{S3} that $W^3_0$ is in $\S^3_{i,j}$. 
For every positive integer $m$, define inductively
$$\S^3_{i,j,m}:=\{Z : Z\in \S^3_{i,j} \text{ and } W^3_{m-1}\lhd Z\}.$$
and, if $\S^3_{i,j,m}$ is nonempty, we let
$$W^3_m:=\wed_{Z\in \S^3_{i,j,m}} Z.$$
Arguing as for the set of nested 2-tail of $C$, the tail $W^3_m$ is in $\S^3_{i,j,m}$, for every $m$ such that $\S^3_{i,j,m}$ is nonempty.
Let $N$ be the maximum positive integer such that $\S^3_{i,j,N}$ is non empty, and consider the set 
$$\T^3_{i,j}:=\{W^3_0,\dots W^3_{N}\}.$$
  We call $\T^3_{i,j}$ \emph{the set of nested 3-tail of $C$ with respect to $(i, j)$}.

For every $(i,j)$ in $\{1,\dots,p\}^2$,  we set 
\begin{equation}\label{nestik}
\T_{i,j}:=\T^1_{i,j}\sqcup \T^2_{i,j}\sqcup \T^3_{i,j}.
\end{equation}  
where $\T^1_{i,j}:=\T^1_i\sqcup\T^1_j$.
We call $\T_{i,j}$ \emph{the set of nested tails of $C$ with respect to $(i, j)$}.

\begin{Cor}\label{maxT3} 
Let  $C$ be a nodal curve with irreducible components $C_1,\dots,C_p$.  Let $\T^2_{i,j}$ and $\T^3_{i,j}$ be the  sets of nested 2-tails and 3-tails of $C$ with respect to $(i, j)$, where $(i, j)$ is in $\{1,\dots,p\}^2$.  If $Z$ is a 3-tail of $C$ such that $C_i\cup C_j$ and $C_1$ are contained respectively in $Z$ and $Z^c$, then there is a $Z$-terminal tail $W$ in $\T^2_{i,j}\cup \T^3_{i,j}$; if $k_W=3$, then $W$ is contained in $Z$.
\end{Cor}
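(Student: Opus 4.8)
The plan is to handle separately the two possible "sizes" of terminal tail that can occur, following the template of Corollary \ref{maxT2}. First I would dispose of the case where $Z$ already contains $W^2_0$, the minimal tail of $\T^2_{i,j}$: since $C_i\cup C_j\subseteq Z$ and $W^2_0$ is by construction the intersection of all $2$-tails separating $C_i\cup C_j$ from $C_1$, one must first observe that $W^2_0$ \emph{is} contained in $Z$ whenever $Z$ is a tail (not merely a $2$-tail) with $C_i\cup C_j\subseteq Z$, $C_1\subseteq Z^c$ — this uses Proposition \ref{2tail} together with the fact that $W^2_0\wed Z$ is a $2$-tail contained in every member of $\S^2_{i,j}$, forcing $W^2_0\wed Z=W^2_0$. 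Wait — this needs care, because $Z$ is a $3$-tail and $W^2_0\wed Z$ need not obviously be a $2$-tail; here I would invoke Lemma \ref{connect}(i), noting $k_{W^2_0\wed Z}\ge 1$ since $C_i\cup C_j\subseteq W^2_0\wed Z$ and $C_1\subseteq(W^2_0\wed Z)^c$, which gives $k_{W^2_0\wed Z}\in\{1,2,3\}$ only after bounding it, so instead I would argue via $\#(\term_{W^2_0}\cap Z)$ and Lemma \ref{free-perf}(i) applied to the tail $Z$ to conclude $W^2_0\subseteq Z$ or $(W^2_0)^c\subseteq Z$, the latter being impossible as $C_1\notin Z$. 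So in all cases $W^2_0\subseteq Z$.

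Next, among the tails of $\T^2_{i,j}$ contained in $Z$, let $W^2_m$ be the maximal one. If the pair $(W^2_m,Z)$ is free then $W^2_m\lhd Z$, so $Z$ belongs to $\S^2_{i,j,m+1}$ and hence $W^2_m\lhd W^2_{m+1}\subseteq Z$, contradicting maximality of $m$; therefore $(W^2_m,Z)$ is terminal, and $W:=W^2_m$ is a $Z$-terminal tail in $\T^2_{i,j}$ — this already proves the first assertion of the corollary, by exactly the argument of Corollary \ref{maxT2}.

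It remains to prove the refinement: if every $Z$-terminal tail one finds has $k_W<3$ forces us into $\T^3_{i,j}$, and in that situation the $Z$-terminal tail of $\T^3_{i,j}$ is contained in $Z$. So suppose the $Z$-terminal $W^2_m\in\T^2_{i,j}$ has $k_{W^2_m}=2$ (it cannot be $3$ as $\T^2_{i,j}$ consists of $2$-tails). I would then show $Z\in\S^3_{i,j}$: $Z$ is a $3$-tail with $C_i\cup C_j\subseteq Z$, $C_1\subseteq Z^c$, so the only thing to check is that $Z$ is $\T^2_{i,j}$-free. This is where Lemma \ref{free-perf}(ii) enters: if $(Z,W)$ were terminal for some $W\in\T^2_{i,j}$, then — using that $W\subseteq W^2_m\subseteq Z$ or that $W\lhd$-dominates $W^2_m$ along the nested chain, hence $\term_W\cap\term_{W^2_m}$ is controlled — I claim $\#(\term_Z\cap\term_W)=k_W-1=1$ is impossible because $\term_W\subseteq Z$ would give perfection $(Z,W)$ by Lemma \ref{free-perf}(ii), and perfection with $W$ a $2$-tail inside $Z$ forces $W\subseteq Z$, i.e. $(Z,W)$ free, contradiction. (The nested structure of $\T^2_{i,j}$ guarantees any $W\in\T^2_{i,j}$ satisfies either $W\subseteq W^2_m$ or $W^2_m\lhd W$; the latter is excluded by maximality of $m$ among those contained in $Z$ combined with $(W^2_m,Z)$ terminal. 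Handling this bookkeeping cleanly is the main obstacle.) Once $Z\in\S^3_{i,j}$, the definition of $\T^3_{i,j}$ via $W^3_0=\wed_{Y\in\S^3_{i,j}}Y$ and Proposition \ref{S3} gives $W^3_0\subseteq Z$; taking the maximal $W^3_n\in\T^3_{i,j}$ contained in $Z$ and repeating the freeness-versus-$\lhd$ dichotomy (now for $\S^3_{i,j,n+1}$) shows $W:=W^3_n$ is $Z$-terminal, and since it is a $3$-tail contained in $Z$ we get $k_W=3$ with $W\subseteq Z$, as required.

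The step I expect to be the genuine obstacle is the verification that $Z\in\S^3_{i,j}$ — specifically that $Z$ is $\T^2_{i,j}$-free — since it requires exploiting the nestedness of $\T^2_{i,j}$ to rule out a terminal pairing $(Z,W)$ with $W$ a $2$-tail while $Z$'s unique $Z$-terminal member of $\T^2_{i,j}$ is already known. I would isolate this as a short sub-lemma: \emph{if $W^2_m$ is the maximal member of $\T^2_{i,j}$ contained in a tail $Z$ and $(W^2_m,Z)$ is terminal, then $Z$ is $\T^2_{i,j}$-free} — proving it by walking down the chain $W^2_0\lhd\cdots\lhd W^2_m$ and up past it, using Lemma \ref{free-perf}(ii)–(iii) at each end. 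Everything else is a mechanical transcription of the proof of Corollary \ref{maxT2} with "$2$-tail" replaced by "$3$-tail" and $\S^2$ by $\S^3$.
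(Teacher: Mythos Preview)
Your argument has a genuine gap and, separately, a structural self-contradiction.

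The gap is in your second paragraph. You take the maximal $W^2_m\in\T^2_{i,j}$ contained in $Z$ and argue: if $(W^2_m,Z)$ is free then $W^2_m\lhd Z$, so $Z\in\S^2_{i,j,m+1}$, forcing $W^2_{m+1}\subseteq Z$. But $Z$ is a \emph{3-tail}, so $Z\notin\S^2_{i,j}$ and hence $Z\notin\S^2_{i,j,m+1}$; the implication $W^2_{m+1}\subseteq Z$ does not follow. The Corollary~\ref{maxT2} argument works there precisely because $Z$ is a 2-tail, which puts $Z$ itself into $\S^2_{i,j,m+1}$; that mechanism is unavailable here. (Also, $\T^2_{i,j}$ may be empty, in which case your first paragraph never gets started.)

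The self-contradiction is in your third paragraph and your proposed sub-lemma. Having (you believe) established that $W^2_m\in\T^2_{i,j}$ is $Z$-terminal, you then try to prove $Z\in\S^3_{i,j}$, i.e.\ that $Z$ is $\T^2_{i,j}$-free. But ``$\T^2_{i,j}$-free'' means $(Z,W)$ is free for every $W\in\T^2_{i,j}$, which directly contradicts $(W^2_m,Z)$ being terminal. Your sub-lemma makes this explicit: its hypothesis is that $(W^2_m,Z)$ is terminal with $W^2_m\in\T^2_{i,j}$, and its conclusion is that $Z$ is $\T^2_{i,j}$-free --- these cannot both hold.

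The paper's proof avoids all of this by taking the dichotomy at the outset: either $Z$ is $\T^2_{i,j}$-free or it is not. If not, some $W\in\T^2_{i,j}$ is $Z$-terminal, $k_W=2$, and the second clause of the statement is vacuous. If $Z$ \emph{is} $\T^2_{i,j}$-free, then $Z\in\S^3_{i,j}$ by definition (no work needed), so $W^3_0\subseteq Z$, and now the Corollary~\ref{maxT2} argument applied verbatim to $\T^3_{i,j}$ produces a $Z$-terminal $W\in\T^3_{i,j}$ contained in $Z$. You never need to touch $\T^2_{i,j}$ beyond checking whether $Z$ is free with respect to it.
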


\begin{proof}
If $Z$ is not $\T^2_{i,j}$-free, then we are done.
If $Z$ is $\T^2_{i,j}$-free, then, by the definition of $\T^3_{i,j}$, the tail $W^3_0$ is contained in $Z$. Arguing as in the proof of Corollary  \ref{maxT2}, the maximal tail of $\T^3_{i,j}$ contained in $Z$ is $Z$-terminal, and hence we are done.
\end{proof}

\section{Further results on tails of nodal curves}\label{sec5}

\noindent
Throughout this section, $C$ will be a nodal curve with irreducible components $C_1,\dots,C_p$, and 
$\T^2_{i,j},\T^3_{i,j}$ will be respectively the sets of nested 2-tails and 3-tails of $C$ with respect to $(i, j)$, for $(i, j)$ in $\{1,\dots,p\}^2$.

\begin{Lem}\label{res1} 
Let $Z$ be a tail of $C$ with $k_Z\ge 3$. Then there are no $Z$-terminal tails $W$ in $\T^2_{i,j}$ and $W'$ in $\T^3_{i,j}$ such that $W\cup W'\subseteq Z$. 
 \end{Lem}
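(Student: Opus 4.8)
The plan is to argue by contradiction: suppose $W\in\T^2_{i,j}$ and $W'\in\T^3_{i,j}$ are both $Z$-terminal with $W\cup W'\subseteq Z$. Since $W$ is a $2$-tail and $W'$ is a $3$-tail, and both are contained in $Z$, I first want to understand the relation between $W$ and $W'$. The key structural input is that $W'$ must be $\T^2_{i,j}$-free (this is built into the definition of $\S^3_{i,j}$, hence of $\T^3_{i,j}$). In particular $(W,W')$ is free, so by Lemma \ref{connect}(i)--(ii) (and Proposition \ref{2tail}, Lemma \ref{term-proof}) the subcurves $W\wed W'$ and $W\cup W'$ are again tails whenever their terminal-point counts are small, and $\term_{W\wed W'}\cup\term_{W\cup W'}=\term_W\cup\term_{W'}$ with the two sets disjoint. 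Thus $k_{W\wed W'}+k_{W\cup W'}=k_W+k_{W'}=5$.

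Next I would locate a contradiction with $\T^2_{i,j}$-freeness of $W'$. Because $W$ is a $2$-tail containing $C_i\cup C_j$ with $C_1\subseteq W^c$, and because $W\subseteq Z$ while $W\cup W'\subseteq Z$, the tail $W\cup W'$ (when it is a tail) again contains $C_i\cup C_j$ and has $C_1$ in its complement. The aim is to show that, under the constraint $W\cup W'\subseteq Z$, one of $W\wed W'$ or $W\cup W'$ is forced to be a $2$-tail with $C_i\cup C_j$ inside and $C_1$ outside; then Corollary \ref{maxT2} produces a $(W\wed W')$-terminal (resp. $(W\cup W')$-terminal) tail $W_0$ in $\T^2_{i,j}$, and via the inclusion $\term_{W_0}\cap\term_{W\wed W'}\subseteq\term_{W_0}\cap(\term_W\cup\term_{W'})$ (from \eqref{term}) one concludes $\term_{W_0}\cap\term_{W'}\ne\emptyset$, contradicting that $W'$ is $\T^2_{i,j}$-free. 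The splitting $5=k_{W\wed W'}+k_{W\cup W'}$ into admissible parts is what makes this work: the only partitions with both parts $\ge 1$ are $\{1,4\},\{2,3\}$; Lemma \ref{connect} rules out a part equal to $1$ (a terminal point of a tail is never disconnecting), so we are in the $\{2,3\}$ case, and exactly one of the two subcurves is a $2$-tail.

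The remaining point is to check that this $2$-tail really sits between $C_i\cup C_j$ and $C_1$ as required by Corollary \ref{maxT2}. Since $C_i\cup C_j\subseteq W\subseteq W\cap W'$ forces $C_i\cup C_j\subseteq W\wed W'$, and dually $C_1\subseteq W^c\cap (W')^c=(W\cup W')^c$, both $W\wed W'$ and $W\cup W'$ have $C_i\cup C_j$ inside and $C_1$ in the complement, so whichever one is the $2$-tail qualifies. This yields the contradiction and proves the lemma.

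I expect the main obstacle to be the bookkeeping in the borderline case where $k_{W\wed W'}=2$ \emph{and} simultaneously one wants to use $W'$ rather than $W\cup W'$: one must be careful that the terminal tail extracted from Corollary \ref{maxT2} genuinely meets $\term_{W'}$ and not only $\term_W$ — but this is handled because $W$ itself is already in $\T^2_{i,j}$, so if the extracted tail met only $\term_W$ it would just be $W$ (or nested with it), and then its meeting $\term_{W\wed W'}$ at a point disjoint from $\term_W$ is impossible by the disjointness $\term_{W\wed W'}\cap\term_W\subseteq\term_{W\wed W'}\cap\term_{W\cup W'}=\emptyset$. So the point necessarily lies in $\term_{W'}$, as needed.
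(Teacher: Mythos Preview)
Your setup is correct: $(W,W')$ is free because $W'\in\T^3_{i,j}\subseteq\S^3_{i,j}$, and then Lemma~\ref{connect}(iii) gives the split $k_{W\wed W'}+k_{W\cup W'}=5$ with both pieces tails. Applying Corollary~\ref{maxT2} to whichever piece is the $2$-tail also matches the paper's Case~1 (where $k_{W\wed W'}=2$). The problem is the step where you rule out $W_0=W$. Your justification, the inclusion $\term_{W\wed W'}\cap\term_W\subseteq\term_{W\wed W'}\cap\term_{W\cup W'}=\emptyset$, is simply false: if $W\subseteq W'$ with $(W,W')$ free, then $W\wed W'=W$ and $\term_{W\wed W'}\cap\term_W=\term_W\ne\emptyset$. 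The paper's argument instead uses the hypothesis that $W$ is $Z$-terminal together with $W'\subseteq Z$: pick $R\in\term_Z\cap\term_W$; since $W'\subseteq Z$, the two branches through $R$ cannot both lie in $W'$ (else $R\notin\term_Z$) nor straddle $W'$ (else $R\in\term_W\cap\term_{W'}$, violating freeness), so $R\in W\setminus W'$, whence $W\wed W'\subsetneq W$ and thus $W_0\ne W$. Your argument never invokes $Z$-terminality here.

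More seriously, in the other case $k_{W\cup W'}=2$ your uniform approach via Corollary~\ref{maxT2} breaks down. The corollary, applied to $W\cup W'$, may return $W_0=W$ (indeed $W\subseteq W\cup W'$, and the same $Z$-terminality reasoning shows $W$ is $(W\cup W')$-terminal), and then no contradiction with $\T^2_{i,j}$-freeness of $W'$ follows. The paper handles this case by an entirely different argument: by Lemma~\ref{term-lem}(iv) one has $\term_Z\cap(\term_W\cup\term_{W'})\subseteq\term_{W\cup W'}$; the left side has at least two elements (one from each of $W,W'$ being $Z$-terminal, disjoint by freeness), so equality holds and $\term_{W\cup W'}$ is a disconnecting subset of $\term_Z$ of cardinality $2$. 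This contradicts $Z$ being a tail with $k_Z\ge3$---and this is exactly where the hypothesis $k_Z\ge3$ enters, a hypothesis your proposal never uses.
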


\begin{proof}
Suppose by contradiction that there are  $Z$-terminal tails $W$ in $\T^2_{i,j}$ and $W'$ in $\T^3_{i,j}$ such that $W\cup W'\subseteq Z$.  
Set $X:=W\wed W'$.  It follows from item (iii) of Lemma \ref{connect} that $X$ is a tail with $k_X$ equal to $2$ or $3$. We distinguish two cases.

\smallskip

\emph{Case 1.} Assume that $X$ is a 2-tail. Since 
$C_i\cup C_j$ and $C_1$ are contained respectively in $X$ and $X^c$, it follows from Corollary \ref{maxT2} that there is a $X$-terminal tail $W''$ in $\T^2_{i,j}$ contained in $X$. 
 
We claim that $W''$ is different from $W$. Since $W'$ is contained in $Z$, it follows that $\term_{Z}\cap W'\subseteq \term_{W'}$, and hence $\term_Z\cap\term_{W}\cap W'\subseteq \term_W\cap\term_{W'}=\emptyset$,  where the last equality holds because $(W, W')$ is free. On the other hand, $W$ is $Z$-terminal, hence  
\begin{equation}\label{empt}
\term_Z\cap\term_{W}\subset W\setminus W'.
\end{equation}
The left hand side of~\eqref{empt} is nonempty, hence $W''\subseteq X\subsetneq W$, and the claim follows. 

Using (\ref{term}), we have
\begin{equation}\label{empt1}
 \term_{W''}\cap\term_X\subseteq \term_{W''}\cap (\term_W\cup\term_{W'}).
\end{equation}
Since the left hand side of~\eqref{empt1} is nonempty, we have that either $(W, W'')$, or $(W',W'')$ are terminal, which is a contradiction to the definition of $\T^2_{i,j}$ and $\T^3_{i,j}$.

\smallskip

\emph{Case 2.}  Assume that $X$ is a 3-tail.  It follows from item (iii) of Lemma \ref{connect} that $k_{W\cup W'}=2$. Furthermore, it follows from item (iv) of Lemma \ref{term-lem} that
\begin{equation}\label{eq1}
\term_Z\cap (\term_W\cup \term_{W'})\subseteq \term_{W\cup W'}.
\end{equation} 
The left hand side of (\ref{eq1}) has cardinality at least 2, because $W$ and $W'$ are $Z$-terminal and $(W, W')$ is free. Therefore, the equality holds in (\ref{eq1}), because $k_{W\cup W'}=2$. It follows that 
 $\term_{W\cup W'}$ is a desconnecting subset of $\term_Z$ of cardinality 2, which is  a contradiction because $Z$ is a tail such that $k_Z\ge 3$.
\end{proof}

\begin{Lem}\label{res2}
Let $Z$ be a tail of  $C$ with $k_Z\ge 4$. There are no $Z$-terminal tails $W$ in $\T^2_{i,j}$ and $W'$ in $\T^3_{i,j}$ such that $Z\subseteq W\wedge W'$ and $\#(\term_{W'}\cap \term_Z)=2$.
\end{Lem}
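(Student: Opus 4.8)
The plan is to argue by contradiction, mimicking the structure of Lemma \ref{res1}, Case 2. Suppose there are $Z$-terminal tails $W\in\T^2_{i,j}$ and $W'\in\T^3_{i,j}$ with $Z\subseteq W\wed W'$ and $\#(\term_{W'}\cap\term_Z)=2$. First I would record that $(W,W')$ is free: both lie in $\T_{i,j}$, and a common terminal node would contradict the defining property that members of $\T^3_{i,j}$ are $\T^2_{i,j}$-free. Since $W$ is a $2$-tail and $W'$ a $3$-tail and the pair is free with $k_{W\wed W'}\ge k_Z\ge 4\ge 1$ and $k_{W\cup W'}\ge 1$, item (iii) of Lemma \ref{connect} applies: $k_{W\wed W'}>1$, $k_{W\cup W'}>1$, $k_{W\wed W'}+k_{W\cup W'}=5$, and both $W\wed W'$ and $W\cup W'$ are tails. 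In particular $k_{W\wed W'}\in\{2,3\}$, so $k_{W\wed W'}\le 3$.

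Next I would confront the hypothesis $Z\subseteq W\wed W'$, which forces $k_Z\le$ (something bounded in terms of $k_{W\wed W'}$) via item (iii) of Lemma \ref{term-lem}: applying that item with the roles matched so that $W\cup W'\subseteq$ the ambient tail, or rather applying item (iii) directly to the inclusion $Z\subseteq W\wed W'=:X$, gives $\term_Z\cap(\term_W\cup\term_{W'})\subseteq\term_X$. The point is to show the left-hand side is large. Because $W$ is $Z$-terminal, $\term_Z\cap\term_W\ne\emptyset$; because $W'$ is $Z$-terminal, $\term_Z\cap\term_{W'}\ne\emptyset$, and in fact the hypothesis $\#(\term_{W'}\cap\term_Z)=2$ says $\term_Z$ contains two terminal nodes of $W'$. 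Since $(W,W')$ is free, $\term_W\cap\term_{W'}=\emptyset$, so $\term_Z\cap\term_W$ and $\term_Z\cap\term_{W'}$ are disjoint, and their union has cardinality at least $1+2=3$. Combined with $\term_Z\cap(\term_W\cup\term_{W'})\subseteq\term_X$ and $k_X=k_{W\wed W'}\le 3$, we get equality and $k_X=3$, hence $\term_X\subseteq\term_Z$ with $\#\term_X=3$. But $X=W\wed W'$ is a tail, so no proper subset of $\term_X$ is desconnecting; as $\term_X$ is a desconnecting subset of $\term_Z$ of cardinality $3<k_Z$... — here I would instead note that $\term_X$ itself, being $\subsetneq\term_Z$ (since $k_Z\ge 4>3=k_X$) and desconnecting ($X$ is a tail, so $X,X^c$ connected, so $\term_X$ is a desconnecting subset of the nodes of $C$, and it is a proper subset of $\term_Z$), contradicts the fact that $Z$ is a tail with $k_Z\ge 4$: a proper nonempty subset of $\term_Z$ cannot be desconnecting.

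The main obstacle I anticipate is getting the cardinality bookkeeping exactly right: pinning down that $\term_Z\cap\term_W$ is nonempty from $Z$-terminality of $W$, that the two intersections are genuinely disjoint (using freeness), and that the inclusion into $\term_X$ combined with $k_X\le 3$ forces $k_X=3$ and $\term_X\subsetneq\term_Z$ — the last strict inclusion needing $k_Z\ge 4$, which is precisely why this hypothesis (stronger than in Lemma \ref{res1}) is imposed. A secondary subtlety is confirming that Lemma \ref{connect}(iii) is applicable, i.e. that $(k_W,k_{W'})=(2,3)$ and $k_{W\wed W'}\ge 1$, $k_{W\cup W'}\ge 1$; the first is the hypothesis on the tails' types, and $k_{W\wed W'}\ge k_Z\ge 4\ge 1$ is immediate from $Z\subseteq W\wed W'$, while $k_{W\cup W'}\ge 1$ holds because $W\cup W'\ne C$ (it has $C_1$ in its complement) and is connected, so it is a genuine subcurve with at least one terminal node. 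Once these applicability checks are in place, the contradiction is short.
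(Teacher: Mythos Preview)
Your approach is essentially the same as the paper's: set $X:=W\wed W'$, use Lemma~\ref{connect}(iii) to see $X$ is a tail with $k_X\in\{2,3\}$, use Lemma~\ref{term-lem}(iii) to get $\term_Z\cap(\term_W\cup\term_{W'})\subseteq\term_X$, count the left side as $\ge 3$ via freeness and the hypothesis $\#(\term_{W'}\cap\term_Z)=2$, conclude $k_X=3$ and $\term_X\subsetneq\term_Z$, and derive the contradiction from $Z$ being a tail. The paper's proof is the same argument, compressed.

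One slip to fix: the inequality $k_{W\wed W'}\ge k_Z$ that you derive from $Z\subseteq W\wed W'$ is false in general (inclusion of subcurves says nothing about the number of terminal points, and indeed your own argument later gives $k_{W\wed W'}\le 3<4\le k_Z$). Fortunately you only need $k_{W\wed W'}\ge 1$ to invoke Lemma~\ref{connect}(iii), and that follows simply because $W\wed W'$ is nonempty (it contains $Z$) and proper (it is contained in $W$). With that correction, the plan is sound.
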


\begin{proof}
Suppose by contradiction that there are $Z$-terminal tails $W$ in $\T^2_{i,j}$ and $W'$ in $\T^3_{i,j}$ such that $Z\subseteq W\wedge W'$ and  $\#(\term_{W'}\cap \term_Z)=2$. Set $X:=W\wed W'$.  It follows from item (iii) of Lemma \ref{connect} that $X$ is a tail such that $k_X$ is $2$ or $3$, and from item (iii) of Lemma \ref{term-lem} that
\begin{equation}\label{eq2}
\term_Z\cap (\term_W\cup \term_{W'})\subseteq \term_X.
\end{equation} 
The left hand side of (\ref{eq2}) is a set of cardinality at least 3, because $(W, W')$ is free. Since $k_X\le 3$, the equality holds in (\ref{eq2}) and hence $X$ is a 3-tail.  It follows that $\term_X$ is a desconnecting subset of $\term_Z$ of cardinality 3,  which is a contradiction because $Z$ is a tail with $k_Z\ge 4$.
\end{proof}

\begin{Lem}\label{res3}
Let $Z$ be a tail of  $C$. There are no distinct  tails $W$ and $W'$ of $C$, with $k_W=2$ and $k_{W'}=3$,    
and satisfying the following conditions
\begin{itemize}
\item[(i)]
the pair $(W,W')$ is free;
\item[(ii)]
the tail $Z$ is contained in $W\wed W'$;
\item[(iii)]
the set of terminal points of $Z$ is contained in $\term_W\cup \term_{W'}$;
\item[(iv)]
the tail $Z$ is different from $W\wedge W'$ and $k_{W\cup W'}\ge1$.
\end{itemize}
\end{Lem}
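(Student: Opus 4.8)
The plan is to argue by contradiction. Suppose tails $W$ and $W'$ with the listed properties exist, and set $X:=W\wed W'$ and $Y:=W\cup W'$. The goal is to show that these data force $X$ to be disconnected, which contradicts the fact --- established along the way --- that $X$ is a tail of $C$.

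First I would record the basic structure of $X$ and $Y$. Since $Z$ is a tail it is a nonempty subcurve, and by (ii) it is contained in $X$, so $X$ is nonempty; on the other hand $X\subseteq W\subsetneq C$, so $X$ is a proper subcurve of the connected curve $C$ and therefore $k_X\ge 1$. Combining $k_X\ge 1$ with $k_{Y}\ge 1$ from (iv), the freeness of $(W,W')$ from (i), and $(k_W,k_{W'})=(2,3)$, item (iii) of Lemma \ref{connect} applies; in particular $X$ and $Y$ are tails. Applying Lemma \ref{term-proof} to the free pair $(W,W')$ then gives that $\term_X$ and $\term_Y$ are disjoint and that $\term_X\cup\term_{Y}=\term_W\cup\term_{W'}$.

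The heart of the argument is the claim that $\term_Z\subseteq\term_X$. By (iii) we have $\term_Z\subseteq\term_W\cup\term_{W'}=\term_X\sqcup\term_Y$, so it is enough to see $\term_Z\cap\term_Y=\emptyset$. Assume $R\in\term_Z\cap\term_Y$, with notation chosen so that $C_{R,1}\subseteq Z$ and $C_{R,2}\subseteq Z^c$. By (ii) we have $C_{R,1}\subseteq Z\subseteq X\subseteq W\subseteq Y$, so for $R$ to be a terminal point of $Y$ we must have $C_{R,2}\not\subseteq Y$; in particular $C_{R,2}\not\subseteq W$, hence $C_{R,2}\not\subseteq X$ because $X\subseteq W$, and so $C_{R,2}\subseteq X^c$. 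But then $C_{R,1}\subseteq X$ and $C_{R,2}\subseteq X^c$ show that $R\in\term_X$, contradicting $\term_X\cap\term_Y=\emptyset$. Hence no such $R$ exists and $\term_Z\subseteq\term_X$.

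Finally I would combine $\term_Z\subseteq\term_X$ with the strict inclusion $Z\subsetneq X$, which holds by (ii) together with (iv). Put $Z'':=\ol{X\setminus Z}$, a nonempty subcurve having no irreducible component in common with $Z$. A node $R$ meeting both a component of $Z$ and a component of $Z''$ would have one branch in $Z$ and the other in $Z^c$, hence would lie in $\term_Z$, while both its branches lie in $X$, so it would not lie in $\term_X$; this is impossible since $\term_Z\subseteq\term_X$. Therefore $Z$ and $Z''$ meet in no node, so $X=Z\cup Z''$ is a disjoint union of two nonempty subcurves and is disconnected, contradicting that $X$ is a tail. The step I expect to require the most care is the verification that $\term_Z\cap\term_Y=\emptyset$: one has to keep careful track of which of the two branches $C_{R,1},C_{R,2}$ at $R$ lies in each of $Z$, $X$, $W$, $Y$ and their complements, invoking the chain $Z\subseteq X\subseteq W\subseteq Y$ at exactly the right moments; the remaining connectedness argument, once $X$ is known to be a tail via item (iii) of Lemma \ref{connect}, is routine.
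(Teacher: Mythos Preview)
Your proof is correct and follows the same overall strategy as the paper: set $X=W\wed W'$, show $X$ is a tail via item (iii) of Lemma \ref{connect}, establish $\term_Z\subseteq\term_X$, and derive a contradiction from $Z\subsetneq X$. The differences are purely tactical. For the inclusion $\term_Z\subseteq\term_X$, the paper invokes item (iii) of Lemma \ref{term-lem} directly (which already says $\term_Z\cap(\term_W\cup\term_{W'})\subseteq\term_X$ whenever $Z\subseteq W\wed W'$), whereas you reprove this by decomposing $\term_W\cup\term_{W'}=\term_X\sqcup\term_Y$ via Lemma \ref{term-proof} and then tracking branches to rule out $\term_Z\cap\term_Y$. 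For the final contradiction, the paper observes that $\term_Z$ is a disconnecting subset of $\term_X$, hence $\term_Z=\term_X$ (since $X$ is a tail), hence $Z=X$; you instead argue directly that $\ol{X\setminus Z}$ is disjoint from $Z$, so $X$ is disconnected. Both routes are sound; the paper's is slightly more economical because it reuses Lemma \ref{term-lem}(iii) rather than redoing the branch analysis, and it avoids introducing $Y$ altogether.
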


\begin{proof}
Suppose that there are distinct tails $W$ and $W'$, with $k_W=2$ and $k_{W'}=3$, and  contradicting the statement of the lemma.
Notice that  $k_{W\wedge W'}\ge1$ and $k_{W\cup W'}\ge1$.  Set $X:=W\wed W'$.  Since $(W, W')$ is free, it follows from item (iii) of Lemma \ref{connect} that $X$ is a tail. On the other hand, item (iii) of Lemma \ref{term-lem} implies that the following inclusion holds
\begin{equation}\label{eq3}
\term_Z\cap (\term_W\cup\term_{W'})\subseteq \term_X.
\end{equation}
 Since the set of terminal points of $Z$ is contained in 
 $\term_W\cup \term_{W'}$,  the left hand side of (\ref{eq3}) is equal to $\term_Z$. It follows that the set of terminal points of $Z$ is a desconnecting subset of $\term_X$,  hence $\term_Z$ is equal to $\term_X$, because $X$ is a tail.  Since $Z\subseteq X$, we get that $Z$ is equal to $X$, which  is a contradiction.
\end{proof}

\begin{Lem}\label{res4}
Let $Z$ and $W$ be tails of $C$ such that $k_Z=4$ and  $W$ is in $\T^3_{i,j}$. Assume $W$  contained in $Z$ and 
$\#(\term_Z\cap \term_W)=2$. The following properties hold
 \begin{enumerate} [(i)]
\item  \label{res4(i)}
 there are no tails $W'$ in $\T^3_{i,j}$ and $W''$ in $\T^2_{i,j}$ such that 
 $Z\subseteq W'\wed W''$ and 
  $$\#(\term_Z\cap\term_{W'})=\#(\term_Z\cap \term_{W''})=1.$$ 
\item \label{res4(ii)}
 there are no tails $W'$ in $\T^3_{i,j}$ such that $Z\subseteq W'$ and 
 $\#(\term_{Z}\cap\term_{W'})=2.$
\end{enumerate}
 \end{Lem}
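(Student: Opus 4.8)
The plan is to prove both parts by contradiction, in the spirit of Lemmas~\ref{res1}--\ref{res3}: from a hypothetical bad configuration I build an auxiliary tail, control its number of terminal points via Lemma~\ref{connect}(iii), recognise it (or a suitable piece of it) as an element of $\S^2_{i,j}$, and then invoke Corollary~\ref{maxT2} (or, in one branch, Corollary~\ref{maxT3}) to produce a tail of $\T^2_{i,j}\cup\T^3_{i,j}$ sharing a terminal point with one of the tails at hand, contradicting either the $\T^2_{i,j}$-freeness built into $\S^3_{i,j}$ or the nestedness of $\T^2_{i,j}$. Throughout I use the following reductions. Since $k_Z=4\ne 3=k_W$ and $W\subseteq Z$, the inclusion is strict; writing $\term_W\cap\term_Z=\{a_1,a_2\}$ and letting $e$ be the remaining terminal point of $W$, one checks that $e\notin\term_Z$, that $e$ is the unique node where $W$ meets $Y:=\ol{Z\setminus W}$, and hence that $Y$ is a $3$-tail with $\term_Y=\{e\}\cup(\term_Z\setminus\{a_1,a_2\})$. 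Moreover, in part (i) the pair $(W',W'')$ is free because $W'\in\S^3_{i,j}$ is $\T^2_{i,j}$-free, so $\term_{W'}\cap\term_{W''}=\emptyset$; hence the node of $\term_Z\cap\term_{W'}$ and the node of $\term_Z\cap\term_{W''}$ are distinct and, together with $a_1$ and $a_2$, account for all of $\term_Z$.

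For part (ii) we have $W\subseteq Z\subseteq W'$ with $W,W'\in\T^3_{i,j}$ distinct, so $W\lhd W'$. Running the bookkeeping above on $W\subseteq Z$ and on $Z\subseteq W'$, one finds that $Y':=\ol{W'\setminus Z}$ is a $3$-tail, that $W$ and $Z$ each meet $Y'$ exactly in $\{a_1,a_2\}$ while $Y\cap Y'=\emptyset$, and therefore that $W\cup Y'$ is a $2$-tail with $\term_{W\cup Y'}=\{e,f\}$, where $f$ is the terminal point of $W'$ not lying in $\term_Z$. Since $C_i\cup C_j\subseteq W\subseteq W\cup Y'$ and $C_1\subseteq(W')^c\subseteq(W\cup Y')^c$, we get $W\cup Y'\in\S^2_{i,j}$, so Corollary~\ref{maxT2} yields a $(W\cup Y')$-terminal tail $V\in\T^2_{i,j}$; then $\term_V$ meets $\{e,f\}$. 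But $e\in\term_W$ and $f\in\term_{W'}$, so $\term_V$ meets $\term_W$ or $\term_{W'}$, contradicting that $W$ and $W'$ are $\T^2_{i,j}$-free. This proves (ii).

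For part (i) we have $W\subseteq Z\subseteq W'\wed W''=:G$, and from $W\lhd W'$, $W\lhd W''$, $(W',W'')$ free and $(k_{W'},k_{W''})=(3,2)$, Lemma~\ref{connect}(iii) applies: note $k_G\ge 1$ because $Z\subseteq G\subsetneq C$, and $k_{W'\cup W''}\ge 1$, else $C_1\subseteq(W')^c\subseteq W''$ would contradict $W''\in\T^2_{i,j}$; so $\{k_G,k_{W'\cup W''}\}=\{2,3\}$. If $k_G=2$, then, setting $\{b\}=\term_Z\cap\term_{W'}$ and $\{c\}=\term_Z\cap\term_{W''}$, one gets $\term_G=\{b,c\}$ from (\ref{term}) and Lemma~\ref{term-lem}(iii), so the tail $\ol{G\setminus Z}$ has all of its terminal points in $\{a_1,a_2\}\subseteq\term_W\subseteq W$; as $\ol{G\setminus Z}\subseteq Z^c$ while $(\ol{G\setminus Z})^c\supseteq Z\not\subseteq W$, Lemma~\ref{free-perf}(i), applied to a connected component of $\ol{G\setminus Z}$ if it is disconnected, gives a contradiction. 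If $k_G=3$, then $W'\cup W''$ is a $2$-tail with $C_i\cup C_j\subseteq W\subseteq W'\cup W''$ and $C_1\subseteq(W')^c\cap(W'')^c=(W'\cup W'')^c$, so $W'\cup W''\in\S^2_{i,j}$; Corollary~\ref{maxT2} provides a $(W'\cup W'')$-terminal tail $V\in\T^2_{i,j}$ with $\term_V$ meeting $\term_{W'}\cup\term_{W''}$. Meeting $\term_{W'}$ contradicts $\T^2_{i,j}$-freeness of $W'$, and meeting $\term_{W''}$ with $V\ne W''$ contradicts the nestedness of $\T^2_{i,j}$; the surviving possibility $V=W''$ is excluded by a direct terminal-point argument: if $W'\subseteq W''$ then the node $c$ would lie in $\term_{W'}\cap\term_{W''}=\emptyset$, a contradiction, while if $W'\not\subseteq W''$ one applies Corollary~\ref{maxT3} to the $3$-tail $G$ --- which contains $C_i\cup C_j$, avoids $C_1$, and satisfies $W\lhd G$ since $\term_W$ is disjoint from $\term_{W'}\cup\term_{W''}\supseteq\term_G$ --- and uses $\T^2_{i,j}$- and $\T^3_{i,j}$-freeness once more to force a terminal coincidence with $W'$, again a contradiction.

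The step I expect to be the main obstacle is precisely this last, degenerate sub-case of part (i) (the case $k_G=3$, $V=W''$, $W'\not\subseteq W''$): there $G$ fails to be $\T^2_{i,j}$-free, hence is not in $\S^3_{i,j}$, and extracting the contradiction requires combining Corollary~\ref{maxT3} with a careful description of the terminal points of $G$, $W''$ and $W'\cup W''$. The only other somewhat laborious ingredient is the terminal-point bookkeeping used throughout to identify the tails $Y$, $Y'$, $\ol{G\setminus Z}$, $W\cup Y'$ and $W'\cup W''$, to compute their terminal sets, and to check connectedness; this is routine given Lemmas~\ref{term-proof}--\ref{connect} and Lemma~\ref{free-perf}.
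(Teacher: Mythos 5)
Your part (ii) is correct and is essentially the paper's own argument (your $Y'=W'\wed Z^c$ and $W\cup Y'$ are exactly the paper's $X_1$ and $X_2$), and the $k_G=2$ branch of part (i) also closes (in fact more quickly: $\term_G=\{b,c\}$ would be a disconnecting proper subset of $\term_Z$, impossible since $Z$ is a tail). The genuine gap is the sub-case you yourself flagged: part (i), $k_G=3$, when the tail produced by Corollary \ref{maxT2} applied to $W'\cup W''$ is $W''$ itself and $W'\not\subseteq W''$. Your escape route via Corollary \ref{maxT3} applied to $G=W'\wed W''$ cannot ``force a terminal coincidence with $W'$'': by Lemma \ref{term-lem}(iii) one has $b,c\in\term_G$, so in particular $c\in\term_G\cap\term_{W''}$, i.e.\ $W''$ is \emph{already} a $G$-terminal element of $\T^2_{i,j}$. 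Corollary \ref{maxT3} only asserts the existence of some $G$-terminal tail in $\T^2_{i,j}\cup\T^3_{i,j}$, and that existence is witnessed by $W''$; nothing forces the witness to meet $\term_{W'}$, so no contradiction follows. A structural symptom of the problem is that this branch of your argument makes no essential use of the tail $W$: the configuration ``$Z$ a $4$-tail with $Z\subseteq W'\wed W''$ and $\#(\term_Z\cap\term_{W'})=\#(\term_Z\cap\term_{W''})=1$'' is not contradictory by itself, so the hypothesis on $W$ must enter the argument.

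The paper closes exactly this case with a sharper auxiliary curve that excludes $W''$ as the output of Corollary \ref{maxT2}. Writing $\term_G=\{b,c,K\}$ with $K\in(\term_{W'}\cup\term_{W''})\setminus\term_Z$, it sets $X_2:=G\wed Z^c$ and $X_3:=X_2\cup W$; by Lemma \ref{term-lem}(i)--(ii) and Lemma \ref{connect}, $X_2$ is a $3$-tail with $\term_{X_2}=\{a_1,a_2,K\}$ and $X_3$ is a $2$-tail with $\term_{X_3}=\{e,K\}$, where $e$ is the terminal point of $W$ outside $\term_Z$. Corollary \ref{maxT2} applied to $X_3$ gives $\wh W\in\T^2_{i,j}$ with $\wh W\subseteq X_3$ and $\term_{\wh W}\cap\{e,K\}\ne\emptyset$; since $b\in Z\subseteq W''$ while $b\notin X_3$, one gets $\wh W\ne W''$, and then every possible coincidence (at $e\in\term_W$, or at $K\in\term_{W'}\cup\term_{W''}$) contradicts either the $\T^2_{i,j}$-freeness of $W$ and $W'$ or the freeness of distinct elements of $\T^2_{i,j}$. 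Some device of this kind --- one that brings the third terminal point $e$ of $W$ into play and rules out $W''$ as the tail furnished by Corollary \ref{maxT2} --- is what your $k_G=3$ case is missing.
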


\begin{proof}
Write $\term_Z=\{R,S,T,U\}$ and $\term_W=\{R,S,V\}$, with $V$ different from $T$ and $U$.
Suppose there are $W'$ in $\T^3_{i,j}$ and $W''$ in $\T^2_{i,j}$ contradicting the statement of item (i). Notice that $W$ is different from $W'$ and $W''$.  Write
 $\term_{W'}=\{T,F,G\}$ and $\term_{W''}=\{U,H\}$, where $\{F,G,H\}\cap \term_Z$ is empty.
 Set $X_1:=W'\wed W''$, $X_2:=X_1\wed Z^c$ and $X_3:=X_2\cup W$. 
 
 We claim that $X_3$ is a 2-tail. It follows from item (iii) of Lemma \ref{term-lem}
  that $T$ and $U$ are terminal points of $X_1$, and from item (iii) of Lemma \ref{connect} that $X_1$ is a tail such that $k_{X_1}$ is $2$ or $3$.  Notice that the terminal points of $X_1$ are not $T$ and $U$, otherwise $\term_{X_1}$ would be a desconnecting subset of $\term_Z$ of cardinality 2,  which is not possible because $Z$ is a 4-tail. Therefore, $X_1$ is a 3-tail with $T$, $U$ and $K$ as terminal points, where $K$ is in $\{F,G,H\}$.  
 It follows from item (i) of Lemma \ref{term-lem} that the terminal points of $X_2$ are $R$, $S$ and $K$,  hence item (i) of Lemma \ref{connect} implies that $X_2$ is a 3-tail. Moreover, it follows from item (ii) of Lemma  \ref{term-lem} that the terminal points of $X_3$ are $V$ and $K$, and  
 item (ii) of Lemma \ref{connect} implies that $X_3$ is a 2-tail, concluding the proof of the claim. 
 
Notice that $C_i\cup C_j$ and $C_1$ are contained respectively in  $X_3$ and $X_3^c$. Since $X_3$ is a 2-tail, it follows from  Corollary \ref{maxT2} that there is a tail $\wh W$ in $\T^2_{i,j}$ contained in $X_3$ and such that
\begin{equation}\label{VK}
\term_{\widehat W}\cap \{V, K\}\ne\emptyset
\end{equation} Notice that $T\in \term_{X_1}\setminus\term_{X_2}$. In particular, $T$ is not in $X_2$, because $X_2$ is contained in $X_1$. Similarly, we have that $T$ is not in $W$. In this way,  we obtain 
 $T\in W''\setminus X_3\subseteq W''\setminus \widehat W$, hence $W''$ is different from $\wh W$. Thus,
  $(W'', \widehat W)$ is free, and hence it follows from (\ref{VK}) that either $(\widehat W, W)$ or $(\widehat W, W')$ is terminal, which is a contradiction. This completes the proof of the item (i).

Suppose now there is $W'$ in $\T^3_{i,j}$ contradicting the statement of item (ii).  Notice that $W$ is different form $W'$. Write $\term_{W'}=\{T,U,F\}$, with $F$ different from $R$ and $S$.  Define  $X_1:=W'\wedge Z^c$ and $X_2:=X_1\cup W$. 
It follows from item (i) of Lemma  \ref{term-lem} that $R$, $S$ and $F$ are the terminal points of $X_1$ and from item (i) of Lemma \ref{connect} that $X_1$ is a 3-tail. Using item (ii) of Lemma   \ref{term-lem}, we have that $V$ and $F$ are the terminal points of $X_2$, hence item (ii) of Lemma \ref{connect} implies that 
$X_2$ is a 2-tail.  Notice that $C_i\cup C_j$ and $C_1$ are contained respetively in $X_2$ and $X_2^c$, hence it follows from Corollary \ref{maxT2} that there is a $X_2$-terminal tail $\widehat W$ in $\T^2_{i,j}$.
We conclude that at least a pair between $(W, \widehat W)$ and $(W', \widehat W)$ is terminal, which is a contradiction.
\end{proof}

\begin{Lem}\label{res5} 
Let $Z$ be a $\T^2_{i,j}$-normalized tail of $C$ such that $k_Z$ is in $\{2,3,4\}$ and $C_i\cup C_j\subseteq Z^c$. Let $W$ be a tail in $\T^2_{i,j}\cup \T^3_{i,j}$ such that $Z\subseteq W$ and such that
$$
\#(\term_Z\cap\term_W)=
\begin{array}{ll}
\begin{cases}
1 & \text{ if } k_Z=2;\\
2 & \text{ if } k_Z \text{ is } 3 \text{ or }4.
\end{cases}
\end{array}
$$  
Then there is a $Z$-terminal tail $W'$ in $\T^2_{i,j}\cup\T^3_{i,j}$ contained in $Z^c$.
 \end{Lem}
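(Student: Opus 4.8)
The plan is to pass from the given tail $W$ (with $Z\subseteq W$) to the auxiliary subcurve $Y:=W\wed Z^c$ consisting of the components of $W$ that lie in $Z^c$, to apply the maximality statements of Corollaries \ref{maxT2} and \ref{maxT3} to $Y$, and then to transport the tail of $\T^2_{i,j}\cup\T^3_{i,j}$ so produced back across $Z$.

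First I would analyze $Y$. Since $\#(\term_Z\cap\term_W)$ is strictly less than $k_Z$ in each of the three cases, $Y$ is a nonempty proper subcurve; from $Z\subseteq W$ together with item (i) of Lemma \ref{term-lem} (applied to $Z^c$ and $W$) one gets $\term_Y=\Diff(\term_Z,\term_W)$, and a direct count with the prescribed value of $\#(\term_Z\cap\term_W)$ gives $k_Y\le 3$. Hence $k_Y\in\{1,2,3\}$, so item (i) of Lemma \ref{connect} (applied to the tails $Z^c$ and $W$) shows that $Y$ is a tail with $k_Y\in\{2,3\}$; in passing, $k_Y=3$ forces $k_Z\in\{2,4\}$ and $k_W=3$. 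Since $W\in\T^2_{i,j}\cup\T^3_{i,j}$ contains $C_i\cup C_j$ and misses $C_1$, while $C_i\cup C_j\subseteq Z^c$ by hypothesis, the tail $Y$ contains $C_i\cup C_j$ and $C_1\subseteq W^c\subseteq Y^c$; note also $C_1\subseteq Z^c$ since $Z\subseteq W$.

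Next I would apply Corollary \ref{maxT2} (if $k_Y=2$) or Corollary \ref{maxT3} (if $k_Y=3$) to $Y$, obtaining a $Y$-terminal $\widehat W\in\T^2_{i,j}\cup\T^3_{i,j}$. Suppose first $\widehat W\subseteq Y$ — automatic when $k_Y=2$, and also forced by Corollary \ref{maxT3} whenever $\widehat W$ is a $3$-tail. Then $\widehat W\subseteq Y\subseteq Z^c$ and $\widehat W\subseteq Y\subsetneq W$, so the pair $(\widehat W,W)$ is free (any two distinct elements of $\T^2_{i,j}\cup\T^3_{i,j}$ form a free pair); since $\widehat W$ meets $\term_Y=(\term_Z\setminus\term_W)\sqcup(\term_W\setminus\term_Z)$ but not $\term_W$, it must meet $\term_Z\setminus\term_W$, hence is $Z$-terminal, and $W':=\widehat W$ works.

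The remaining case is $\widehat W\not\subseteq Y$: then $k_Y=3$, $\widehat W$ is necessarily a $2$-tail $V\in\T^2_{i,j}$, and $k_W=3$, so $V\neq W$, the pair $(V,W)$ is free, and the preceding argument still shows $V$ is $Z$-terminal. Since $Z$ is $\T^2_{i,j}$-normalized, $(Z,V)$ is perfect; the alternatives $V\subseteq Z$ and $Z^c\subseteq V$ are impossible (they would give $C_i\cup C_j\subseteq V\cap Z^c$, respectively $C_1\subseteq Z^c\cap V^c$), and $V\subseteq Z^c$ is the desired conclusion, so everything reduces to excluding $Z\subseteq V$. In that situation the connectedness of $Z^c$ forces $V$ to possess a terminal point interior to $Z^c$, whence $\#(\term_Z\cap\term_V)=1$; then if $k_Z=2$ one has $\term_Z\subseteq\term_V\cup\term_W$ and $Z\subsetneq V\wed W$, which contradicts Lemma \ref{res3}, while if $k_Z=4$, item (iii) of Lemma \ref{term-lem} and item (iii) of Lemma \ref{connect} together force $W\wed V$ to be a $3$-tail with $\term_{W\wed V}=\term_Z\cap(\term_W\cup\term_V)\subseteq\term_Z$, so that $Z\subsetneq W\wed V$ and, $Z^c$ being connected, some terminal node of $W\wed V$ lies interior to $Z^c$, contradicting $\term_{W\wed V}\subseteq\term_Z$. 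The main obstacle is exactly this last step — keeping straight the overlap combinatorics of the three tails $Z$, $V$, $W$ and tracking which of their terminal nodes lie interior to $Z^c$; I would isolate the case $Z\subseteq V$, split it along $k_Z\in\{2,4\}$, and close each branch with the contradiction indicated.
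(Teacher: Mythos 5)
Your proposal is correct and follows essentially the same route as the paper's proof: you form $X:=Z^c\wed W$, show via item (i) of Lemma \ref{term-lem} and item (i) of Lemma \ref{connect} that it is a $2$- or $3$-tail containing $C_i\cup C_j$ and avoiding $C_1$, invoke Corollaries \ref{maxT2} and \ref{maxT3}, and in the leftover case of a $2$-tail of $\T^2_{i,j}$ not contained in $X$ you use the $\T^2_{i,j}$-normalized hypothesis and reduce to excluding $Z\subseteq V$. The only minor deviation is that for $k_Z=4$ you re-derive the needed contradiction directly (via item (iii) of Lemma \ref{term-lem}, item (iii) of Lemma \ref{connect} and connectedness of $Z^c$) where the paper simply cites Lemma \ref{res2}, while for $k_Z=2$ both arguments rely on Lemma \ref{res3}.
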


\begin{proof}
  Notice that if $k_W=2$, then $k_Z=2$. In fact, if $k_W=2$ and $k_Z$ is $3$ or $4$, then 
   $\#(\term_Z\cap\term_W)=2$, and hence $\term_W$ is a desconnecting subset of cardinality 2 of $\term_Z$, which is a contradiction, because $Z$ is a tail with $k_Z\ge3$.
  
    Set $\A:=\term_Z$, $\B:=\term_W$ and $X:=Z^c\wed W$. 
   It follows from item (i) of Lemma  \ref{term-lem} that $\term_X=\Diff(\A,\B)$, and hence $k_X$ is $2$ or $3$. Therefore,  item (i) of
   Lemma \ref{connect} implies that $X$ is a tail. Since $C_i\cup C_j$ and $C_1$ are contained respectively in $X$ and $X^c$,  it follows from  Corollaries \ref{maxT2} and \ref{maxT3} that there is a $X$-terminal tail  $W'$ in $\T^2_{i,j}\cup \T^3_{i,j}$. 
   Moreover, if $k_X=2$, we can choose $W'$ such that $k_{W'}=2$ and $W'\subseteq X$, while if $k_{W'}=3$, we can choose $W'$ such that $W'\subseteq X$.  
  
  We distinguish three cases. 
  In the first case, we have $k_W=2$. It follows that  $k_Z=2$, then $k_X=\#\Diff(\A,\B)=2$, and hence  $k_{W'}=2$ with $W'\subseteq X\subseteq Z^c$. In particular, $W$ is different from $W'$, and hence
  $(W,W')$ is free. Since $W'$ is $X$-terminal, using (\ref{term}) we see that $W'$ is $Z$-terminal, and we are done.  
  In the second case, we have $k_{W'}=3$. It follows that $W'\subseteq X\subseteq Z^c$, then $W$ is different form $W'$.  As in the first case, we see that $W'$ is $Z$-terminal, and we are done.
In the third case, we have $k_W=3$ and $k_{W'}=2$. Again, $W$ is different from $W'$, hence   as in the first case, we see that $W'$ is $Z$-terminal.
   Since $Z$ is $\T^2_{i,j}$-normalized, one of the following conditions holds 
   $$W'\subseteq Z, \,\,\, Z\subseteq W', \,\,\, Z^c\subseteq W', \,\,\, W'\subseteq Z^c.$$  
   The first possibility does not hold, because $C_i$ is contained in $\ol{W'\setminus Z}$. Assume that $Z\subseteq W'$: If $k_Z=4$, then we get a contradiction to Lemma \ref{res2}, while if $k_Z$ is $2$ or $3$, we get a contradiction to  Lemma \ref{res3}. The third possibility does not hold, because $C_1\subseteq W^c\subseteq Z^c$, while $C_1\subseteq (W')^c$. It follows that $W'\subseteq Z^c$, and we are done.
\end{proof}

\begin{Lem}\label{res6} 
Let $Z$ be a $\T^2_{i,j}$-normalized 3-tail of $C$ such that $C_i\cup C_j\cup C_1\subseteq Z^c$. Let $W$ be a $Z$-terminal tail in $\T^3_{i,j}$ contained in $Z^c$, with $\#(\term_Z\cap \term_W)=2$. Then there is a $Z$-terminal tail $W'$ in $\T^2_{i,j}$ such that $Z\subseteq W'$. 
 \end{Lem}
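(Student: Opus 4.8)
The plan is to imitate the proof of Lemma~\ref{res5}: rather than producing $W'$ directly, I would first assemble an auxiliary $2$-tail out of $Z$ and $W$ and then extract $W'$ from it via Corollary~\ref{maxT2}. Put $\A:=\term_Z$ and $\B:=\term_W$, so $\#\A=\#\B=3$ and $\#(\A\cap\B)=2$ by hypothesis, and set $X:=Z\cup W$. Since $W\subseteq Z^c$, item~(v) of Lemma~\ref{term-lem}, applied to the pair of subcurves $Z$ and $W$, gives $\term_X=\Diff(\A,\B)$; together with the cardinalities above this forces $k_X=\#\Diff(\A,\B)=2$. As $Z$ and $W$ are tails with $k_Z=k_W=3>1$, item~(ii) of Lemma~\ref{connect} then shows $X$ is in fact a $2$-tail.

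Next I would run Corollary~\ref{maxT2} on $X$. Since $W\in\S^3_{i,j}$ we have $C_i\cup C_j\subseteq W\subseteq X$, and $C_1$ is a component of neither $Z$ (by hypothesis) nor $W$ (again since $W\in\S^3_{i,j}$), so $C_1\subseteq(Z\cup W)^c=X^c$. Corollary~\ref{maxT2} thus produces an $X$-terminal tail $W'$ in $\T^2_{i,j}$ with $W'\subseteq X$. To upgrade this to ``$W'$ is $Z$-terminal'', note $\emptyset\ne\term_{W'}\cap\term_X\subseteq\term_{W'}\cap(\A\cup\B)$ because $\term_X=\Diff(\A,\B)\subseteq\A\cup\B$; and since $W\in\S^3_{i,j}$ is $\T^2_{i,j}$-free while $W'\in\T^2_{i,j}$, the pair $(W,W')$ is free, so $\term_{W'}\cap\B=\emptyset$ and hence $\term_{W'}\cap\A\ne\emptyset$, i.e.\ $W'$ is $Z$-terminal.

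It remains to improve ``$Z$-terminal'' to $Z\subseteq W'$. Because $Z$ is $\T^2_{i,j}$-normalized and $W'$ is a $Z$-terminal tail in $\T^2_{i,j}$, the pair $(Z,W')$ is perfect, so one of $Z\subseteq W'$, $W'\subseteq Z$, $Z^c\subseteq W'$, $W'\subseteq Z^c$ holds, and I would eliminate the last three. The inclusion $W'\subseteq Z$ is impossible since $C_i\subseteq W'$ (as $W'\in\T^2_{i,j}$) while $C_i\subseteq Z^c$; the inclusion $Z^c\subseteq W'$ is impossible since $C_1\subseteq Z^c$ while $C_1\subseteq(W')^c$. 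For $W'\subseteq Z^c$: combined with $W'\subseteq X=Z\cup W$ this forces every component of $W'$ to be a component of $W$, i.e.\ $W'\subseteq W$; then choosing $R\in\term_{W'}\cap\term_Z$ (nonempty, as $W'$ is $Z$-terminal) with, say, $C_{R,1}\subseteq W'$ and $C_{R,2}\subseteq(W')^c$, the fact that $C_{R,1}\subseteq W'\subseteq Z^c$ forces $C_{R,2}\subseteq Z\subseteq W^c$, so $R$ is also a terminal point of $W$, contradicting that $(W,W')$ is free. Hence $Z\subseteq W'$, and $W'$ is the desired tail.

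The genuinely delicate point is the exclusion of the case $W'\subseteq Z^c$: exactly as in Lemmas~\ref{res3} and~\ref{res5}, the idea is to locate the terminal node that $Z$ forces $W$ and $W'$ to share and thereby contradict the $\T^2_{i,j}$-freeness of the $3$-tail $W$. Everything else is bookkeeping with Lemmas~\ref{term-lem} and~\ref{connect} and Corollary~\ref{maxT2}; the real pivot of the argument is the computation $k_X=2$, which is what makes Corollary~\ref{maxT2} available.
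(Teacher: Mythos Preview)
Your proof is correct and follows the same strategy as the paper: form $X=Z\cup W$, check it is a $2$-tail (the paper cites item~(ii) of Lemma~\ref{term-lem} rather than item~(v), but either applies), invoke Corollary~\ref{maxT2} to extract a $Z$-terminal $W'\in\T^2_{i,j}$ with $W'\subseteq X$, and then use that $Z$ is $\T^2_{i,j}$-normalized to eliminate three of the four perfect-pair possibilities. The only real difference is in ruling out $W'\subseteq Z^c$: the paper simply applies Lemma~\ref{res1} to $Z^c$ (since then $W'\in\T^2_{i,j}$ and $W\in\T^3_{i,j}$ would both be $Z^c$-terminal and contained in $Z^c$), whereas you argue directly that $W'\subseteq Z^c$ together with $W'\subseteq X$ forces $W'\subseteq W$ and then exhibit a common terminal point of $W$ and $W'$---a valid and slightly more self-contained alternative.
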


\begin{proof}
Set $\term_Z=\{R,S,T\}$ and $\term_W=\{R,S,U\}$, with $U$ different form $T$.  
It follows from item (ii) of Lemma \ref{term-lem} that $T$ and $U$ are the terminal points of  $Z\cup W$, hence item (ii) of Lemma \ref{connect} implies that $Z\cup W$ is a 2-tail.  Since $C_i\cup C_j$ and $C_1$ are contained respectively in $Z\cup W$ 
and $(Z\cup W)^c$, it follows from 
Corollary \ref{maxT2} that there is a $(Z\cup W)$-terminal tail $W'$ in $\T^2_{i,j}$ such that $W'\subseteq Z\cup W$. Since $(W, W')$ is free, it follows from (\ref{term}) that $W'$ is $Z$-terminal.  By the hypothesis, $Z$ is $\T^2_{i,j}$-normalized, hence one of the following conditions holds $$W'\subseteq Z, \,\,\, Z^c\subseteq W', \,\,\, W'\subseteq Z^c, \,\,\, Z\subseteq W'.$$ The first and the second possibility do not hold, because  $C_i\cup C_j$ and $C_1$ are contained respectively in $\ol{W'\setminus Z}$ and $\ol{Z^c\setminus W'}$. The third one does not hold as well, as we can see by applying Lemma \ref{res1} to $Z^c$. Thus, we have $Z\subseteq W'$, and we are done.
\end{proof}

\begin{Lem}\label{res7} 
Let $Z$ be a $\T^2_{i,j}$-normalized tail of $C$ such that $k_Z$ is in $\{3,4\}$ and 
$C_i\cup C_j\cup C_1\subseteq Z^c$. Let $W$ in $\T^2_{i,j}$ and $W'$ in $\T^3_{i,j}$ be  such that $Z\subseteq W\wed W'$ and 
$$\#(\term_Z\cap \term_W)=\#(\term_Z\cap \term_{W'})=1.$$ Then there is a $Z$-terminal tail  $W''$ in $\T^2_{i,j}\cup\T^3_{i,j}$  contained in $Z^c$.
 \end{Lem}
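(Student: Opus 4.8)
The plan is to produce the required tail $W''$ as a member of $\T^2_{i,j}\cup\T^3_{i,j}$ that is terminal to a suitable auxiliary tail $X$ built out of $Z^c$, $W$ and $W'$, and then to promote ``$X$-terminal'' to ``$Z$-terminal'' and to ``$W''\subseteq Z^c$''. First I fix notation: write $\term_Z\cap\term_W=\{R\}$, $\term_W=\{R,F\}$, $\term_Z\cap\term_{W'}=\{S\}$, $\term_{W'}=\{S,G,H\}$. Since $W'\in\T^3_{i,j}$ lies in $\S^3_{i,j}$ it is $\T^2_{i,j}$-free, so $(W,W')$ is free; hence $R,S,F,G,H$ are pairwise distinct and $F,G,H\notin\term_Z$. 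Moreover $Z\subseteq W\wed W'\subseteq W\cap W'$, $C_i\cup C_j\subseteq W\cap W'$, and $C_1\subseteq W^c\cap(W')^c$ (using $W\in\S^2_{i,j}$, $W'\in\S^3_{i,j}$). As $Z$ is nonempty and $C_1$ lies in both $(W\wed W')^c$ and $(W\cup W')^c$, both $W\wed W'$ and $W\cup W'$ are proper nonempty subcurves, so item (iii) of Lemma~\ref{connect} applies to the free pair $(W,W')$ with $(k_W,k_{W'})=(2,3)$: $W\wed W'$ and $W\cup W'$ are tails, $k_{W\wed W'},k_{W\cup W'}>1$, and $k_{W\wed W'}+k_{W\cup W'}=5$, so $k_{W\wed W'}\in\{2,3\}$.

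Next I set $X:=Z^c\wed(W\wed W')$. Item (iii) of Lemma~\ref{term-lem} applied to the tail $Z\subseteq W\wed W'$ gives $\{R,S\}\subseteq\term_{W\wed W'}$, while Lemma~\ref{term-proof} gives $\term_{W\wed W'}\subseteq\term_W\cup\term_{W'}$; hence $\term_Z\cap\term_{W\wed W'}=\{R,S\}$. Since $(Z^c)^c=Z\subseteq W\wed W'$, item (i) of Lemma~\ref{term-lem} yields $\term_X=\Diff(\term_Z,\term_{W\wed W'})=(\term_Z\setminus\{R,S\})\sqcup(\term_{W\wed W'}\setminus\{R,S\})$, so $k_X=(k_Z-2)+(k_{W\wed W'}-2)\in\{1,2,3\}$. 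Since $Z^c$ and $W\wed W'$ are tails with $k_{Z^c}=k_Z\ge3>1$ and $k_{W\wed W'}>1$, item (i) of Lemma~\ref{connect} upgrades this to: $X$ is a tail and $k_X\in\{2,3\}$. Clearly $C_i\cup C_j\subseteq X$ and $C_1\subseteq X^c$.

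The crucial step, and the one I expect to be the main obstacle, is the \emph{claim} that $W$ is not $X$-terminal. Because $R\notin\term_X$ and $F\notin\term_Z$, $W$ is $X$-terminal if and only if $F\in\term_{W\wed W'}$; and if that held, then (as $R,S,F$ are distinct and $k_{W\wed W'}\le3$) we would get $\term_{W\wed W'}=\{R,S,F\}$, $k_{W\cup W'}=2$, and $\term_{W\cup W'}=(\term_W\cup\term_{W'})\setminus\{R,S,F\}=\{G,H\}\subseteq\term_{W'}$. Then $W\cup W'$ is a $2$-tail with $C_i\cup C_j\subseteq W\cup W'$ and $C_1\subseteq(W\cup W')^c$, so Corollary~\ref{maxT2} yields a $(W\cup W')$-terminal tail $\widehat W\in\T^2_{i,j}$ whose terminal set meets $\{G,H\}\subseteq\term_{W'}$; thus $(\widehat W,W')$ is terminal, contradicting that $W'$ is $\T^2_{i,j}$-free. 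This proves the claim. (It is this claim that will let us conclude $W''\ne W$ below when the tail produced is a $2$-tail not contained in $X$; note that Lemmas~\ref{res2} and~\ref{res3}, which served this role in earlier arguments, are unavailable here because $\#(\term_Z\cap\term_{W'})=1$.)

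Finally I produce $W''$. If $X$ is a $2$-tail, Corollary~\ref{maxT2} gives an $X$-terminal $W''\in\T^2_{i,j}$ with $W''\subseteq X$; if $X$ is a $3$-tail, Corollary~\ref{maxT3} gives an $X$-terminal $W''\in\T^2_{i,j}\cup\T^3_{i,j}$ with $W''\subseteq X$ whenever $k_{W''}=3$. In every case $W''\ne W$ (by the claim when $k_{W''}=2$, and because $W''\subseteq Z^c$ or $W''\in\T^3_{i,j}$ otherwise) and $W''\ne W'$. I then check that $W''$ is $Z$-terminal: a node $P\in\term_{W''}\cap\term_X$ lies either in $\term_Z\setminus\{R,S\}$, so $W''$ is $Z$-terminal, or in $\term_{W\wed W'}\setminus\{R,S\}\subseteq\term_W\cup\term_{W'}$, which would make one of $(W'',W)$, $(W'',W')$ terminal --- impossible since all three tails lie in $\T^2_{i,j}\cup\T^3_{i,j}$ and $W''\notin\{W,W'\}$. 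For the inclusion $W''\subseteq Z^c$: this is immediate when $W''\subseteq X$; when $k_{W''}=2$, use that $Z$ is $\T^2_{i,j}$-normalized, so $(Z,W'')$ is perfect, and rule out $W''\subseteq Z$ (by $C_i\subseteq Z^c$), $Z^c\subseteq W''$ (by $C_1\subseteq Z^c$), and $Z\subseteq W''$ by comparing $W''$ with $W$ inside the chain $\T^2_{i,j}$: since $Z\subseteq W\cap W''$, whether $W\subseteq W''$ or $W''\subseteq W$ one exhibits (using $Z\subseteq W$ and that $W''$ is $Z$-terminal) a node in $\term_W\cap\term_{W''}$, contradicting that distinct members of $\T^2_{i,j}$ are free. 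Hence $W''\subseteq Z^c$, which completes the proof.
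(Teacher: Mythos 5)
Your proof is correct and follows essentially the same route as the paper's: you build the same auxiliary tail $X=Z^c\wed(W\wed W')$ (the paper's $X'$), extract an $X$-terminal member of $\T^2_{i,j}\cup\T^3_{i,j}$ via Corollaries \ref{maxT2} and \ref{maxT3}, transfer terminality to $Z$, and conclude with $\T^2_{i,j}$-normalization together with the chain structure of $\T^2_{i,j}$. The only real divergence is local: where the paper excludes $W''=W$ by observing that $W$ being terminal to the auxiliary curve would force $\term_W\subsetneq\term_{W\wed W'}$, impossible since $W\wed W'$ is a tail, you instead prove outright that $W$ is never $X$-terminal by applying Corollary \ref{maxT2} to $W\cup W'$ and contradicting the $\T^2_{i,j}$-freeness of $W'$ --- a valid alternative for that step.
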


\begin{proof}
Set $X:=W\wed W'$ and $X':=X\wed Z^c$. Write
$$
\term_Z=\{R, S, T\} \text{ if } k_Z=3 \quad \text{ and \;\;} \term_Z=\{R, S, T, U\} \text{ if } k_Z=4 
$$
where $\term_W=\{R,V\}$ and $V$ is not a terminal point of $Z$, and where the intersection of 
$\term_Z$ and $\term_{W'}$ consists of  $S$.

 It follows from items (iii) of Lemmas \ref{term-lem} and  \ref{connect} that $X$ is a tail with $R$ and $S$ as terminal points and such that $k_X$ is $2$ or $3$. In particular, $k_X=3$, otherwise $\{R,S\}$ would be a proper desconnecting subset of $\term_Z$, which is a contradiction. Thus, we can write $\term_X=\{R, S, K\}$,  where $K$ is in $\term_W\cup\term_{W'}$ and $K$ is different from $R$ and $S$.

 Using again item (i) of Lemma \ref{term-lem} we have that $k_{X'}$ is $2$ or $3$.
 Therefore, it follows from item (i) of Lemma \ref{connect} that $X'$ is a tail.  Notice that 
 $C_i\cup C_j$ and $C_1$ are contained respectively in $X'$ and $(X')^c$, hence Corollaries \ref{maxT2} and \ref{maxT3} imply the existence of a $X'$-terminal tail $W''$ in $\T^2_{i,j}\cup \T^3_{i,j}$. Moreover, if $k_{X'}=2$ or $k_{W''}=3$, we can choose $W''$ such that $W''\subseteq X'$, and hence  we are done in this case. Thus, we can assume $k_{X'}=3$ and $k_{W''}=2$. In particular, we have $k_Z=4$ and that $T$, $U$, $K$ are the terminal points of $X'$.

Notice that $W$ is different from $W''$, otherwise $W$ should be $X'$-terminal, and hence $V=K$, which implies that $\term_W\subsetneq\term_X$: This is a contradiction because $X$ is a tail. In particular, either  
 $W\lhd W''$, or $W''\lhd W$, because $W$ and $W''$ are tails in $\T^2_{i,j}$. 
Since $W''$ is $X'$-terminal and $(W,W'')$ and $(W',W'')$ are free, it follows that $W''$ is $Z$-terminal. The tail $Z$ is $\T^2_{i,j}$-normalized, then one of the following conditions holds 
$$Z\subseteq W'', \,\,\, W''\subseteq Z, \,\,\, Z^c\subseteq W'', \,\,\, W''\subseteq Z^c.$$ 
If $Z\subseteq W''$, then $Z\subseteq W\wed W''$, and hence either $Z\subseteq W\lhd W''$, or $Z\subseteq W''\lhd W$, which is a contradiction because $W$ and $W''$ are $Z$-terminal.
The second and the third case do not hold,  because $C_i\cup C_j$ and $C_1$ are contained respectively in $\ol{W''\setminus Z}$ and $\ol{Z^c\setminus W''}$. It follows that $W''\subseteq Z^c$, and we are done.
\end{proof}

\section{Extending the degree 2 Abel--Jacobi map}\label{ext-sec}

\noindent
Throughout this section, we fix a regular local smoothing $f\: \C\ra B$ of a nodal curve $C$ with irreducible components $C_1,\dots, C_p$, where $B$ is the spectrum of a Henselian DVR with residue field $k$ and quotient field $K$. We let $\mathcal E$ be the canonical polarization on $\C/B$ and $\sigma\:  B\ra \C$ be a section of $f$ through the $B$-smooth locus of $\C$ such that $\sigma(Spec(k))$ is contained in $C_1$.  Moreover, 
 for $(i,j)$ in $\{1,\dots,p\}^2$, let $\T_{i,j}$ be the set of nested tails of $C$ with respect to $(i,j)$  defined in (\ref{nestik}); we set  $\Sigma:=\sigma(B)$ and
\begin{equation}\label{LDEF}
\O_{\T_{i,j}}:=\O_{\C}\left(-\sum_{Z\in \T_{i,j}}Z\right)\otimes\O_C
\end{equation}

 Let $J_{\C_K}$ be the Jacobian of the generic fiber $\C_K$ of $f$. 
Recall that the \emph{N\'eron model of $J_{\C_K}$} is a $B$-scheme $N(J_{\C_K})$, smooth and separated over $B$, whose generic fiber is isomorphic to  $J_{\C_K}$ and uniquely determined by the following universal property (the \emph{N\'eron mapping property}): for every $B$-smooth scheme $Z$ with generic fiber $Z_K$ and for every $K$-morphism $u_K\:  Z_K\ra J_{\C_K}$, there is a unique extension of $u_K$ to a morphism $u\:  Z\ra N(J_{\C_K})$ (for more details on N\'eron models, see \cite{BLR}). 

The Jacobian $J_{\C_K}$ is an open subset of Esteves's compactified Jacobian $J^\sigma_{\mathcal E}$ introduced in Section \ref{intro-sec}. The following result, due to  Busonero, Kass and Melo--Viviani, states a relationship between $J^\sigma_{\mathcal E}$ and $N(J_{\C_K})$.

\begin{Thm}\label{Ner}
The $B$-smooth locus of $J^\sigma_{\mathcal E}$ is isomorphic to the N\'eron model of $J_{\C_K}$.
\end{Thm}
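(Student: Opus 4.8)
This is the theorem of \cite{B}, \cite{K}, \cite{MeVi}; the plan is to recall its proof, which follows Caporaso's strategy \cite{CAJM} adapted to Esteves's construction. The first step is to identify the $B$-smooth locus of $J^\sigma_{\mathcal E}$ explicitly. Let $\dot J^\sigma_{\mathcal E}\subseteq J^\sigma_{\mathcal E}$ be the open subscheme parametrizing torsion-free rank $1$ sheaves $\I$ on $\C/B$ whose restriction $\I|_C$ is invertible. It is standard (see \cite{E01}, \cite[\S2.3]{CCE}) that $J^\sigma_{\mathcal E}\to B$ is smooth precisely along $\dot J^\sigma_{\mathcal E}$: the obstruction to deforming an invertible sheaf on the curve $C$ lies in $H^2(C,\O_C)=0$, so the fibers are smooth of dimension $g$ over $\dot J^\sigma_{\mathcal E}$, whereas over a point where $\I|_C$ is non-locally-free at $\delta\ge1$ nodes the fiber is singular; since $\C$ is regular, $J^\sigma_{\mathcal E}$ is regular and $\dot J^\sigma_{\mathcal E}$ is exactly its $B$-smooth locus. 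Hence it suffices to prove that $\dot J^\sigma_{\mathcal E}$ is the N\'eron model of $J_{\C_K}$. Now $\dot J^\sigma_{\mathcal E}$ is smooth over $B$ by construction, separated and of finite type over $B$ because it is open in the proper $B$-scheme $J^\sigma_{\mathcal E}$, and its generic fiber is $J_{\C_K}$: over $K$ the curve $\C_K$ is smooth and integral, so every torsion-free rank $1$ sheaf on it is invertible and the $\sigma_K$-quasistability condition is vacuous.

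The second step is to verify the N\'eron mapping property for $\dot J^\sigma_{\mathcal E}$. Uniqueness of extensions is immediate: any smooth $B$-scheme $Z$ is $B$-flat, hence $Z_K$ is schematically dense in $Z$, while $\dot J^\sigma_{\mathcal E}$ is separated over $B$. For existence, N\'eron's smoothening theory \cite[Ch.~1--3]{BLR} reduces the statement to the trait case: it is enough to show that for every discrete valuation ring $R'$ which is \'etale over $R$, with fraction field $K'$, every point $x\in J_{\C_K}(K')$ extends to an $R'$-point of $\dot J^\sigma_{\mathcal E}$. Such an $x$ corresponds to a line bundle $L$ on the smooth curve $\C_{K'}$. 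Since $\C_{R'}:=\C\times_B\Spec R'$ is regular, $L$ extends to a line bundle on $\C_{R'}$, unique up to twisting by $\O_{\C_{R'}}(\sum_i a_iC_i)$ for integers $a_i$. By Esteves's combinatorial theorem \cite{E01}, exactly one of these twists restricts to a $C_1$-quasistable line bundle on $C$ — equivalently, the $C_1$-quasistable multidegrees of total degree $0$ form a complete set of representatives for the action, by twisting, of the sheaves $\O_\C(C_i)|_C$ on the set of degree-$0$ multidegrees. This distinguished twist is a $\sigma_{R'}$-quasistable line bundle on $\C_{R'}$, the only non-generic point of $\Spec R'$ being its closed point, where quasistability holds by construction; it is therefore the desired $R'$-point of $\dot J^\sigma_{\mathcal E}$ above $x$.

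The main obstacle is concentrated in this last step and has two parts. One is the descent of the full N\'eron mapping property to the trait case, which rests on N\'eron's smoothening together with the characterization of the N\'eron model as the minimal smooth separated model of finite type with the trait-extension property, invoked from \cite{BLR}. The other, more intrinsic, is the fundamental-domain statement used to produce and to single out the twist: this is precisely what makes the locus of $C_1$-quasistable line bundles a \emph{separated} model, as opposed to merely an open subscheme of the non-separated relative Picard scheme $\Pic_{\C/B}$, and it is the abstract counterpart of the explicit computation carried out for the bundle $\O_C(2P-Q-Q')$ in the rest of this paper. Alternatively, one can quote Raynaud's theorem \cite[\S9.5]{BLR} that a suitable separated open subscheme $\Pic^{[0]}_{\C/B}$ of $\Pic_{\C/B}$ is the N\'eron model of $J_{\C_K}$ when the special fiber is reduced, and then identify $\dot J^\sigma_{\mathcal E}$ with $\Pic^{[0]}_{\C/B}$ via the universal sheaf on $J^\sigma_{\mathcal E}$; this identification, however, again relies on the same fundamental-domain property.
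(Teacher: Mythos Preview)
The paper's own proof of this theorem is a bare citation: ``See \cite{B}, \cite[Theorem A]{K} and \cite[Theorem 3.1]{MeVi}.'' No argument is reproduced. Your proposal therefore does not match the paper's approach---there is no approach to match---but instead supplies a sketch of the content of those references.

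As a sketch it is broadly accurate: the identification of the $B$-smooth locus with the locus of invertible sheaves, and the verification of the extension property for \'etale traits via existence and uniqueness of a $C_1$-quasistable twist, are indeed the two pillars of the cited proofs. Two points deserve caution. First, the claim that $J^\sigma_{\mathcal E}$ is regular because $\C$ is regular is neither obvious nor necessary here; the $B$-smooth locus is already pinned down by your fibrewise deformation argument, and you should drop the regularity assertion. Second, the reduction of the full N\'eron mapping property to the trait case is more delicate than a one-line appeal to smoothening: in the cited papers the argument typically runs the other way, first using the N\'eron mapping property of the target $N(J_{\C_K})$ (which exists a priori since $J_{\C_K}$ is an abelian variety) to produce a canonical morphism $\dot J^\sigma_{\mathcal E}\to N(J_{\C_K})$, and then checking that this map is an isomorphism on special fibers, where the fundamental-domain statement for $C_1$-quasistable multidegrees gives the bijection on components. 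Your alternative via Raynaud's $\Pic^{[0]}_{\C/B}$ is also legitimate and closer to \cite{MeVi}. For the purposes of matching the paper, however, the correct response is simply to cite the three references.
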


\begin{proof}
See \cite{B}, \cite[Theorem A]{K} and  \cite[Theorem 3.1]{MeVi}.
\end{proof}

Let $\dot{\C}^2$ be the $B$-smooth locus of $\C\times_B\C$.
Since $\dot\C^2$ is $B$-smooth, combining the N\'eron mapping property and Theorem \ref{Ner}, we obtain a natural extension
\begin{equation}\label{A-N}
\alpha^2_{\I_{\Sigma|\C}^{-2}}\:  \dot{\C}^2\lra J^\sigma_{\mathcal E}
\end{equation}
 of the degree 2 Abel--Jacobi map $\alpha^2_{\I_{\Sigma|\C}^{-2},K}$ defined in (\ref{AJ-generic}).  As in \cite{CE}, we call $\alpha^2_{\I_{\Sigma|\C}^{-2}}$ the \emph{Abel--N\'eron} map. Although the definition of the Abel--N\'eron map is natural, it is not explicit. To get a modular description of $\alpha^2_{\I_{\Sigma|\C}^{-2}}$, we need the following Lemma.

\begin{Prop}\label{quasistab}
Let $C$ be a nodal curve with irreducible components $C_1,\dots,C_p$. Fix smooth points $P$, $Q$ and $Q'$ of $C$ contained respectively in $C_1$, $C_i$ and $C_j$, where $(i,j)$ is in $\{1,\dots,p\}^2$. Then $\O_C(2P-Q-Q')\otimes \O_{\T_{i,j}}$  
is $C_1$-quasistable if and only if  $\O_C(2P-Q-Q')\otimes \O_{\T_{i,j}}$ is $C_1$-quasistable at every $\T_{i,j}$-normalized tail of $C$.
\end{Prop}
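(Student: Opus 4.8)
The plan is to prove the (nontrivial) ``if'' direction by a two-stage reduction: first from all subcurves to all tails, then from all tails to $\T_{i,j}$-normalized tails. Throughout, write $I:=\O_C(2P-Q-Q')\otimes\O_{\T_{i,j}}$, an invertible sheaf of degree $0$, and assume $I$ is $C_1$-quasistable at every $\T_{i,j}$-normalized tail; the ``only if'' direction is immediate, since a $C_1$-quasistable invertible sheaf is $C_1$-quasistable at every subcurve, in particular at every $\T_{i,j}$-normalized tail.

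For the first reduction I would use that, $I$ being invertible, $C_1$-quasistability at the connected components of a subcurve implies it at that subcurve, so I may restrict to connected $Z$; if moreover $Z^c$ is connected then $Z$ is a tail. If $Z^c=Y_1\cup\cdots\cup Y_r$ has $r\ge 2$ connected components, then, $Z$ being connected, each $Y_s$ shares a node with $Z$, so every $Y_t$ and every $Y_t^c=Z\cup\bigcup_{s\ne t}Y_s$ is a tail, with $\term_{Z^c}=\bigsqcup_t\term_{Y_t}$. Semistability of $I$ at $Z$ is equivalent to semistability at $Z^c$, which follows from semistability at the tails $Y_t$ by the components rule just recalled; and when $C_1\subseteq Z$ one has $C_1\subseteq Y_t^c$ for every $t$, and, using $\deg_{Y_t}I=-\deg_{Y_t^c}I$ and $k_Z=\sum_t k_{Y_t}=\sum_t k_{Y_t^c}$,
\[
\beta_I(Z)=\deg_Z I+\tfrac{k_Z}{2}=\sum_t\Bigl(\deg_{Y_t^c}I+\tfrac{k_{Y_t^c}}{2}\Bigr)=\sum_t\beta_I(Y_t^c)>0,
\]
each summand being positive because $Y_t^c$ is a tail containing $C_1$. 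Hence it suffices to prove that $I$ is $C_1$-quasistable at every tail of $C$.

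For the second reduction I would induct on a complexity of the tail $Z$ --- e.g.\ the pair consisting of $k_Z$ and the number of $Z$-terminal tails in $\T_{i,j}$ that are not perfect with $Z$. If $Z$ is $\T_{i,j}$-normalized, apply the hypothesis. Otherwise fix a $Z$-terminal $W\in\T_{i,j}$ with $(Z,W)$ not perfect and bring in Section~\ref{sec5}: Lemmas~\ref{res1}--\ref{res7} pin down exactly which $Z$-terminal tails of $\T^2_{i,j}$ and $\T^3_{i,j}$ can occur, and manufacture from an offending $W$ certain ``normalizing'' $Z$-terminal tails $W'$ in $\T^2_{i,j}\cup\T^3_{i,j}$ that are contained in $Z^c$ or that contain $Z$. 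The arithmetic feeding into this is: $\O_\C(-W)|_C$ has multidegree $0$ off the boundary components of $W$ and $\mp 1$ on each of them, so $\deg_Z\!\bigl(\O_\C(-W)|_C\bigr)$ depends only on $\term_Z\cap\term_W$ and equals $\mp\#(\term_Z\cap\term_W)$ when $(Z,W)$ is perfect; these signs match the multidegree of $\O_C(2P-Q-Q')$ precisely because every $W\in\T_{i,j}$ satisfies $C_i\cup C_j\subseteq W$ and $C_1\subseteq W^c$; and, by multidegree additivity together with Lemma~\ref{term-proof}, $\beta_I(A\wed B)+\beta_I(A\cup B)=\beta_I(A)+\beta_I(B)$ for every free pair $(A,B)$ of subcurves. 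Combining these, one rewrites $\deg_Z I_Z$ and $\beta_I(Z)$ in terms of the same invariants at tails of strictly smaller complexity --- eventually at $\T_{i,j}$-normalized tails, where the hypothesis applies --- and deduces $|\deg_Z I_Z|\le k_Z/2$, with $\beta_I(Z)>0$ when $C_1\subseteq Z$.

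The hard part is this second reduction: it demands a careful case analysis organized by $k_Z$, by which of $C_1,C_i,C_j$ lie in $Z$, and by whether the offending tail $W$ belongs to $\T^1_{i,j}$, $\T^2_{i,j}$ or $\T^3_{i,j}$, together with the verification that each replacement tail supplied by Lemmas~\ref{res1}--\ref{res7} is genuinely simpler and creates no new failure of perfectness, so that the induction terminates. The example following Proposition~\ref{2tail} indicates why the $\T^2_{i,j}$-freeness imposed in the definition of $\S^3_{i,j}$ and the freeness hypotheses in item (iii) of Lemma~\ref{connect} cannot be relaxed at this stage.
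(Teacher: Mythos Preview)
Your first reduction (from all subcurves to all tails) is correct and is essentially the content of the paper's First and Second Steps, though the paper runs it only inside the class of $\T^3_{i,j}$-normalized subcurves; the additivity $\beta_I(Z)=\sum_t\beta_I(Y_t^c)$ is right.

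The second reduction, however, has a genuine gap. You invoke Lemmas~\ref{res1}--\ref{res7} to pass from an arbitrary tail to $\T_{i,j}$-normalized ones, but those lemmas are not built for that: they are used in the proof of Theorem~\ref{main1} to bound $t_Z^+$ and $t_Z^-$ for tails $Z$ that are \emph{already} $\T_{i,j}$-normalized (note the standing hypothesis ``$\T^2_{i,j}$-normalized'' in \ref{res5}--\ref{res7}), not to normalize a given tail. Moreover, your additivity formula $\beta_I(A\wedge B)+\beta_I(A\cup B)=\beta_I(A)+\beta_I(B)$ applies only to \emph{free} pairs, whereas the offending pair $(Z,W)$ is by definition terminal, so that identity is unavailable precisely where you need it; and there is no mechanism in your sketch ensuring that the replacement subcurves are tails of strictly smaller ``complexity''.

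The paper's argument is quite different. First (Fifth Step, via Lemma~\ref{free-perf}) one observes that \emph{every} tail is automatically $(\T^1_{i,j}\cup\T^2_{i,j})$-normalized, so the only obstruction comes from $\T^3_{i,j}$. Then (Third and Fourth Steps) one takes a \emph{maximal} subcurve $Z_0$ where quasistability fails, picks the maximal $\widehat W\in\T^3_{i,j}$ with $(Z_0,\widehat W)$ terminal and not perfect, and uses the general inequality $\beta_L(Z_0)+\beta_L(\widehat W^c)\ge\beta_L(\widehat W^c\cup Z_0)+\beta_L(\widehat W^c\wedge Z_0)$ from \cite[Lemma~3]{E01}. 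The two ingredients that close the argument, and that your proposal is missing, are: (a) a direct proof, using the nestedness of $\T^3_{i,j}$ and the maximality of $\widehat W$, that $\widehat W^c\wedge Z_0$ is $\T^3_{i,j}$-normalized (so the Second Step applies to it even though it need not be a tail); and (b) the explicit computation $\beta_L(\widehat W^c)=\deg_{\O_C(2P-Q-Q')}(\widehat W^c)-k_{\widehat W}/2=2-3/2=1/2$, which is the one place where the specific shape of $L$ and the fact $k_{\widehat W}=3$ enter. Without (b) the inequality does not yield $\beta_L(Z_0)>0$, and no amount of Section~\ref{sec5} combinatorics replaces it.
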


\begin{proof}
The ``only if'' part of the statement is trivial. Let us prove the ``if'' part of the statement.  
Set $L:=\O_C(2P-Q-Q')\otimes \O_{\T_{i,j}}$. 

\smallskip

\emph{First Step}.
Suppose that $Z$ is a  $\T^3_{i,j}$-normalized subcurve of $C$ with connected components $Z_1,\dots,Z_c$, where $c\ge 1$. 

For every $s$ in $\{1,\dots,c\}$, we claim that $Z_s$ is $\T^3_{i,j}$-normalized. 
Indeed, assume that $Z_s$ is $Z'$-terminal, where $Z'$ is in $\T^3_{i,j}$.   Then $Z$ is  $Z'$-terminal, and hence one of the following conditions holds
$$Z'\subseteq Z,\,\,\,   Z^c\subseteq Z',  \,\,\, Z\subseteq Z', \,\,\, Z'\subseteq Z^c.$$
If $Z'\subseteq Z$ (respectively $Z^c\subseteq Z'$), then either $Z'\subseteq Z_s$ or $Z'\subseteq Z_s^c$ (respectively either $(Z')^c\subseteq Z_s$ or $(Z')^c\subseteq Z_s^c$), because $Z'$ is a tail. If $Z\subseteq Z'$ (respectively $Z'\subseteq Z^c$), then $Z_s\subseteq Z'$ (respectively $Z'\subseteq Z_s^c$).  In any case, $(Z_s, Z')$ is perfect.

\smallskip
\emph{Second Step.} Suppose that $L$ is $C_1$-quasistable at every $\T^3_{i,j}$-normalized tail of $C$. We claim that $L$ is $C_1$-quasistable at every $\T^3_{i,j}$-normalized subcurve of $C$.

By contradiction, assume that $L$ is not $C_1$-quasistable at a $\T^3_{i,j}$-normalized subcurve $Z$. In particular, $Z$ is not a tail and $L$ is not $C_1$-quasistable at least at one connected component of $Z$. If $Z_1,\dots, Z_c$ are the connected components of $Z$,  where $c\ge 1$, then we can assume that $L$ is not $C_1$-quasistable at $Z_1$. It follows from the first step that $Z_1$ is $\T^3_{i,j}$-normalized.
Let $Y_1,\dots,Y_d$ be the connected components of $Z_1^c$, where $d\ge 1$. Notice that $Y_1,\dots,Y_d$ are tails of $C$.  Since $Z_1$ is $\T^3_{i,j}$-normalized and connected and since $L$ is not $C_1$-quasistable at $Z_1$, we have that $Z_1^c$ is $\T^3_{i,j}$-normalized and 
$d\ge 2$. In particular,  it follows again from the first step that $Y_t$ is $\T^3_{i,j}$-normalized,
 for every $t$ in $\{1,\dots, d\}$.  Since $Y_t$ is a tail of $C$,  we get that $L$ is $C_1$-quasistable at $Y_t$, for every $t$ in $\{1,\dots, d\}$, which implies that $L$ is $C_1$-quasistable at $Z_1^c$. It follows that $L$ is semistable and not $C_1$-quasistable at $Z_1$, and hence $\beta_L(Z_1)=0$ and $C_1\subseteq Z_1$. 
 Recall that $Y_1^c$ is a tail and $Y_1^c=Z_1\cup Y_2\dots\cup Y_d$. Since  
 $Y_1$ is $\T^3_{i,j}$-normalized, we get that $Y_1^c$ is $\T^3_{i,j}$-normalized. In particular, $L$ is $C_1$-quasistable at $Y_1^c$, and  hence $\beta_{L}(Y^c_1)>0$, because $C_1\subseteq Y^c_1$.  
 
 On the other hand, the condition $\beta_L(Z_1)=0$ implies    
$\deg_L(Z_1^c)=k_{Z_1}/2$. Since $Y_1,\dots,Y_d$ are the connected components of $Z_1^c$, it follows that $\sum_{t=1}^d \deg_LY_t=\sum_{t=1}^d k_{Y_t}/2$. Since $L$ is $C_1$-quasistable at every $Y_t$, we have $\deg_L(Y_t)\le k_{Y_t}/2$, hence necessarily  
$\deg_L(Y_t)=k_{Y_t}/2$, for every $t$ in $\{1,\dots, d\}$.  In this way we obtain
$$\beta_L(Y^c_1)=\deg_L(Z_1)+\sum_{t=2}^d \deg_L(Y_t)+k_{Z_1}/2-\sum _{t=2}^d k_{Y_t}/2$$$$=\beta_L(Z_1)+\sum_{t=2}^d (\deg_L(Y_t)-k_{Y_t}/2)=0$$
which is a contradiction.

\smallskip

\emph{Third Step}. Suppose that $L$ is $C_1$-quasistable at every $\T^3_{i,j}$-normalized tail of $C$. 
We want to show that $L$ is $C_1$-quasistable at every subcurve of $C$ containing $C_1$.

Assume by contradiction that there are subcurves of $C$ containing $C_1$ at which $L$ is not $C_1$-quasistable. Let $Z_0\subsetneq C$ be a maximal proper subcurve containing $C_1$ and such that $\beta_L(Z_0)\le 0$. 
 The second step implies that $Z_0$ is not $\T^3_{i,j}$-normalized.  Let $\widehat{W}$ be the maximal tail of $\T^3_{i,j}$ such that $(Z_0, \widehat{W})$ is terminal and not perfect. In particular, $Z_0$ does not contain $\widehat{W}^c$, and $\widehat{W}^c$ does not contain $Z^c_0$. Therefore, we have $Z_0\subsetneq \widehat{W}^c\cup Z_0\ne C$, and hence $\beta_L(\widehat{W}^c\cup Z_0)>0$, by the maximal property of $Z_0$. 
Notice that $C_1$ is contained in $\widehat{W}^c\wed Z_0$. 

 We claim that $\widehat{W}^c\wed Z_0$ is $\T^3_{i,j}$-normalized. Assume by contradiction that there is a tail $W'$ in $\T^3_{i,j}$ such that $(\widehat{W}^c\wed Z_0, W')$ is terminal and not perfect. Notice that
  $\widehat{W}$ is different form $W'$, because $(\widehat{W}^c\wed Z_0, \widehat{W})$ is perfect, and hence $(\widehat W,W')$ is not terminal. It follows that $(Z_0, W')$ is terminal. Moreover, by the definition of $\T^3_{i,j}$,  either $W'\subsetneq \widehat{W}$, or $\widehat{W}\subsetneq W'$. In the first case, we have $\widehat{W}^c\wed Z_0\subseteq \widehat{W}^c\subseteq (W')^c$, hence $(\widehat{W}^c\wed Z_0, W')$ is perfect, which is a contradicion. In the second case,  
 $(Z_0, W')$ is perfect, by the maximal property of $\widehat{W}$, then one of the following conditions holds
$$
W'\subseteq Z_0, \,\,\, W'\subseteq Z^c_0, \,\,\, Z_0\subseteq W', \,\,\, Z_0^c\subseteq W'.
$$
If $W'\subseteq Z_0$ (respectively $W'\subseteq Z^c_0$), then $\widehat{W}\subseteq Z_0$ (respectively $\widehat{W}\subseteq Z_0^c$), implying that $(Z_0,\widehat{W})$ is perfect, a contradiction. 
If $Z_0\subseteq W'$ (respectively $Z_0^c\subseteq W'$), then $\widehat{W}^c\wed Z_0\subseteq Z_0\subseteq W'$ (respectively $(W')^c\subseteq \widehat{W}^c\wed Z_0$), and hence $(\widehat{W}^c\wed Z_0, W')$ is perfect, again a contradiction. The proof of the claim is complete.

 Since $C_1$ is contained in $\widehat{W}^c\wed Z_0$ and $\widehat{W}^c\wed Z_0$ is $\T^3_{i,j}$-normalized, it follows from the second step that $\beta_L(\widehat{W}^c\wed Z_0)>0$. By \cite[Lemma 3]{E01} and recalling that $\beta_L(W)\in \ze+1/2\cdot\ze$, for every subcurve $W$ of $C$,  we have 
 $$\beta_L(Z_0)\ge \beta_L(\widehat{W}^c\cup Z_0)+\beta_L(\widehat{W}^c\wed Z_0)-\beta_L(\widehat{W}^c)\ge 1-\beta_L(\widehat{W}^c).$$ 
 By the definion of $L$ and since $k_{\widehat{W}^c}=3$, we have 
 \begin{equation}\label{betaWc}
 \beta_L(\widehat{W}^c)=\deg_L(\widehat{W}^c)+k_{\widehat{W}^c}/2=\deg_{\O_C(2P-Q-Q')}(\widehat{W}^c)-k_{\widehat{W}^c}/2=1/2,
 \end{equation}
and hence $\beta_L(Z_0)>0$, which is a contradiction.

 \smallskip 
 \emph{Fourth Step}.  Suppose that $L$ is $C_1$-quasistable at every $\T^3_{i,j}$-normalized tail of $C$. We want to show that $L$ is $C_1$-quasistable.
 
 Assume by contradiction that $L$ is not $C_1$-quasistable and let $Z_0\subsetneq C$ be a maximal proper subcurve among the subcurves of $C$ at which $L$ is not $C_1$-quasistable.  It follows from the third step that 
 $C_1$ is not contained in $Z_0$ and hence $\beta_L(Z_0)<0$. 
 Moreover, the second step implies that $Z_0$ is not  $\T^3_{i,j}$-normalized. Let $\widehat W$ be the maximal tail of $\T^3_{i,j}$ such that 
 $(Z_0, \widehat W)$ is terminal and not perfect. In particular, $\widehat{W}^c$ does not contain $Z_0^c$, and hence $\widehat{W}^c\cup Z_0$ is strictly contained in $C$. Since 
 $C_1$ is contained in $\widehat{W}^c\cup Z_0$, again by the third step we have $\beta_L(\widehat{W}^c\cup Z_0)\ge 1/2$. 
 It follows from (\ref{betaWc}) that $\beta_L(\widehat{W}^c)=1/2$. Since $k_{W_1\cup W_2}\le k_{W_1}+k_{W_2}$, for any subcurves $W_1$ and $W_2$ of $C$, we get 
 $$\beta_L(Z_0)\ge \beta_L(\widehat{W}^c\cup Z_0)-\beta_L(\widehat{W}^c)\ge0,$$ which is a contradiction.

\smallskip

\emph{Fifth Step}. Suppose that $L$ is $C_1$-quasistable at every $\T_{i,j}$-normalized tail of $C$. 
We want to show that $L$ is $C_1$-quasistable.  

Assume by contradiction that $L$ is not $C_1$-quasistable. It follows from the fourth step that there is a tail $Z$  of $C$  and  a tail $W$ in $\T^1_i\cup \T^1_j\cup \T^2_{i,j}$ such that 
$(W, Z)$ is terminal and not  perfect. If $(k_Z, k_W)=(1, 1)$, then $Z\in \{W, W^c\}$, hence $(W,Z)$ is perfect, a contradiction. If $(k_Z, k_W)\ne (1, 1)$, then using Lemma \ref{free-perf} we see that $k_Z\ge 2$ and $k_W=2$, and either $\term_W$ is contained in $\term_Z$, 
and hence $Z$ is equal to one of the two tails $W$ and $W^c$, or 
$\#(\term_Z\cap \term_W)=k_W-1$. In both cases $(W, Z)$ is perfect, which is a contradiction.
\end{proof}

We are ready to state and prove the main Theorem of the paper, containing a modular interpretation of the Abel--N\'eron map (\ref{A-N}).  
Recall that $f\: \C\ra B$ is a regular local smoothing of a nodal curve $C$ with irreducible components $C_1,\dots, C_p$, where $B$ is the spectrum of a Henselian DVR with residue field $k$. 
Keep the notation of Section \ref{intro-sec}. Let $\dot\C$ be the $B$-smooth locus of $\C$. The $B$-smooth locus of $\C^2=\C\times_B\C$ is  
 $\dot\C^2=\dot\C\times_B \dot\C$. 
 For every $\{i,j\}$ in $\{1,\dots,p\}^2$, set
\begin{equation}\label{Zij}
Z_{i,j}:=\sum_{W\in\T_{i,j}} W,
\end{equation}
where $\T_{i,j}$ is the set of nested tails of $C$ with respect to $(i,j)$, and 
$$\dot \C_{i,j}:=\dot\C^2\cap(C_i\times C_j).$$ 
An easy computation shows that $\dot\C^2\times_B\C$ is smooth. Since $\dot\C^2\times_B\C$ is smooth, it follows that $\dot\C_{i,j}\times Z_{i,j}$ is a Cartier divisor of $\dot\C^2\times_B\C$, for every $(i,j)$ in $\{1,\dots,p\}^2$. 
Consider the family of curves $\rho\: \dot{\C}^2\times_B\C\ra\dot\C^2$, where $\rho$ is the projection onto the first factor, and the invertible sheaf $\dot\L$ on $\dot{\C}^2\times_B\C/\dot\C^2$ defined as
\begin{equation}\label{Ldot}
\dot{\L}:=(\xi^*\I_{\Sigma|\C}^{-2}\otimes \I_{\Delta_1|\C^3}\otimes\I_{\Delta_2|\C^3})|_{\dot\C^2\times_B\C}\otimes\O_{\dot{\C}^2\times_B\C}\left(-\sum_{1\le i,j\le p}\dot\C^2_{i,j}\times Z_{i,j}\right).
\end{equation}

For every $(i,j)$ in $\{1,\dots,p\}^2$, recall the definition of $\O_{\T_{i,j}}$ introduced in (\ref{LDEF}).

\begin{Thm}\label{main1}
Let $C$ be a nodal curve $C$ defined over an algebraically closed field $k$, with irreducible components $C_1,\dots, C_p$. 
Let $f\: \C\ra B$ be a regular local smoothing of $C$, where $B$ is the spectrum of a Henselian DVR with residue field $k$.  Fix smooth points $P$, $Q$ and $Q'$ of $C$ contained respectively in $C_1$, $C_i$ and $C_j$, where $(i,j)$ is in $\{1,\dots,p\}^2$. If $\T_{i,j}$ is the set of nested tails of $C$ with respect to $(i,j)$, then the  invertible sheaf 
$$\O_C(2P-Q-Q')\otimes \O_{\T_{i,j}}$$
on $C$ is $C_1$-quasistable. In particular, let 
 $\sigma\:  B\ra \C$ be a section of $f$ through the $B$-smooth locus of $\C$ such that $\sigma(Spec(k))$ is contained in $C_1$ and set $\Sigma:=\sigma(B)$. If $\mathcal E$ is the canonical polarization on $\C/B$, then the Abel--N\'eron map $\alpha^2_{\I_{\Sigma|\C}^{-2}}\:  \dot{\C}^2\ra J^\sigma_{\mathcal E}$ is induced by the invertible sheaf $\dot\L$ on $\dot\C^2\times_B\C/\dot\C^2$ defined in (\ref{Ldot}).
\end{Thm}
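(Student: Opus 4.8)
## Proof Plan

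The plan is to split the theorem into its two assertions and reduce the second (the modular description of the Abel--N\'eron map) to the first (the $C_1$-quasistability statement), exactly as the structure of the paper suggests. By Proposition~\ref{quasistab}, to prove that $L:=\O_C(2P-Q-Q')\otimes\O_{\T_{i,j}}$ is $C_1$-quasistable it suffices to verify $C_1$-quasistability at every $\T_{i,j}$-normalized tail $Z$ of $C$. So the heart of the argument is a case analysis over such a tail $Z$, organized by the value of $k_Z$ and by which components among $C_1,C_i,C_j$ lie in $Z$ versus $Z^c$. For a fixed $\T_{i,j}$-normalized tail $Z$ one must compute $\deg_Z L_Z = \deg_Z\O_C(2P-Q-Q') - \#\{W\in\T_{i,j}: W \text{ is } Z\text{-terminal and the relevant intersection contributes}\}$; more precisely, the term $\O_\C(-\sum_{W\in\T_{i,j}}W)|_C$ contributes to $\deg_Z$ according to how each $W$ meets $Z$ along $\term_Z$, and because $Z$ is $\T_{i,j}$-normalized only the $Z$-terminal $W$ with $(Z,W)$ perfect can contribute, which makes the count tractable. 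The point-contributions of $\O_C(2P-Q-Q')$ are $+2$ if $C_1\subseteq Z$ (else $0$), $-1$ if $C_i\subseteq Z$ (else $0$), and $-1$ if $C_j\subseteq Z$ (else $0$). One then checks $|\deg_Z L_Z|\le k_Z/2$, and when $C_1\subseteq Z$ also $\beta_L(Z)>0$.

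The key structural input making the bookkeeping manageable is the battery of Lemmas~\ref{res1}--\ref{res7}: these are precisely the statements guaranteeing that for a $\T^2_{i,j}$-normalized (or more specifically $\T_{i,j}$-normalized) tail $Z$, one cannot have ``too many'' $Z$-terminal tails from $\T^2_{i,j}\cup\T^3_{i,j}$ crowding in on the same side, and that whenever some $W\supseteq Z$ is $Z$-terminal with a prescribed overlap $\#(\term_Z\cap\term_W)$, there is a compensating $Z$-terminal tail $W'\subseteq Z^c$ (Lemma~\ref{res5}, Lemma~\ref{res7}) or $Z\subseteq W'$ (Lemma~\ref{res6}). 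Concretely: after computing the naive degree bound, the borderline cases where $|\deg_Z L_Z|$ would exceed $k_Z/2$ or where $\beta_L(Z)$ would vanish are ruled out by invoking these lemmas to produce the missing tail whose contribution corrects the count. The definitions of $\T^1_i$ (nested 1-tails), $\T^2_{i,j}$ and $\T^3_{i,j}$ are engineered so that for each of $i$, $j$ separately the 1-tails account for the single points $Q$, $Q'$ (as in \cite[Lemma 4.9]{CE}), while the 2- and 3-tails account for the interaction between $Q$, $Q'$ and $P$; so a sub-case analysis according to whether $i=j$, whether $i=1$ or $j=1$, and whether $Q,Q'$ lie on the same or different components is also needed, though each such sub-case reduces to the degree-1 computation plus a controlled perturbation.

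Once $C_1$-quasistability of $\O_C(2P-Q-Q')\otimes\O_{\T_{i,j}}$ is established for all $(i,j)$, the second assertion follows formally. The restriction of $\dot\L$ (see \eqref{Ldot}) to the fiber of $\rho$ over a point $(x,y)\in\dot\C_{i,j}$ with $x\in C_i$, $y\in C_j$ smooth points is, by construction, $\O_C(-Q-Q')\otimes\O_\C(2P)\otimes\O_\C(-\sum_{W\in\T_{i,j}}W)|_C$ up to the ambiguity of the section $\Sigma$; since $\sigma(\Spec k)\in C_1$ this is $\O_C(2P-Q-Q')\otimes\O_{\T_{i,j}}$, which we have just shown is $C_1$-quasistable, hence $\sigma$-quasistable on the special fiber and (being invertible, automatically $\sigma$-quasistable) on the generic fiber. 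Thus $\dot\L$ is a family of $\sigma$-quasistable torsion-free rank $1$ sheaves on $\dot\C^2\times_B\C/\dot\C^2$, so by the fine-representability of $J^\sigma_{\mathcal E}$ (\cite[Theorems A and B]{E01}) it induces a $B$-morphism $\dot\C^2\to J^\sigma_{\mathcal E}$; on the generic fiber this morphism agrees with $\alpha^2_{\I_{\Sigma|\C}^{-2},K}$ by the very definition \eqref{AJ-generic}, and since $\dot\C^2$ is $B$-smooth and $J^\sigma_{\mathcal E}$ is separated over $B$, the N\'eron mapping property (Theorem~\ref{Ner}) forces it to coincide with the Abel--N\'eron map $\alpha^2_{\I_{\Sigma|\C}^{-2}}$.

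I expect the main obstacle to be the first assertion, and within it the tails $Z$ with $k_Z\in\{3,4\}$: these are exactly the ranges for which the auxiliary Lemmas~\ref{res1}--\ref{res7} were proved, so the difficulty is less in any single computation than in correctly enumerating which $Z$-terminal tails of $\T_{i,j}$ can and cannot occur simultaneously, and matching each borderline configuration to the lemma that excludes it or supplies the compensating tail. The reduction in Proposition~\ref{quasistab} to $\T_{i,j}$-normalized tails is what keeps this finite; the remaining work is the disciplined case check.
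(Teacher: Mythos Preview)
Your proposal is correct and follows essentially the same route as the paper: reduce the second assertion to the first via fine representability and separatedness, then invoke Proposition~\ref{quasistab} to reduce quasistability to $\T_{i,j}$-normalized tails, and finally carry out a case analysis on such a tail $Z$ according to $k_Z$ and the position of $C_1,C_i,C_j$, using Lemmas~\ref{res1}--\ref{res7} to bound or compensate the contributions of $Z$-terminal tails in $\T^2_{i,j}\cup\T^3_{i,j}$. The paper organizes the degree count via the quantities $t^+_Z$ and $t^-_Z$ and does not need the sub-case split on $i=j$ or $i=1$ that you mention, but otherwise your outline matches the actual proof.
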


\begin{proof}
Let $\C_K$ be the generic fiber of $f$.  We claim that the second statement of the theorem follows from the first one. Indeed, notice that   
$$\dot\L|_{\C_K\times_B \C_K\times_B\C}\simeq (\xi^*\I_{\Sigma|\C}^{-2}\otimes\I_{\Delta_1|\C^3}\otimes\I_{\Delta_2|\C^3})|_{\C_K\times_B \C_K\times_B\C}$$
and, for every  $(Q,Q')$ in $\dot\C_{i,j}$, we have 
$$
\dot\L|_{\rho^{-1}(Q,Q')}\simeq 
\O_C(2P-Q-Q')\otimes\O_{\T_{i,j}}
$$
where $\rho\: \dot{\C}^2\times_B\C\ra\dot\C^2$ is the projection onto the first factor and
 $P=\sigma(Spec(k))$. 
Therefore, if we prove the first statement of the theorem, then $\dot\L$ induces a morphism from $\dot\C^2$ to $J^\sigma_{\mathcal E}$ which is equal to $\alpha^2_{\I_{\Sigma|\C}^{-2}}$, because it coincides 
with $\alpha^2_{\I_{\Sigma|\C}^{-2}}$ over the open subset $\C_K\times_B \C_K$ of $\dot\C^2$ and  
$J^\sigma_{\mathcal E}$ is a separate scheme.

We prove now the first statement.
Let $(i,j)$ be in $\{1,\dots,p\}^2$ and let $P$, $Q$ and $Q'$ be smooth points of $C$ contained respectively in $C_1$, $C_i$ and $C_j$. We set $L:=\O_C(2P-Q-Q')\otimes\O_{\T_{i,j}}$. To show that $L$ is $C_1$-quasistable, 
we will use Lemma \ref{quasistab}. 
Let $Z$ be a  $\T_{i,j}$-normalized tail of $C$.  If $k_Z=1$ and $C_1\subseteq Z^c$, then $Z$ is in  $\T^1_i$ (respectively in $\T^1_j$) if and only if $C_i\subseteq Z$ (respectively $C_j\subseteq Z$), hence 
$\deg_Z L=0$ and $L$ is $C_1$-quasistable at $Z$. If $k_Z=1$ and $C_1\subseteq Z$, then $\deg_Z L=-\deg_{Z^c}L=0$,  and $L$ is $C_1$-quasistable at $Z$. 
 Therefore, we can assume $k_Z\ge 2$. Set 
$$t^+_Z:=\underset{W\subseteq Z  \text{ or } Z\subseteq W}{\sum_{W\in \T^2_{i,j}\cup\T^3_{i,j}}}\#(\term_Z\cap \term_W), \,\,\,\, t^-_Z:=\underset{W\subseteq Z^c  \text{ or } Z^c\subseteq W}{\sum_{W\in \T^2_{i,j}\cup\T^3_{i,j}}}\#(\term_Z\cap \term_W).$$
Since item (iii) of Lemma \ref{free-perf} implies that  $(Z, Z')$ is free, for every 1-tail $Z'$ of $C$, we have 
 $$\deg_Z L=\deg_Z \O_C(2P-Q-Q')+t^+_Z-t^-_Z.$$  

In the sequel, we will use several times the following observation following from the construction of the set $\T_{i,j}$: for each $s$ in $\{2,3\}$, there is at most one $Z$-terminal tail $W_1$ in $\T^s_{i,j}$ such that $W_1\subseteq Z$ and at most one $Z$-terminal tail $W_2$ in $\T^s_{i,j}$ such that $Z\subseteq W_2$. Similarly, there is at most one $Z$-terminal tail $W_3$ in $\T^s_{i,j}$ such that $W_3\subseteq Z^c$ and at most one $Z$-terminal tail $W_4$ in $\T^s_{i,j}$ such that $Z^c\subseteq W_4$.

 \smallskip
 
 \emph{Case 1}. Suppose that $C_i\cup C_j\subseteq Z$ and $C_1\subseteq Z^c$. 
 If $W$ is a $Z$-terminal tail in $\T^2_{i,j}\cup\T^3_{i,j}$, then either $W\subseteq Z$ or $Z\subseteq W$, because $Z$ is $\T_{i,j}$-normalized. In particular, we have $t^-_Z=0$, and $\deg_Z L=-2+t^+_Z$.
 
 If $k_Z$ is $2$ or $3$, then it follows from Corollary \ref{maxT2} and Corollary \ref{maxT3} that there is at least a $Z$-terminal tail contained in $\T^2_{i,j}\cup\T^3_{i,j}$. In particular, we have $1\le t^+_Z\le k_Z$, hence $-1\le \deg_Z L\le  -2+k_Z$, and in this way $-k_Z/2\le \deg_Z L<k_Z/2$.
 
 If $k_Z\ge 4$, then it follows from Lemma \ref{res1} and Lemma \ref{res2} that $t^+_Z\le 4$, where the equality holds if and only if one of the following conditions holds
 
 \begin{itemize}
\item[(i)]
there are $W,W'$ in $\T^3_{i,j}$ such that $W\subseteq Z\subseteq W'$ and 
$\#(\term_Z\cap \term_W)=\#(\term_Z\cap \term_{W'})=2$. 
\item[(ii)] there are $W,W'$ in $\T^3_{i,j}$ and $W''$ in $\T^2_{i,j}$ such that $W\subseteq Z\subseteq W'\wed W''$ and   $\#(\term_Z\cap \term_W)=2$,  $\#(\term_Z\cap \term_{W'})=\#(\term_Z\cap \term_{W''})=1$.  
 \end{itemize}

If $k_Z=4$, then Lemma \ref{res4} implies that (i) and (ii) do not hold, hence $t^+_Z\le 3$ and $-k_Z/2\le \deg_Z L<k_Z/2$. If  
$k_Z\ge 5$, then $|\deg_Z L|<k_Z/2$. 
 \smallskip

 \emph{Case 2}.  Suppose that $C_i\subseteq Z$ and $C_j\cup C_1\subseteq Z^c$. 
  If  $W$ is a $Z$-terminal tail in $\T^2_{i,j}\cup\T^3_{i,j}$, then $Z\subseteq W$, because
   $Z$ is $\T_{i,j}$-normalized. In particular, we have $t^-_Z=0$, and $\deg_Z L=-1+t^+_Z$. 
 
 If $k_Z$ is $2$ or $3$, then notice that $t^+_Z\le k_Z-1$. Indeed,  if $t^+_Z=k_Z$, then it would follow from Lemma \ref{res3} that there is a tail $W$ in $\T^2_{i,j}\cup\T^3_{i,j}$ such that 
 $Z\subseteq W$ and $\term_Z\subseteq \term_W$. Since $Z$ and $W$ are tails, we would get $\term_Z=\term_W$, and hence $Z=W$, which is a contradiction because $C_j\subseteq \ol{W\setminus Z}$. 
 Thus, if $k_Z$ is $2$ or $3$, then  $-1\le \deg_Z L\le -2+k_Z$, and hence $-k_Z/2\le \deg_Z L<k_Z/2$.  
 
 If $k_Z\ge 4$, then Lemma \ref{res2} implies that $t^+_Z\le 2$, and hence $|\deg_Z L|\le 1<k_Z/2$.
 
 \smallskip
 \emph{Case 3}. Suppose that $C_i\cup C_j\cup C_1\subseteq Z^c$. 
 If $W$ is a $Z$-terminal tail in $\T^2_{i,j}\cup\T^3_{i,j}$, then either $Z\subseteq W$ or $W\subseteq Z^c$, because $Z$ is $\T_{i,j}$-normalized, and hence $\deg_Z L=t^+_Z-t^-_Z$.

Assume  $k_Z=2$. We claim that $t^-_Z\le 1$. Indeed, if $t^-_Z=2$, we have two cases.  
 In the first case, there are tails $W$ in $\T^2_{i,j}$ and $W'$ in $\T^3_{i,j}$, such that $Z,W^c,(W')^c$ satisfy the hypotheses of Lemma 
 \ref{res3}, which is a contradiction. In the second case, there is a tail $W$ in $\T^2_{i,j}\cup \T^3_{i,j}$ such that $\term_Z\subseteq\term_W$ and $W\subseteq Z^c$, which implies that $W$ is equal to $Z^c$,  again a contradiction, because $C_1$ is contained in $\ol{Z^c\setminus W}$.  
 
We also claim that $t^+_Z\le 1$, and if the equality hold, then $t^-_Z=1$. Indeed, assume that   
$t^+_Z\ge 1$. We have two cases.  
 In the first case,  there is $W$ in $\T^2_{i,j}\cup\T^3_{i,j}$ such that $\#(\term_W\cap \term_Z)=1$ and $Z\subseteq W$. In this case, it follows from  Lemma \ref{res5} that $t^-_Z\ne 0$, and hence 
 $t^+_Z=t^-_Z=1$. In the second case, there is $W$ in $\T^2_{i,j}\cup\T^3_{i,j}$ such that $Z\subseteq W$ and $\term_Z\subseteq \term_W$, which implies that $Z$ is equal to $W$, again a contradiction because $C_i$ is contained in $\ol{W\setminus Z}$.
In this way,  if $k_Z=2$, then $\deg_Z L=t^+_Z-t^-_Z$, and it is $-1$ or $0$.

Assume  $k_Z=3$. We claim that $t^-_Z\le 2$, and if the equality holds, then $t^+_Z=1$. Indeed, if $t^-_Z=3$, then applying Lemma \ref{res1} to $Z^c$, we see that there is a tail $W$ in $\T^3_{i,j}$ such that 
$\term_Z=\term_W$ and $W\subseteq Z^c$, and hence $W$ is equal to $Z^c$, which is a contradiction because $C_1$ is contained in $\ol{Z^c\setminus W}$. Therefore, we have $t^-_Z\le 2$. Furthermore,  if $t^-_Z=2$, then using again Lemma \ref{res1} we get a tail 
$W$ in $\T^3_{i,j}$ such that $W\subseteq Z^c$ and $\#(\term_Z\cap \term_W)=2$. In this way, Lemma \ref{res6} implies the existence of a $Z$-terminal tail $W'$ in $\T^2_{i,j}$ such that $Z\subseteq W'$, and hence $t^+_Z=1$.

We also claim that $t^+_Z\le 2$, and if the equality holds, then $t^-_Z=1$. Indeed, if $t^+_Z=3$, then it follows from  Lemma \ref{res3} that there is $W$ in $\T^3_{i,j}$ such that $\term_Z=\term_W$ and $Z\subseteq W$, and hence $Z$ is equal to $W$, which is a contradiction because $C_i$ is contained in $\ol{W\setminus Z}$. 
 Furthermore, if $t^+_Z=2$, then we have two cases. In the first case, there is a tail 
$W$ in $\T^3_{i,j}$ such that $\#(\term_Z\cap \term_W)=2$ and $Z\subseteq W$, and hence the hypotheses of Lemma \ref{res5} are satisfied. In the second case, there are tails $W$ in $\T^2_{i,j}$ and $W'$ in $\T^3_{i,j}$  such that $\#(\term_Z\cap \term_W)=\#(\term_Z\cap \term_{W'})=1$ and $Z\subseteq W\wed W'$, and hence the hypotheses of Lemma \ref{res7} are satisfied. In any case, the cited Lemmas imply the existence of a $Z$-terminal tail $W''$ in $\T^2_{i,j}\cup\T^3_{i,j}$ such that $W''\subseteq Z^c$,  hence $t^-_Z=1$. Thus, if $k_Z=3$, then $|\deg_Z L|=|t^+_Z-t^-_Z|\le 1<k_Z/2$.

If $k_Z\ge 4$, it follows from  Lemmas \ref{res1} and \ref{res2} that $t^+_Z\le 2$ and $t^-_Z\le 2$, and hence $|\deg_Z L|=|t^+_Z-t^-_Z|\le 2$. To show that the condition $-k_Z/2\le \deg_Z L < k_Z/2$ holds, we only need to rule out the case 
$\deg_Z L=k_Z/2=2$. But, if $\deg_Z L=k_Z/2=2$, then $k_Z=4$, $t^+_Z=2$ and $t^-_Z=0$, and using Lemmas \ref{res5} and \ref{res7} as we did in the case $k_Z=3$, we would get $t^-_Z\ge 1$, which is a contradiction.
 
 In the remaining cases where $C_1$ is contained in $Z$, we can apply what we proved so far to $Z^c$, and we obtain  $-k_Z/2<\deg_Z L\le k_Z/2$. 
  \end{proof}
 
 \begin{Exa}\label{Exa}
 Let $\C\ra B$ be a regular local smoothing of a nodal curve $C$ as in the figure,  and fix smooth points $P$, $Q$ and $Q'$ of $C$ such that $P\in C_1$ and $\{Q, Q'\}\subset C_4$.
 \[
\begin{xy} <10pt,0pt>:
(0,0)*{\scriptstyle}="a"; 
"a"+(-9,0);"a"+(4,0)**\dir{-}; 
"a"+(3,1);"a"+(3,-4)**\dir{-}; 
"a"+(4,-1.5);"a"+(-3,-1.5)**\dir{-}; 
"a"+(-8,-1);"a"+(-6,-4)**\crv{(-6,4)}; 
"a"+(-5,-1);"a"+(-5,-3)**\crv{(-9,-3)}; 
"a"+(-2,-0.5);"a"+(-5,-3)**\crv{(-2,-3)}; 
"a"+(-0.2,-2.5);"a"+(1.7,-2.5)**\crv{(0.75,0.5)}; 
"a"+(4.7,0)*{C_1};
"a"+(4.7,-1.5)*{C_5};
"a"+(-8,-1.6)*{C_2};
"a"+(-4.3,-1)*{C_3};
"a"+(-0.2,-3.1)*{C_4};
"a"+(3.8,-3.5)*{C_6};
\end{xy}
\]
The curve $C$ has no 1-tails. The set of nested 2-tails (respectively 3-tails) of $C$ with respect to $(4, 4)$ is $\T^2_{4,4}=\{C_4, C_4\cup C_5\}$ (respectively $\T^3_{4, 4}=\{C_3\cup C_4\cup C_5\cup C_6\}$). Set $L:=\O_C(2P-Q-Q')\otimes \O_\C(-C_3-3C_4-2C_5-C_6)|_C$.  We have 
$$\deg_{C_1} L=\deg_{C_3}L=1, \,\, \deg_{C_2} L=-2, \,\, \deg_{C_4}L=\deg_{C_5}L=\deg_{C_6}=0,$$ 
  and hence $L$ is $C_1$-quasistable, as predicted by Theorem \ref{main1}.
 \end{Exa}


\section*{Acknowledgments}
\noindent
This paper is a continuation of a project started in collaboration with J. Coelho and E. Esteves.
I wish to thank them for several stimulating discussions.


\begin{thebibliography}{llll}

\bibitem[1]{AK} A. Altman, S. Kleiman, 
\emph{Compactifying the Picard scheme}, 
Adv. Math. {\bf 35} (1980) 50--112. 


\bibitem[2]{BLR} S. Bosch, W. L\"uktebohmert, M. Raynaud, \emph{N\'eron models}.  
Ergeb. der Mat. 21, Springer, 1980. 

\bibitem[3]{B} S. Busonero, \emph{CompactiÞed Picard schemes and Abel maps for singular curves}, Ph.D. thesis, Roma La Sapienza, 2008. 



\bibitem[4]{C} L. Caporaso, 
\emph{A compactification of the universal Picard variety 
over the moduli space of stable curves}. 
J. Amer. Math. Soc. {\bf 7} (1994) 589--660.


\bibitem[5]{CAJM} 
L. Caporaso, \emph{N\'eron models and compactified Picard schemes over the moduli
stack of stable surves}. Amer. J. Math. {\bf 130} (2008), 1--47.

\bibitem[6]{CE} L. Caporaso, E. Esteves, 
\emph{On Abel maps of stable curves}. 
Mich. Math. J. {\bfseries 55} (2007) 575--607.

\bibitem[7]{CCE} L. Caporaso, J. Coelho, E. Esteves, 
\emph{Abel maps of Gorenstein cuves}. 
Rend. del Circ. Mat. di Palermo {\bfseries  57} (2008), 
33--59.


\bibitem[8]{Co} J. Coelho,
\emph{Abel maps for reducible curves}. 
Doctor Thesis, IMPA, Rio de Janeiro, 2006.

\bibitem[9]{CEP} J. Coelho, E. Esteves, M. Pacini, \emph{Degree-2 Abel maps for nodal curves}. Preprint available at the page http://arxiv.org/abs/1212.1123


\bibitem[10]{CP} J. Coelho, M. Pacini,
\emph{Abel maps for curves of compact type}. 
J. Pure Appl. Algebra  214  (2010),  no. 8, 1319--1333.



\bibitem[11]{EH} D. Eisenbud, J. Harris, 
\emph{Limit linear series: Basic theory}. 
Invent. Math. {\bf 85} (1986) 337--371.


\bibitem[12]{E01} E. Esteves, 
\emph{Compactifying the relative Jacobian over 
families of reduced curves}. 
Trans. Amer. Math. Soc. {\bfseries 353} (2001), 3045--3095.

\bibitem[13]{EM} E. Esteves, N. Medeiros, \emph{Limit canonical systems on 
curves with two components}. Invent. Math. {\bf 149} (2002) 267--338.

\bibitem[14]{EO} E. Esteves, B. Osserman, 
\emph{Abel maps and limit linear series}. 
Preprint available at the page http://arxiv.org/abs/1102.3191, to appear in Rend. del Circ. Mat. di Palermo (special issue on algebraic geometry).

\bibitem[15]{GH} P. Griffiths and J. Harris,
\emph{On the variety of special linear systems on a general algebraic curve}.
Duke Math. J. {\bf 47} (1980), 233--272. 


\bibitem[16]{K} J. Kass, \emph{Degenerating the Jacobian: the N\'eron Model versus Stable Sheaves}. Preprint available at the page http://arxiv.org/abs/1012.2576, to appear in Algebra and Number Theory.


\bibitem[17]{MeVi} M. Melo, F. Viviani, \emph{Fine compactified Jacobians}. Mathem. Nachrichten  {\bf 8--9} (2012) 997--1031

\bibitem[18]{O} B. Osserman, \emph{A limit linear series moduli scheme}. 
Ann. de l'Inst. Four. {\bf 56} (2006) 1165--1205.

\bibitem[19]{P} M. Pacini, \emph{The resolution of the degree-2 Abel--Jacobi map for nodal curves-II}. In preparation.
\end{thebibliography}
\end{document}